\newcommand{\fd}{figures/}
\renewcommand{\d}[0]{\ensuremath{\operatorname{d}\!}} % For total derivatives
\newcommand{\im}{\mathrm{i}} 										  % imaginary unit
\newcommand{\wh}[1]{\widehat{#1}} 								  % short-cut for widehat symbol
\newcommand{\eFD}{\wh{e}_{\rm FD}}
\newcommand{\eRK}{\wh{e}_{\rm RK}}
\newcommand{\eRKF}{\wh{e}_{\rm RK}^{\rm \,(fine)}}
\newcommand{\eRKC}{\wh{e}_{\rm RK}^{\rm \,(coarse)}}
\DeclareFontFamily{U}{mathx}{\hyphenchar\font45}
\DeclareFontShape{U}{mathx}{m}{n}{
      <5> <6> <7> <8> <9> <10>
      <10.95> <12> <14.4> <17.28> <20.74> <24.88>
      mathx10
      }{}
\DeclareSymbolFont{mathx}{U}{mathx}{m}{n}
\DeclareMathAccent{\widecheck}{0}{mathx}{"71}
\DeclareMathAccent{\wideparen}{0}{mathx}{"75}
\newcommand{\ptxt}[1]{#1}
\definecolor{darkgreen}{rgb}{0, 0.5, 0}
\renewcommand\tableofcontents{%
\@starttoc{toc}%
}
\begin{document}

% The below commented out section adds a table of contents and resets the page count accordingly so that the TOC is page 0 rather than page 1.
%{
%\tableofcontents
%\ctxt{This PDF was compiled on {\today} at \currenttime.}}
%\setcounter{page}{0}
%\clearpage
%\setcounter{page}{1}
%\clearpage

\title{Efficient multigrid reduction-in-time for method-of-lines discretizations of linear advection\thanks{This work was performed under the auspices of the U.S. Department of Energy by Lawrence Livermore National Laboratory under Contract DE-AC52-07NA27344 (LLNL-JRNL-839789).
This work was supported in part by the U.S. Department of Energy, Office of Science, Office of Advanced Scientific Computing Research, Applied Mathematics program, and by NSERC of Canada.}
}
%\subtitle{Do you have a subtitle?\\ If so, write it here}
%\titlerunning{Short form of title}        % if too long for running head

\author{H. De Sterck  \and
        		R. D. Falgout \and
        		O. A. Krzysik \and 
        		J. B. Schroder
}

%\authorrunning{Short form of author list} % if too long for running head

%\institute{F. Author \at
%              first address \\
%              Tel.: +123-45-678910\\
%              Fax: +123-45-678910\\
%              \email{fauthor@example.com}           %  \\
%%             \emph{Present address:} of F. Author  %  if needed
%           \and
%           S. Author \at
%              second address
%}
\institute{H. De Sterck \at
				Department of Applied Mathematics, University of Waterloo, Waterloo, Ontario, Canada \\
				\email{hans.desterck@uwaterloo.ca} 
				\and
				R. D. Falgout \at 
				Center for Applied Scientific Computing, Lawrence Livermore National Laboratory, Livermore, California, USA \\
				\email{falgout2@llnl.gov}
				\and
				O. A. Krzysik \at 
				Department of Applied Mathematics, University of Waterloo, Waterloo, Ontario, Canada \\
				\email{okrzysik@uwaterloo.ca}
				\and
				J. B. Schroder \at 
				Department of Mathematics and Statistics, University of New Mexico, Albuquerque, New Mexico, USA \\
				\email{jbschroder@unm.edu}
}				

\date{Received: date / Accepted: date}
% The correct dates will be entered by the editor

\maketitle

\begin{abstract}
	Parallel-in-time methods for partial differential equations (PDEs) have been the subject of intense development over recent decades, particularly for diffusion-dominated problems.
	It has been widely reported in the literature, however, that many of these methods perform quite poorly for advection-dominated problems.
	Here we analyze the particular iterative parallel-in-time algorithm of multigrid reduction-in-time (MGRIT) for discretizations of constant-wave-speed linear advection problems.
	We focus on common method-of-lines discretizations that employ upwind finite differences in space and Runge-Kutta methods in time.
	Using a convergence framework we developed in previous work, we prove for a subclass of these discretizations that, if using the standard approach of rediscretizing the fine-grid problem on the coarse grid, robust MGRIT convergence with respect to CFL number and coarsening factor is not possible.
	This poor convergence and non-robustness is caused, at least in part, by an inadequate coarse-grid correction for smooth Fourier modes in space-time known as characteristic components.
	We propose an alternative coarse-grid operator that provides a better correction of these modes. This coarse-grid operator is related to previous work and uses a semi-Lagrangian discretization combined with an implicitly treated truncation error correction. 
	Theory and numerical experiments show the proposed coarse-grid operator yields fast MGRIT convergence for many of the method-of-lines discretizations considered, including for both implicit and explicit discretizations of high order.
	\ptxt{Parallel results demonstrate substantial speed-up over sequential time-stepping.}
\keywords{parallel-in-time \and MGRIT \and Parareal \and hyperbolic PDE \and advection equation \and multigrid}
%\keywords{First keyword \and Second keyword \and More}
% \PACS{PACS code1 \and PACS code2 \and more}
\subclass{65F10 \and 65M22 \and 65M55 \and 35L03}
\end{abstract}

% ---------------------------------------------------------------------------------------------- %
% ---------------------------------------------------------------------------------------------- %
%\clearpage
\section{Introduction}
\label{sec:introduction}

Research efforts in the field of parallel-in-time integration over recent decades have led to the development of methods that can solve discretized time-dependent partial differential equations (PDEs) in faster wall-clock times than the classical technique of sequential time-stepping.
This has been achieved for highly non-trivial problems and application areas including
nonlinear PDEs \cite{Bolten_etal_2020,Falgout_etal_2017_nonlin}, PDE-constrained optimization \cite{Goetschel_Minion_2019,Guenther_etal_2018}, time-fractional PDEs \cite{Gaspar_Rodrigo_2017}, powergrid simulation \cite{Schroder_etal_2018}, machine learning \cite{Guenther_etal_2020}, and option pricing \cite{Bal_Maday_2002}.

An area for which relatively little advancement has been made is the solution of hyperbolic PDEs, and that of advection-dominated problems more broadly.
In this paper, we focus specifically on the multilevel parallel-in-time method of multigrid reduction-in-time (MGRIT) \cite{Falgout_etal_2014}, with most of our results also applying to the closely related two-level method of Parareal \cite{Lions_etal_2001}.
It has been widely reported in the literature that these two popular iterative parallel-in-time methods exhibit poor convergence when applied to advection-dominated problems \cite{Chen_etal_2014,Dai_Maday_2013,DeSterck_etal_2021,Dobrev_etal_2017,Gander_Vandewalle_2007,Gander_2008,Gander_Lunet_2020,Hessenthaler_etal_2018,Howse_etal_2019,Howse2017_thesis,KrzysikThesis2021,Nielsen_etal_2018,Ruprecht_Krause_2012,Ruprecht_2018,Schmitt_etal_2018,Schroder_2018,Steiner_etal_2015}.
Recently in \cite{DeSterck_etal_2022_LFA}, we showed heuristically that, in general, when using the standard approach of \textit{directly} discretizing the PDE on the coarse grid, poor convergence for advection-dominated problems is due, at least in part, to an inadequate coarse-grid correction of certain smooth Fourier modes known as characteristic components.\footnote{\label{ftno:direct-disc} In this paper, we define a \textit{direct} coarse-grid operator as one that discretizes the underlying PDE without any further considerations; this contrasts with the \textit{modified} coarse-grid operators we also consider, which are a modification of a direct discretization aiming to match the so-called ideal coarse-grid operator with increased accuracy.
We define a \textit{rediscretized} coarse-grid operator as the direct coarse-grid operator using the same discretization as employed on the fine-grid.}
It has long been known that the same issue plagues the spatial multigrid solution of steady state advection-dominated problems \cite{Brandt_1981,Brandt_Yavneh_1993,Yavneh_1998}. 
In \cite{DeSterck_etal_2022}, inspired by ideas from \cite{Yavneh_1998} to address this issue in the spatial context, we developed MGRIT coarse-grid operators \ptxt{that were used to solve (fine-grid) semi-Lagrangian discretizations of variable-wave-speed linear advection problems}. Specifically, these coarse-grid operators are of \textit{modified} type, consisting of a semi-Lagrangian step followed by a carefully chosen, and implicitly treated, \ptxt{truncation-error-based} correction that takes into account the fine-grid discretization.

\ptxt{The primary contribution of this paper is the extension of the modified semi-Lagrangian coarse-grid operators from \cite{DeSterck_etal_2022} to enable the solution of (fine-grid) method-of-lines discretizations with MGRIT instead of the (fine-grid) semi-Lagrangian discretizations considered in \cite{DeSterck_etal_2022}. This extension is important because method-of-lines schemes are much more widely used than semi-Lagrangian schemes.
More specifically, we target the solution of constant-wave-speed linear advection problems discretized (on the fine grid) with explicit or implicit Runge-Kutta methods in time and finite differences in space.
In earlier work \cite{DeSterck_etal_2021}, we developed coarse operators for the same model problems as considered here that yielded fast and scalable MGRIT convergence.
However, the approach from \cite{DeSterck_etal_2021} used a heuristically motivated optimization problem to generate the stencil of the coarse-grid operator such that its eigenvalues would match closely (in a specific and important sense) those of the ideal coarse-grid operator. 
Furthermore, the approach used non-practical calculations to determine the sparsity pattern of the coarse operator, which, along with the specific optimization approach, was only possible due to the simple nature of the PDE.
The limitations of \cite{DeSterck_etal_2021} motivate the coarse-grid operator framework we develop in this paper, which has much stronger theoretical justifications, and, can, at least in principle, be used for more complicated PDEs (even though we do not consider them in this paper).
Besides those in \cite{DeSterck_etal_2021}, the coarse-grid operators we develop here are the first that lead to fast and scalable MGRIT convergence for method-of-lines discretizations of advection problems, and they are the first to do so in a cost-effective and generalizable way.
}

The idea of using a coarse-grid semi-Lagrangian discretization (without the modification we propose in this paper) with a fine-grid 
method-of-lines discretization was first proposed in \cite{Schmitt_etal_2018} for a nonlinear advection-diffusion PDE.
However, \cite{Schmitt_etal_2018} reported substantial deterioration in convergence in the hyperbolic limit of their problem, and performance was not reported with respect to several important discretization and solver parameters.
Importantly, a consequence of \cite[Thm. 6.4]{DeSterck_etal_2022_LFA} is that the combination of direct fine- and coarse-grid discretizations in \cite{Schmitt_etal_2018} (i.e., without modification) cannot produce robust convergence in general, because even for the simple case of an explicit first-order discretization of the constant-wave-speed linear advection problem, it can result in poor convergence.\footnote{\label{ftn:ERK1+U1-SL-equiv} It can be shown that the convergence factor of the method exceeds unity for almost all CFL numbers. 
Note \cite[Thm. 6.4]{DeSterck_etal_2022_LFA} assesses MGRIT convergence when using semi-Lagrangian discretizations on both grids; however, for CFL numbers less than unity, the first-order semi-Lagrangian discretization is equivalent to using first-order upwind finite differences in space and explicit Euler in time (see, e.g., \cite[p. 359]{Durran_2010}).} 
The remainder of this paper is organized as follows.
Preliminaries are given in Section~\ref{sec:preliminaries}, including a description of the model problem and an overview of MGRIT.
In Section~\ref{sec:SL}, a description of semi-Lagrangian methods is provided \ptxt{followed by further details about the modified coarse-grid operators from \cite{DeSterck_etal_2022} that are relevant to this work}.
Section~\ref{sec:MOL} considers the method-of-lines discretizations that are the focus of this paper and their discretization errors.
Section~\ref{sec:MOL-redisc-analysis} analyzes MGRIT convergence for method-of-lines discretizations when using direct coarse-grid operators.
New modified coarse-grid operators and associated numerical results are given in Section~\ref{sec:MOL-trunc-correction}.
Concluding remarks are given in Section~\ref{sec:conclusion}.

% ---------------------------------------------------------------------------------------------- %
% ---------------------------------------------------------------------------------------------- %
%\clearpage
\section{Preliminaries}
\label{sec:preliminaries}

% ---------------------------------------------------------------------------------------------- %
% ---------------------------------------------------------------------------------------------- %
\subsection{Model problem}
\label{sec:model-problem}

We consider the one-dimensional, constant-wave-speed linear advection problem,
\begin{align} \label{eq:ad}
\frac{\partial u}{\partial t} + \alpha \frac{\partial u}{\partial x} 
= 
0,
\quad
(x, t) \in (-1, 1) \times (0, T],
\quad
u(x, 0) = u_0(x),
\quad
\alpha > 0,
\end{align}
with $u$ subject to periodic boundary conditions in space. 
We consider both semi-Lagrangian and method-of-lines discretizations for this problem, which are described in more detail in Sections~\ref{sec:SL}~and~\ref{sec:MOL}, respectively.
Both discretizations use the same underlying space-time mesh. 
Specifically, the spatial domain is discretized with $n_x$ points equally separated by a distance of $h$: $\big\{ x_i = -1 + (i-1)h \colon i = 1, \ldots, n_x\big\}$.
We also use the notation ${\bm{x} = \big( x_1, \ldots, x_{n_x} \big)^\top \in \mathbb{R}^{n_x}}$.
The time domain is discretized using $n_t+1$ points equally separated by a distance of $\delta t$: $\big\{ t_n = n \delta t \colon n = 0, \ldots, n_t \big\}$.
On this space-time mesh, both of the aforementioned discretizations result in a fully discrete system of equations that takes the form
\begin{align} \label{eq:one-step-general}
\bm{u}_{n+1} = \Phi \bm{u}_n, \quad n = 0, \ldots, n_t - 1, \quad \bm{u}_0 = u_0(\bm{x}),
\end{align}
in which $(\bm{u}_n)_i \approx u(x_i, t_n)$, and the matrix $\Phi \in \mathbb{R}^{n_x \times n_x}$ is the \textit{time-stepping operator} responsible for propagating the spatial approximation forwards in time by an amount $\delta t$.

% ---------------------------------------------------------------------------------------------- %
% ---------------------------------------------------------------------------------------------- %
\subsection{Algorithmic description of MGRIT}
\label{sec:MGRIT}

MGRIT \cite{Falgout_etal_2014} is an iterative multigrid method for solving systems of equations of the form of \eqref{eq:one-step-general} in parallel over $n$.
A two-level MGRIT iteration combines fine-grid relaxation with a coarse-grid correction.

The fine grid is taken to be the set of time points underlying \eqref{eq:one-step-general}, $\big\{ t_n = n \delta t \colon n = 0, \ldots, n_t - 1 \big\}$.
A coarse grid is then induced from a coarsening factor $m \in \mathbb{N} \setminus \{1 \}$ by taking every $m$th time point from the fine grid, $\big\{ t_n = n m \delta t \colon n = 0, \ldots, \frac{n_t}{m} - 1 \big\}$, supposing for simplicity that $n_t$ is divisible by $m$.
The set of points appearing exclusively on the fine grid are denoted as ``F-points,'' and those shared between the fine and coarse grids are ``C-points.''
We dub the set of points consisting of a C-point and the $m-1$ F-points that follow it as a ``CF-interval.''
The fine-grid relaxation scheme acts in a block fashion, making use of so-called F- and C-relaxations, which set the algebraic residual to be zero at F- and C-points, respectively.
Specifically, the post-relaxation occurring after the coarse-grid correction is just an F-relaxation.
We write the pre-relaxation occurring before the coarse-grid correction as F(CF)$^{\nu}$ with $\nu \in \mathbb{N}_0$, indicating that it consists of a single F-relaxation, followed by $\nu$ sweeps of CF-relaxation---a C-relaxation followed immediately by an F-relaxation. 

The coarse-grid correction consists of solving the following system with a factor of $m$ fewer equations than the fine-grid problem \eqref{eq:one-step-general},
\begin{align} \label{eq:one-step-coarse-general}
\bm{e}_{n+1}^{( \Delta )} = \Psi \bm{e}_n^{(\Delta )} + \bm{r}_{n+1}^{(\Delta)}, \quad n = 0, \ldots, \frac{n_t}{m} - 1, \quad \bm{e}_0^{( \Delta )} = \bm{0},
\end{align}
\ptxt{where $(\Delta)$ superscripts denote coarse-grid variables.}
In the two-grid setting, problem \eqref{eq:one-step-coarse-general} is solved via sequential time-stepping, but in the multilevel context it is solved by recursively applying MGRIT, noting that it has the same structure as \eqref{eq:one-step-general}.
The system \eqref{eq:one-step-coarse-general} is an approximation to the C-point Schur complement of the residual equation of \eqref{eq:one-step-general}, with the approximation being characterized by the \textit{coarse-grid time-stepping operator} $\Psi \approx \Phi^m$ \cite{Dobrev_etal_2017,Southworth_etal_2021,DeSterck_etal_2022_LFA}.
In \eqref{eq:one-step-coarse-general}, $\bm{r}_{n}^{(\Delta)}$ is the residual at the $n$th C-point, and $\bm{e}_n^{(\Delta)}$ is an approximation to the associated algebraic error. 
If the so-called \textit{ideal coarse-grid operator} is used $\Psi = \Phi^m =: \Psi_{\rm ideal}$, then $\bm{e}_n^{(\Delta)}$ is exactly the C-point error, and MGRIT converges to the exact solution of \eqref{eq:one-step-general} in a single iteration.
However, the ability to obtain parallel speed-up with MGRIT over sequential time-stepping hinges on the solution of \eqref{eq:one-step-coarse-general} being cheaper to obtain than that of the fine-grid problem \eqref{eq:one-step-general}.
In particular, if $\Psi = \Psi_{\rm ideal}$ then (in general) no speed-up can occur since  solving the coarse-grid correction problem is just as expensive as solving the original fine-grid problem.
On the other hand, the convergence speed of MGRIT is characterized by the accuracy of the approximation $\Psi \approx \Psi_{\rm ideal}$  \cite{Dobrev_etal_2017,Southworth_etal_2021,DeSterck_etal_2022_LFA}.
The goal, therefore, is to strike a balance between the cost of applying $\Psi$ and the quality of the approximation it provides to $\Psi_{\rm ideal}$.

Most often in the PDE context, $\Psi$ is derived by rediscretizing the fine-grid problem; that is, using the same discretization with the enlarged coarse-grid time-step size $m \delta t$.
For diffusive problems, this tends to result in a quickly converging solver, but for advection-dominated problems, it does not.
The main objective of this paper is to develop an alternative to naive rediscretization, and, more generally, to any direct discretization (see Footnote \ref{ftno:direct-disc}), by carefully modifying the direct discretization, when the fine-grid operator $\Phi$ is a method-of-lines discretization of \eqref{eq:ad}.

% ---------------------------------------------------------------------------------------------- %
% ---------------------------------------------------------------------------------------------- %
%\clearpage
\section{Semi-Lagrangian discretizations and the modified coarse-grid operators from \cite{DeSterck_etal_2022}}
\label{sec:SL}

\ptxt{
Here we provide a brief description of semi-Lagrangian methods for \eqref{eq:ad} because the modified coarse-grid operators we propose require an understanding of these discretizations.
Following this, we give a simplified overview of the modified semi-Lagrangian coarse-grid coarse operators from \cite{DeSterck_etal_2022}.
}
%

% This figure was made with the tikz file: /Users/oliverkrzysik/Documents/projects/pit_advection/paper-MOL-generalization/figures/interp_const_1D_tikz/interp_const_1D.tex
\begin{figure}[t!]
\centerline{
\includegraphics[scale=0.85]{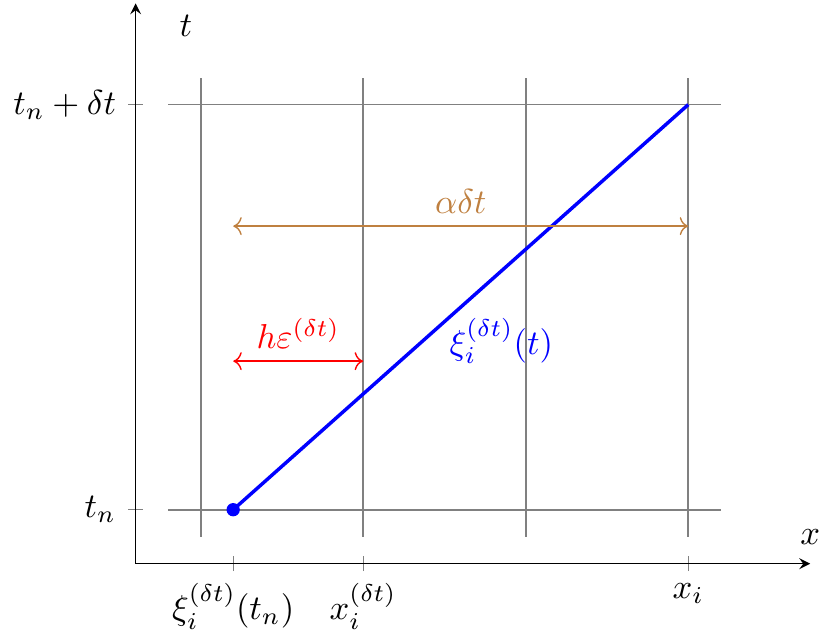}
}
\caption{
A local characteristic $\xi_{i}^{(\delta t)}(t)$ for $t \in [t_n, t_{n+1}]$ of the advection problem \eqref{eq:ad}.
By definition, the characteristic passes through the arrival point $(x,t)=(x_i, t_{n+1})$. 
The departure point (or foot) of the characteristic is its location at time $t = t_n$.
The departure point is decomposed into the sum of its east-neighboring mesh point $x_i^{(\delta t)}$ and its distance $h \varepsilon^{(\delta t)}$ from this point.
\label{fig:departure_diagram}
}
\end{figure}

We denote the time-stepping operator of a semi-Lagrangian discretization of \eqref{eq:ad} as $\Phi = {\cal S}_{p}^{(\delta t)} \in \mathbb{R}^{n_x \times n_x}$, indicating that it uses a time-step size of $\delta t$ and has a (global) order of accuracy of $p$.
To advance $\bm{u}_n$ to $\bm{u}_{n+1}$, the semi-Lagrangian method uses the fact that the solution of \eqref{eq:ad} along characteristics is constant.
We define a local characteristic of \eqref{eq:ad} that arrives at the mesh point $(x,t) = (x_i, t_n + \delta t)$, and departs from time $t = t_n$, as the curve $(x, t) = \big( \xi^{(\delta t)}_i(t), t \big)$; see Fig.~\ref{fig:departure_diagram}.

For every such local characteristic, we locate the departure point $(x,t) = \big( \xi_i^{(\delta t)}(t_n), t_n \big)$, which is easily computed as $\xi_i^{(\delta t)}(t_n) = x_i - \alpha \delta t \equiv x_i^{(\delta t)} - h \varepsilon^{(\delta t)}$.
Here, $x_i^{(\delta t)}$ denotes the mesh point immediately to the east of $\xi_i^{(\delta t)}(t_n)$, and $\varepsilon^{(\delta t)} \in [0,1)$ is the mesh-normalized distance between $\xi_i^{(\delta t)}(t_n)$ and $x_i^{(\delta t)}$.
In general, $\xi_i^{(\delta t)}(t_n)$ does not coincide with a mesh point, so the solution at this point is approximated by fitting an interpolating polynomial of degree at most $p$ through the known solution $\bm{u}_n$ at the mesh points nearest to the departure point.
The $i$th component of $\bm{u}_{n+1}$ is then given by this interpolant of $\bm{u}_n$.
Unlike its explicit method-of-lines counterparts (described in the next section), this discretization is unconditionally stable with respect to $\delta t$, i.e., $\big\Vert {\cal S}_{p}^{(\delta t)} \big\Vert_2 \leq 1$. 
See the textbooks \cite{Durran_2010,Falcone_Ferretti_2014} for a more detailed description of semi-Lagrangian methods.

\ptxt{
In \cite{DeSterck_etal_2022} we used MGRIT to solve advection problems discretized (on the fine grid) with semi-Lagrangian methods.
We showed numerically that rediscretizing a semi-Lagrangian method on a coarse grid results in extremely poor MGRIT convergence.
To overcome this, we proposed instead to use} a modified coarse-grid operator that consists of a rediscretized semi-Lagrangian step, followed by a correction step. 
The correction was carefully designed so that the dominant term in the truncation error of the modified operator matches that of the ideal coarse-grid operator.\footnote{\ptxt{For constant-wave-speed advection problems, dominant truncation errors are matched exactly, while for variable-wave-speed  problems they are matched in an approximate way.}} 
This solution was based on the idea of matching truncation errors originally proposed in \cite{Yavneh_1998} for spatial multigrid methods, and is motivated further by our analysis in \cite[Sec. 6]{DeSterck_etal_2022_LFA}.

\ptxt{
We now provide some further details on the construction of the modified coarse-grid operator from \cite{DeSterck_etal_2022} to help contextualize and motivate the coarse-grid operators for method-of-lines schemes that we develop later in Section \ref{sec:MOL-trunc-correction}.
In order to carry out the above described truncation error matching, one requires error estimates of both the ideal coarse-grid operator, which, in the case of \cite{DeSterck_etal_2022}, is $m$ steps of the fine-grid semi-Lagrangian discretization ${\cal S}_{p}^{(\delta t)}$, and the coarse-grid semi-Lagrangian discretization ${\cal S}_{p}^{(m \delta t)}$.
Both of these estimates can be derived from the following error estimate for ${\cal S}_p^{(\delta t)}$, which is a simplified version of \cite[Lem. 3.1]{DeSterck_etal_2022}. 
}
\begin{lemma}[Semi-Lagrangian truncation error]
\label{lem:SL_trunc}
Define $\bm{u}(t) \in \mathbb{R}^{n_x}$ as the vector composed of the exact solution to PDE \eqref{eq:ad} sampled on the spatial mesh at time $t$. 
Let \ptxt{the difference matrix ${\cal D}^{(p+1)}_s \in \mathbb{R}^{n_x \times n_x}$ be as described below.} 
Then, for sufficiently smooth solutions of the PDE \eqref{eq:ad}, the local truncation error of the above described semi-Lagrangian discretization can be expressed as
\begin{align} \label{eq:SL_trunc}
\bm{u}(t_{n+1}) - {\cal S}_{p}^{(\delta t)} \bm{u}(t_{n})
=
(-h)^{p+1} 
f_{p+1} \big(\varepsilon^{(\delta t)} \big) 
\frac{{\cal D}^{(p+1)}_s}{h^{p+1}}
\bm{u}(t_{n+1})
+ {\cal O}(h^{p+2}),
\end{align}
where the function $f_{p+1}$ is the following degree $p+1$ polynomial:
\begin{align} \label{eq:fpoly_def}
f_{p+1}(z) := \frac{1}{(p+1)!} \prod \limits_{j = - \ell(p)}^{r(p)} (j+z).
\end{align}
\end{lemma}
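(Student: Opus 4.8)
The plan is to exploit the defining property of the semi-Lagrangian scheme: since the exact solution of \eqref{eq:ad} is constant along characteristics, we have the exact identity $u(x_i, t_{n+1}) = u\big(\xi_i^{(\delta t)}(t_n), t_n\big)$ at every arrival point. Consequently, the entire local truncation error of ${\cal S}_p^{(\delta t)}$ comes from a single source: the error incurred when $u\big(\xi_i^{(\delta t)}(t_n), t_n\big)$ is replaced by the degree-at-most-$p$ polynomial interpolant of the grid data $\bm{u}(t_n)$ evaluated at the departure point. First I would therefore reduce the claim to a statement about the error of spatial polynomial interpolation of the smooth function $x \mapsto u(x, t_n)$ at the off-grid point $\xi_i^{(\delta t)}(t_n)$.

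Next I would apply the classical Lagrange interpolation remainder. The interpolation stencil consists of the $p+1$ mesh points $x_i^{(\delta t)} + j h$ for $j = -\ell(p), \ldots, r(p)$, with $\ell(p) + r(p) = p$. Using the departure-point decomposition $\xi_i^{(\delta t)}(t_n) = x_i^{(\delta t)} - h \varepsilon^{(\delta t)}$, each nodal factor becomes $\xi_i^{(\delta t)}(t_n) - \big(x_i^{(\delta t)} + j h\big) = -h\big(\varepsilon^{(\delta t)} + j\big)$, so the nodal polynomial evaluates to $(-h)^{p+1} \prod_{j=-\ell(p)}^{r(p)} \big(\varepsilon^{(\delta t)} + j\big) = (-h)^{p+1} (p+1)!\, f_{p+1}\big(\varepsilon^{(\delta t)}\big)$ by \eqref{eq:fpoly_def}. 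The remainder formula then gives, at node $i$, an interpolation error of $(-h)^{p+1} f_{p+1}\big(\varepsilon^{(\delta t)}\big)\, \partial_x^{p+1} u(\eta_i, t_n)$ for some $\eta_i$ in the stencil interval, which already isolates the prefactor $(-h)^{p+1} f_{p+1}\big(\varepsilon^{(\delta t)}\big)$ appearing in \eqref{eq:SL_trunc}.

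It remains to rewrite the pointwise derivative $\partial_x^{p+1} u(\eta_i, t_n)$ in terms of the difference matrix ${\cal D}^{(p+1)}_s$ acting on the arrival-time data $\bm{u}(t_{n+1})$. Here I would use two observations, each contributing only ${\cal O}(h)$ and hence, against the ${\cal O}(h^{p+1})$ prefactor, only ${\cal O}(h^{p+2})$ to the residual. First, differentiating the characteristic identity gives $\partial_x^{p+1} u\big(\xi_i^{(\delta t)}(t_n), t_n\big) = \partial_x^{p+1} u(x_i, t_{n+1})$ exactly, since $\xi_i^{(\delta t)}(t_n) + \alpha \delta t = x_i$; replacing $\eta_i$ by $\xi_i^{(\delta t)}(t_n)$, which differ by ${\cal O}(h)$, then costs only ${\cal O}(h)$ by smoothness of the solution. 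Second, by consistency of ${\cal D}^{(p+1)}_s$ as a $(p+1)$st-order difference operator, $\big(\frac{{\cal D}^{(p+1)}_s}{h^{p+1}} \bm{u}(t_{n+1})\big)_i = \partial_x^{p+1} u(x_i, t_{n+1}) + {\cal O}(h)$. Collecting the componentwise estimates into the vector identity yields \eqref{eq:SL_trunc}.

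The main obstacle I anticipate is not any single step but the uniform control of the two ${\cal O}(h)$ corrections together with verifying that the leading coefficient is reproduced exactly. This requires that $\varepsilon^{(\delta t)}$ stay bounded, which it does since $\varepsilon^{(\delta t)} \in [0,1)$, and that the derivatives of $u$ up to order $p+2$ be bounded uniformly on the compact time interval, i.e., the ``sufficiently smooth'' hypothesis. I would also need to confirm the normalization implicit in the definition of ${\cal D}^{(p+1)}_s$ given below the statement, namely that $\frac{{\cal D}^{(p+1)}_s}{h^{p+1}}$ is consistent with $\partial_x^{p+1}$ with leading coefficient one and no spurious constant, so that the prefactor $(-h)^{p+1} f_{p+1}\big(\varepsilon^{(\delta t)}\big)$ in \eqref{eq:SL_trunc} carries through unmodified.
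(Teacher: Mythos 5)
Your proposal is correct and follows essentially the same route as the paper's proof (which this paper defers to \cite[Lem.~3.1]{DeSterck_etal_2022} and which mirrors the proof given here for Lemma~\ref{lem:FD_trunc}): reduce the truncation error to the Lagrange interpolation remainder at the departure point via constancy along characteristics, identify the nodal polynomial with $(-h)^{p+1}(p+1)!\,f_{p+1}\big(\varepsilon^{(\delta t)}\big)$, and absorb the Taylor shift of the evaluation point and the ${\cal O}(h^s)$ consistency error of ${\cal D}^{(p+1)}_s$ into the ${\cal O}(h^{p+2})$ term. Your observation that the $(p+1)$st derivative is transported \emph{exactly} along characteristics for constant wave speed is precisely the simplification that distinguishes this lemma from the variable-coefficient version in \cite{DeSterck_etal_2022}, where that step is only approximate.
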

In \eqref{eq:fpoly_def}, $\ell(p), r(p) \in \mathbb{N}_0$ are such that, for a given $p$, the $p+1$ mesh points $\big\{ x_i^{(\delta t)} + j h \big\}_{j = - \ell(p)}^{r(p)}$ in the interpolation stencil are those closest to the departure point $\xi_i^{(\delta t)}(t_n)$.
Furthermore, the matrix 
\ptxt{
${\cal D}^{(p+1)}_s \in \mathbb{R}^{n_x \times n_x}$ in \eqref{eq:SL_trunc}
}
is defined such that $h^{-(p+1)}{\cal D}^{(p+1)}_s$ represents an order $s \in \mathbb{N}$ accurate finite-difference approximation of the $p+1$st derivative of a periodic grid function.
Let $\bm{v} = \big(v(x_1), \ldots, v(x_{n_x}) \big)^\top \in \mathbb{R}^{n_x}$ denote a vector representing a periodic function $v(x)$ evaluated on the spatial mesh. 
Then, if $v$ is at least $p+1+s$ times continuously differentiable,
\begin{align} \label{eq:Dp+1_def}
\left(
\frac{{\cal D}^{(p+1)}_s}{h^{p+1}} \bm{v} 
\right)_i = \left. \frac{\d^{p+1} v}{\d x^{p+1}} \right|_{x_i} + {\cal O}(h^s), 
\quad i \in \{1, \ldots, n_x \}.
\end{align}
Since the mesh points are equispaced, the matrix ${\cal D}^{(p+1)}_s$ is circulant.
Finally, note that while the entries of ${\cal D}^{(p+1)}_s$ are independent of $h$, its action is not: ${\cal D}^{(p+1)}_s \bm{v} = {\cal O}(h^{p+1})$ if $\bm{v}$ is independent of $h$.

\ptxt{With error estimates of the ideal coarse-grid operator and the coarse-grid semi-Lagrangian discretization ${\cal S}_{p}^{(m \delta t)}$, one can develop the following coarse-grid operator \cite[(3.20)]{DeSterck_etal_2022}}
\begin{align} \label{eq:SL-redisc-corrected}
\Psi 
= 
\Big( I - \gamma_{p+1}^{(\rm trunc)} {\cal D}^{(p+1)}_s \Big)^{-1} {\cal S}_p^{(m \delta t)}
\approx
\Psi_{\rm ideal}
=
\prod_{k = 0}^{m-1}
{\cal S}_p^{(\delta t)},
\end{align}
\ptxt{which has the property that the dominant term in its truncation error matches that of the ideal coarse-grid operator.
Specifically, the $\gamma_{p+1}^{(\rm trunc)}$ term captures} the difference between the constants in the leading-order truncation errors of ${\cal S}_p^{(m \delta t)}$ and $\Psi_{\rm ideal}$.
Note that the correction in \eqref{eq:SL-redisc-corrected} is done implicitly, \ptxt{i.e., it requires the inversion of a sparse matrix, for numerical stability reasons}.

\ptxt{The primary contribution of this paper is the generalization of the modified coarse-grid operator $\Psi$ in \eqref{eq:SL-redisc-corrected} so that it can be used in conjunction with fine-grid method-of-lines discretizations and not only the fine-grid semi-Lagrangian discretizations of \cite{DeSterck_etal_2022}. The generalized coarse-grid operators developed here have the same structure as \eqref{eq:SL-redisc-corrected}, in that they consist of a semi-Lagrangian step followed by a correction; however, the correction is different than in \eqref{eq:SL-redisc-corrected} because it takes into account the truncation error of the underlying fine-grid method-of-lines scheme.}

% ---------------------------------------------------------------------------------------------- %
% ---------------------------------------------------------------------------------------------- %
%\clearpage
\section{Method-of-lines discretization}
\label{sec:MOL}

In this section, we outline the method-of-lines discretizations that we consider for the model problem in Section~\ref{sec:model-problem}.
\ptxt{The (fine-grid) discretizations we consider in this work are the same as those we considered previously in \cite{DeSterck_etal_2021}}, and the reader can find in \cite{DeSterck_etal_2021} specific details of these discretizations that are omitted here, \ptxt{such as finite-difference formulae and Butcher tableaux}.\footnote{With the exception of the fifth-order singly diagonally implicit Runge-Kutta method we consider here but did not in \cite{DeSterck_etal_2021}. The Butcher tableau for this method can be found in \cite[Tab. 24]{Kennedy_Carpenter2016}.}
This section also develops truncation error estimates for the discretizations, which are used in our MGRIT convergence analysis later in the paper, and for developing our new coarse-grid operators.
While developing such error estimates for constant-coefficient PDEs is in some sense a textbook exercise, we are unaware of any references that provide exactly the estimates we require. 
We do remark, however, that our analysis has similarities with the Fourier-based error analysis in \cite{Gander_Lunet_2020}.
%

% ---------------------------------------------------------------------------------------------- %
% ---------------------------------------------------------------------------------------------- %
\subsection{Finite-difference spatial discretization}
\label{sec:FD}

\ptxt{
We define the circulant matrix ${\cal L}_p \in \mathbb{R}^{n_x \times n_x}$ such that $\frac{{\cal L}_p}{h}$ represents a $p$th-order accurate finite-difference approximation to the first derivative on a periodic grid.
} 
The finite-difference rule that ${\cal L}_p$ encodes is given by differentiating a degree $p$ interpolating polynomial that is fit through $p+1$ contiguous mesh points \cite{Fornberg_1988,Shu_1998}.
The mesh points involved in the finite-difference stencil to approximate $\frac{\d}{\d x} |_{x_i}$ are $\{ x_i + j h \}_{j = - \ell(p)}^{r(p)}$ (with $\ell(p)$ and $r(p)$ not necessarily the same functions as those used in the semi-Lagrangian interpolation stencil from Section~\ref{sec:SL}).
We consider upwind-finite-difference rules which means the stencils have a bias to the left of $x_i$ since $\alpha > 0$ in \eqref{eq:ad}.\footnote{For even $p$, central stencils are typically more accurate than upwind stencils (see, e.g., \cite{Gander_Lunet_2020}). 
However, we use upwind stencils since they possess better stability properties when paired with explicit Runge-Kutta time integrators, and because we find that the resulting space-time discretizations are more favourable from an MGRIT convergence standpoint.}
For odd $p \geq 1$ the stencils have a one-point bias, $\ell(p) = \frac{p+1}{2}$, $r(p) = \ell(p)-1$; for even $p \geq 2$ the stencils have a two-point bias, $\ell(p) = \frac{p}{2}+1$, $r(p) = \ell(p)-2$.
We abbreviate ``$p$th-order upwind discretization'' as ``U$p$.''

The below lemma is a slight variation of a well-known error estimate for ${\cal L}_p$. 
\begin{lemma}[Finite-difference error] \label{lem:FD_trunc}
Let ${\cal D}^{(p+1)}_s \in \mathbb{R}^{n_x \times n_x}$ be an $s$th-order approximation to the $p+1$st derivative of a periodic grid function, as in \eqref{eq:Dp+1_def}. 
Let $v(x)$ be a periodic function that is at least $p+1+s$ times differentiable, and let $\bm{v} = \big(v(x_1), \ldots, v(x_{n_x}) \big)^\top \in \mathbb{R}^{n_x}$ denote the vector composed of $v$ evaluated on the spatial mesh.
Then, the error of the above described finite-difference spatial discretization $\frac{{\cal L}_p}{h}$ satisfies
\begin{align} \label{eq:L_FD_trunc}
\left. \frac{\d v}{\d x} \right|_{x_i}  
- 
\left. \left( \frac{{\cal L}_p}{h} \bm{v} \right) \right|_{i}
= 
h^{p}
\wh{e}_{\rm FD}  
\left( \frac{{\cal D}^{(p+1)}_s}{h^{p+1}} \bm{v} \right)_{i} + {\cal O}(h^{p+1}),
\quad
i \in \{1, \ldots, n_x \},
\end{align}
with constant
\begin{align} \label{eq:e_FD_def}
\wh{e}_{\rm FD} 
:= 
%\frac{(-1)^p}{(p+1)!} 
%\prod \limits_{\substack{j = -\ell(p) \\  j \neq 0}}^{r(p)} j.
% Note this version follows from the one written in the proof because the terms in the product in the proof are (-1)^p * (-1)^\ell * \ell! r!, and (-1)^{p+\ell} = (-1)^{2\ell + r} = (-1)^r.
(-1)^{r(p)} \frac{\ell(p)! r(p)!}{(p+1)!}.
\end{align}
\end{lemma}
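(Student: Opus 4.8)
The plan is to derive \eqref{eq:L_FD_trunc} directly from the classical Lagrange interpolation remainder. By construction, $\big(\frac{{\cal L}_p}{h}\bm{v}\big)_i = P'(x_i)$, where $P$ is the degree $p$ polynomial interpolating $v$ at the stencil nodes $\{x_i + jh\}_{j=-\ell(p)}^{r(p)}$. First I would write the interpolation error in remainder form,
\[
v(x) - P(x) = \frac{v^{(p+1)}(\xi(x))}{(p+1)!}\,\omega(x), \qquad \omega(x) := \prod_{j=-\ell(p)}^{r(p)} \big(x - x_i - jh\big),
\]
with $\xi(x)$ in the convex hull of the stencil. The crucial structural observation is that the $j=0$ node equals $x_i$, so $\omega$ carries the factor $(x - x_i)$ and hence $\omega(x_i) = 0$. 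Differentiating the remainder and evaluating at $x = x_i$, every term retaining an undifferentiated factor of $\omega$ therefore vanishes; in particular the term involving the $x$-dependence of $\xi$ disappears, leaving the clean identity $\frac{\d v}{\d x}\big|_{x_i} - \big(\frac{{\cal L}_p}{h}\bm{v}\big)_i = \frac{v^{(p+1)}(\xi)}{(p+1)!}\,\omega'(x_i)$.

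Next I would evaluate $\omega'(x_i)$. Since $\omega(x_i) = 0$, only the summand in which the $(x-x_i)$ factor is differentiated survives, giving $\omega'(x_i) = \prod_{j \neq 0}(-jh) = (-h)^p \prod_{j\neq 0} j$. Splitting the remaining product into its positive part, $\prod_{j=1}^{r(p)} j = r(p)!$, and its negative part, $\prod_{j=1}^{\ell(p)}(-j) = (-1)^{\ell(p)}\ell(p)!$, yields $\omega'(x_i) = (-1)^{p+\ell(p)}\,h^p\,\ell(p)!\,r(p)!$. I would then simplify the sign using the identity $\ell(p) + r(p) = p$, which holds because the stencil contains $p+1$ nodes; this gives $(-1)^{p+\ell(p)} = (-1)^{2\ell(p)+r(p)} = (-1)^{r(p)}$, so the coefficient collapses exactly to the constant $\eFD$ of \eqref{eq:e_FD_def}.

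Finally I would replace $v^{(p+1)}(\xi)$ by $v^{(p+1)}(x_i)$, at the cost of an ${\cal O}(h)$ term (since $\xi$ lies within ${\cal O}(h)$ of $x_i$ and $v$ is smooth enough), which is absorbed into the ${\cal O}(h^{p+1})$ remainder. Substituting $v^{(p+1)}(x_i) = \big(\frac{{\cal D}^{(p+1)}_s}{h^{p+1}}\bm{v}\big)_i + {\cal O}(h^s)$ from \eqref{eq:Dp+1_def} and noting $s \geq 1$, the induced error $h^p \eFD\,{\cal O}(h^s)$ is likewise ${\cal O}(h^{p+1})$, which completes \eqref{eq:L_FD_trunc}. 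The only genuinely delicate step is justifying that differentiating the remainder at $x_i$ annihilates the contribution of $\xi(x)$; the cleanest justification is the vanishing of $\omega(x_i)$ noted above. If one prefers to avoid differentiating $\xi(x)$ entirely, an equivalent route is to Taylor-expand the finite-difference weights $c_j$, identify $\eFD = -\frac{1}{(p+1)!}\sum_j c_j\, j^{p+1}$ from the leading surviving term, and evaluate this sum on the monomial $(x-x_i)^{p+1}/(p+1)!$, for which the interpolation error is exact and reproduces the same product formula for $\omega'(x_i)$.
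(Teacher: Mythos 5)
Your proof is correct and follows essentially the same route as the paper: both differentiate the Lagrange interpolation remainder at the node $x_i$, Taylor-shift $\xi$ to $x_i$ at ${\cal O}(h)$ cost, and then substitute the ${\cal D}^{(p+1)}_s$ approximation, with your observation that $\omega(x_i)=0$ kills the $\xi(x)$-dependence being exactly the step the paper outsources to the cited references (Davis; Hesthaven, pp.~165--166). You go slightly further than the paper's proof by explicitly computing $\omega'(x_i) = (-1)^{r(p)}h^p\,\ell(p)!\,r(p)!$ and verifying the sign via $\ell(p)+r(p)=p$, thereby deriving the constant $\eFD$ in \eqref{eq:e_FD_def} rather than taking it as given.
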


\begin{proof}
Consider interpolating $v(x)$ at the $p+1$ mesh points $\{ x_i + j h \}_{j = - \ell(p)}^{r(p)}$ using a polynomial of degree at most $p$.
Since $v$ is sufficiently smooth, the standard error estimate from polynomial interpolation theory (see, e.g., \cite[Thm. 3.1.1]{Davis_1975}) can be used to quantify the difference between $v$ and this polynomial for $x \in (x_i - \ell h, x_i + r h)$.
It is not difficult to show (see, e.g., \cite[pp. 165--166]{Hesthaven_2017}) that 
differentiating this estimate and evaluating the result at $x = x_i$ gives
\begin{align} \label{eq:L_FD_trunc_proof_aux}
\left. \frac{\d v}{\d x} \right|_{x_i} 
- 
\left. \left( \frac{{\cal L}_p}{h} \bm{v} \right) \right|_{i}
= 
h^p \wh{e}_{\rm FD} 
\left. \frac{\d^{p+1} v}{\d x^{p+1}} \right|_{\xi_i},
\end{align}
with the point $\xi_i \in ( x_i - \ell h, x_i + r h )$ not known.
Since $\xi_i$ and $x_i$ are a distance of ${\cal O}(h)$ apart, by Taylor expansion, we have 
$
\left. \frac{\d^{p+1} v}{\d x^{p+1}} \right|_{\xi_i}
=
\left. \frac{\d^{p+1} v}{\d x^{p+1}} \right|_{x_i}
+
{\cal O}(h)
$.
Plugging this into \eqref{eq:L_FD_trunc_proof_aux} and replacing the $p+1$st derivative of $v$ with an approximation using ${\cal D}^{(p+1)}_s$ from \eqref{eq:Dp+1_def} gives the claim \eqref{eq:L_FD_trunc}. 
\qed
\end{proof} 

Results later in the paper require the error constants \eqref{eq:e_FD_def}, so, for reference purposes they can be found in Table~\ref{tab:e_FD_and_RK} for the discretizations considered here.
%

% The FD constant values were obtained with: /Users/oliverkrzysik/Documents/projects/pit_advection/code/MOL_SL_trunc_correction/error_estimates/error_constants_RK.m
% The RK constant values were obatined with: /Users/oliverkrzysik/Documents/projects/pit_advection/code/MOL_SL_trunc_correction/error_estimates/error_constants_RK.m
\renewcommand*{\arraystretch}{1.3} 
\begin{table}[b!] 
\caption{
Left: Error constants $\wh{e}_{\rm FD}$ defined in \eqref{eq:e_FD_def} for order $p$ upwind spatial discretizations, U$p$. 
Right: Error constants $\eRK$ defined in \eqref{eq:e_RK_def} for order $q$ Runge-Kutta methods, ERK$q$ and SDIRK$q$. 
\label{tab:e_FD_and_RK}
}
\begin{center}
\begin{tabular}{|c|c|} 
%
% FD table
%
\hline
$p$ & $\wh{e}_{\rm FD}$: U$p$ 								\\ \hline
1 & $\hphantom{-}5 \times 10^{-1}$                      	\\ \hline
2 & $\hphantom{-}3.3333\ldots \times 10^{-1}$    	\\ \hline
3 & $-8.3333\ldots \times 10^{-2}$                       	\\ \hline
4 & $-5 \times 10^{-2}$                                          \\ \hline
5 & $\hphantom{-}1.6667\ldots \times 10^{-2}$    \\ \hline
\end{tabular}
\quad
%
% RK table
%
\begin{tabular}{|c|c|c|c|c|c|} 
\hline
$q$ & $\eRK$: ERK$q$ & $\eRK$: SDIRK$q$ 															    \\ \hline
1 & $-5 \times 10^{-1}$					  & $\hphantom{-}5 \times 10^{-1}$               		\\ \hline
2 & $-1.6667 \ldots \times 10^{-1}$ & $\hphantom{-}4.0440 \ldots \times 10^{-2}$	\\ \hline
3 & $-4.1667 \ldots \times 10^{-2}$ & $-2.5897 \ldots \times 10^{-2}$ 						\\ \hline
4 & $-8.3333 \ldots \times 10^{-3}$ & $-8.4635 \ldots \times 10^{-4}$ 						\\ \hline
5 & $-6.0764 \ldots \times 10^{-4}$ & $\hphantom{-}5.3005 \ldots \times 10^{-4}$ 	\\ \hline
\end{tabular}
\end{center}
\end{table}

% ---------------------------------------------------------------------------------------------- %
% ---------------------------------------------------------------------------------------------- %
\subsection{Runge-Kutta temporal discretization}
\label{sec:RK}

We consider both explicit Runge-Kutta (ERK) and singly diagonally implicit Runge-Kutta (SDIRK) methods with global order of accuracy $q \in \mathbb{N}$, with the resulting methods being denoted by ERK$q$ and SDIRK$q$, respectively.
Note that ERK1 and SDIRK1 correspond to the explicit and implicit Euler methods, respectively.
The \textit{stability function} for such Runge-Kutta methods is the rational function $R_q(z) 
= 
\frac{P(z)}{Q(z)}$,
in which $P$ and $Q$ are polynomials deriving from the Butcher tableau of a given method, with $Q$ being the identity in the case of an ERK method.
The first $q+1$ terms in the Taylor expansion of $R_q(z)$ match that of $\exp(z)$ (see, e.g., \cite{Butcher2003}),
\begin{align} \label{eq:RK_stab_def}
R_q(z) 
&= 
\sum \limits_{j = 0}^{q} \frac{z^j}{j!} + \sum \limits_{j = q+1}^{\infty} \beta_j z^j, \quad \beta_{q+1} \neq \frac{1}{(1+q)!},
\\
&=
\label{eq:e_RK_def}
\exp(z) + \eRK z^{q+1} + {\cal O}(z^{q+2}),
\quad \eRK := \beta_{q+1} - \frac{1}{(q+1)!}.
\end{align}
That is, $\beta_{q+1}$ is the first coefficient in the Taylor expansion of $R_q(z) $ that differs from the coefficients in the Taylor expansion of the exponential.
The constant $\eRK$ is defined to capture the dominant error in the Runge-Kutta approximation.
Values of the error constants $\eRK$ for the Runge-Kutta methods we consider in this work are required later, and they can be found in Table~\ref{tab:e_FD_and_RK}.

% ---------------------------------------------------------------------------------------------- %
% ---------------------------------------------------------------------------------------------- %
\subsection{Method-of-lines discretization and its truncation error}
\label{sec:MOL-trunc-analysis}

We now combine the spatial and temporal discretizations from Section~\ref{sec:FD} and Section~\ref{sec:RK}.
Discretizing the advection problem \eqref{eq:ad} in space using the finite-difference method from Section~\ref{sec:FD}, we arrive at the system of ODEs
\begin{align}
\frac{\d \bm{v}}{\d t} = - \alpha \frac{{\cal L}_p }{h} \bm{v},
\quad
t \in (0, T],
\quad
\bm{v}(0) = \bm{u}_0,
\end{align}
in which $v_i(t) \approx u(x_i, t)$ is the approximation to the solution of PDE \eqref{eq:ad} at the $i$th spatial grid point.
Next these ODEs are discretized using the Runge-Kutta method from Section~\ref{sec:RK}, writing the fully discrete approximation as $\bm{u}_{n} \approx \bm{v}(t_n)$; that is, $(\bm{u}_n)_i \approx u(x_i, t_n)$.
Since ${\cal L}_p$ is diagonalizable, the fully discrete scheme may be written \ptxt{in the one-step form}\footnote{\ptxt{Writing ${\cal M}^{(\delta t)}_{p, q}$ in terms of the rational function $R_q$ is for theoretical purposes only. 
In practice, a time-step with ${\cal M}^{(\delta t)}_{p, q}$ is carried out in the standard way for a Runge-Kutta method: By a sequence of matrix-vector products with ${\cal L}_p$ in the case of an ERK method, and for an SDIRK method through a sequence of linear solves with matrices of the form $I - a {\cal L}_p$, for constant $a$, which we perform with SuiteSparse's UMFPACK \cite{Davis2004}.}} (see, e.g., \ptxt{the derivation in} \cite[Lem. 1]{DeSterck_etal_2021})
\begin{align} \label{eq:MOL_M_def}
\bm{u}_{n+1} 
= 
R_q \bigg(- \alpha \delta t \frac{{\cal L}_p }{h} \bigg) \bm{u}_n 
\equiv
{\cal M}^{(\delta t)}_{p, q} \bm{u}_n,
\quad
n = 0, 1, \ldots, n_t -1,
\end{align}
where we have defined the time-stepping operator for the scheme as ${\cal M}^{(\delta t)}_{p, q} \in \mathbb{R}^{n_x \times n_x}$.
In an attempt to balance spatial and temporal discretization errors we consider only space-time discretizations with $q = p$ (with the exception of the theoretical result in Lemma~\ref{lem:MOL-trunc}). 
We denote the explicit and implicit discretizations by ERK$p$+U$p$ and SDIRK$p$+U$p$, respectively.
The ERK$p$+U$p$ methods have a CFL limit, meaning that a necessary condition for stable time integration is that $\delta t \leq c_{\rm max} h$, with $c := \frac{\alpha \delta t}{h} \in (0, c_{\rm max}]$ the CFL number.
For reference purposes, values of $c_{\rm max}$ for the discretizations we consider in this paper can be found in Table~\ref{tab:CFL_limits}.
All SDIRK$p$+U$p$ schemes we consider are unconditionally stable since the SDIRK methods are A-stable and the eigenvalues of the $-{\cal L}_{p}$ lie in the negative half-plane.
\begin{table}[t!] 
\caption{
CFL limits $c_{\rm max}$ for ERK$p$+U$p$ discretizations of the advection problem \eqref{eq:ad}. 
\label{tab:CFL_limits}
}
\begin{center}
\begin{tabular}{|c|c|c|c|c|c|} 
\hline
Scheme & 
ERK1+U1  &
ERK2+U2 & 
ERK3+U3 & 
ERK4+U4 & 
ERK5+U5 \\ \hline
$c_{\rm max}$ & 
1 & $\frac{1}{2}$ 
& 1.62589$\ldots$ 
& 1.04449$\ldots$
& 1.96583$\ldots$ \\ \hline
\end{tabular}
\end{center}
\end{table}

We conclude this section with Lemma~\ref{lem:MOL-trunc} below estimating the local truncation error of a method-of-lines discretization, followed by Definition~\ref{def:diss_vs_disp} classifying the discretizations in terms of their numerical error.
\begin{lemma}[Method-of-lines truncation error] \label{lem:MOL-trunc}
Define $\bm{u}(t) \in \mathbb{R}^{n_x}$ as the vector composed of the exact solution to PDE \eqref{eq:ad} sampled on the spatial mesh at time $t$. 
Let $c = \frac{\alpha \delta t}{h}$ be the CFL number of the discretization.
Then, 
if the solution to \eqref{eq:ad} is sufficiently smooth, the local truncation error of the above described method-of-lines discretization can be expressed as
\begin{align} \label{eq:Mpq_trunc}
\begin{aligned}
&\bm{u}(t_{n+1}) - {\cal M}^{(\delta t)}_{p, q} \bm{u}(t_n) 
=
{\cal O} \Big(
h^{p+1} \delta t,
h \delta t^{q+1}, 
\delta t^{q+2} 
\Big)
\\&
\quad\quad
-
\left[
c  
\mkern 1mu % This inserts a very small space.
\eFD h^{p+1} 
\frac{{\cal D}^{(p+1)}_s}{h^{p+1}} 
+
(-c)^{q+1} \eRK h^{q+1} 
\frac{{\cal D}^{(q+1)}_s}{h^{q+1}} 
\right] 
\bm{u}(t_{n+1}),
\end{aligned}
\end{align}
where the constants $\eFD$ and $\eRK$ are defined in \eqref{eq:e_FD_def} and \eqref{eq:e_RK_def}, respectively.
\end{lemma}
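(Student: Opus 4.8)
The plan is to substitute the two error estimates already in hand---the finite-difference error of Lemma~\ref{lem:FD_trunc} for $\frac{{\cal L}_p}{h}$ and the stability-function expansion \eqref{eq:e_RK_def} for $R_q$---into the definition ${\cal M}^{(\delta t)}_{p,q} = R_q\big(-\alpha\delta t\frac{{\cal L}_p}{h}\big)$, and then compare the result against the exact solution operator. The starting observation is that, because \eqref{eq:ad} has constant wave speed and periodic boundaries, its exact flow over one step is the shift $u(x,t_{n+1}) = u(x-\alpha\delta t, t_n)$; for sufficiently smooth solutions this is $\bm u(t_{n+1}) = \exp(-\alpha\delta t\,\partial_x)\bm u(t_n)$ up to a controllable Taylor remainder, where $\partial_x$ denotes exact differentiation sampled on the mesh. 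I write $z := -\alpha\delta t\,\partial_x$ for this operator.

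First I would rewrite the argument of $R_q$. By Lemma~\ref{lem:FD_trunc}, acting on the sampled smooth grid function and using $\alpha\delta t = ch$ to convert the prefactor, $-\alpha\delta t\frac{{\cal L}_p}{h} = z + c\,\eFD\,h^{p+1}\frac{{\cal D}^{(p+1)}_s}{h^{p+1}} + {\cal O}(\delta t\,h^{p+1}) =: z + E + {\cal O}(\delta t\,h^{p+1})$. The single most useful structural fact is that $\partial_x$ (on periodic functions), ${\cal L}_p$, and every ${\cal D}^{(k)}_s$ are circulant, hence simultaneously Fourier-diagonalizable and mutually commuting; this is what lets me manipulate the operator exponentials and powers below as if they were scalars (equivalently, one can run the whole argument mode-by-mode on the Fourier symbols and reassemble, in the spirit of \cite{Gander_Lunet_2020}).

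Next I would expand $R_q(A) = \exp(A) + \eRK A^{q+1} + {\cal O}(A^{q+2})$ with $A = z + E + {\cal O}(\delta t h^{p+1})$, treating the two nontrivial pieces separately. For the exponential, commutativity gives $\exp(A) = \exp(z)\exp(E)\big(I + {\cal O}(\delta t h^{p+1})\big)$, so that $\exp(A)\bm u(t_n) = \bm u(t_{n+1}) + E\,\bm u(t_{n+1}) + (\text{higher order})$, the leading spatial term $E\,\bm u(t_{n+1}) = c\,\eFD\,h^{p+1}\frac{{\cal D}^{(p+1)}_s}{h^{p+1}}\bm u(t_{n+1})$ appearing after commuting $E$ through $\exp(z)$ and using $\exp(z)\bm u(t_n)=\bm u(t_{n+1})$. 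For the Runge-Kutta defect I would use $A^{q+1} = z^{q+1} + {\cal O}(\delta t^{q+1}h^p)$, write $z^{q+1} = (-\alpha\delta t)^{q+1}\partial_x^{q+1} = (-c)^{q+1}h^{q+1}\partial_x^{q+1}$, and replace the exact $(q{+}1)$st derivative by $\frac{{\cal D}^{(q+1)}_s}{h^{q+1}}$ at the cost of an ${\cal O}(h^s)\subseteq{\cal O}(h)$ error in the derivative approximation, which propagates to an ${\cal O}(h\,\delta t^{q+1})$ remainder; this yields the second leading term $(-c)^{q+1}\eRK h^{q+1}\frac{{\cal D}^{(q+1)}_s}{h^{q+1}}\bm u(t_{n+1})$, while the ${\cal O}(A^{q+2})$ tail contributes ${\cal O}(\delta t^{q+2})$. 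Subtracting ${\cal M}^{(\delta t)}_{p,q}\bm u(t_n)$ from $\bm u(t_{n+1})$ flips the signs of the two leading terms and collects them into the bracket of \eqref{eq:Mpq_trunc}.

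The genuinely delicate part is the bookkeeping of the remainder rather than the identification of the two leading terms: I must verify that every cross term---the $E^2$ and $E\cdot{\cal O}(\delta t h^{p+1})$ contributions from $\exp(E)$, the mixed $z^qE$-type terms in the binomial expansion of $A^{q+1}$, and the error from swapping $\bm u(t_n)$ for $\bm u(t_{n+1})$ in the two leading terms---falls into one of the three stated classes ${\cal O}(h^{p+1}\delta t)$, ${\cal O}(h\,\delta t^{q+1})$, ${\cal O}(\delta t^{q+2})$ (using $p,q\ge 1$ and, implicitly, that $c$ is held fixed so $\delta t\sim h$). Care is also needed in justifying the finite Taylor remainder for $\exp(z)\bm u(t_n)=\bm u(t_{n+1})$ and in noting that repeated application of $\frac{{\cal L}_p}{h}$ to the smooth grid function preserves the leading-order identities above; both rest on the assumed smoothness of the solution together with the commutativity observation.
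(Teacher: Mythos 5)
Your proposal is correct and takes essentially the same approach as the paper's proof in Appendix~\ref{app:MOL-trunc-proof}: both substitute the exact flow operator $\exp(\pm\alpha\delta t\,\partial_x)$ for the PDE solution, use Lemma~\ref{lem:FD_trunc} to trade $\partial_x$ for $\tfrac{{\cal L}_p}{h}$ plus the $\eFD$ correction, expand $R_q$ via \eqref{eq:e_RK_def}, replace $\big(\tfrac{{\cal L}_p}{h}\big)^{q+1}$ (equivalently $\partial_x^{q+1}$) by $\tfrac{{\cal D}^{(q+1)}_s}{h^{q+1}}$ at an ${\cal O}(h\,\delta t^{q+1})$ cost, and exploit circulant commutativity to collect the remainder classes. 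The only difference is organizational---the paper pulls the exponential identity backwards onto $\bm{u}(t_{n+1})$ and forms $I - {\cal M}^{(\delta t)}_{p,q}\exp(\cdots)$, whereas you expand ${\cal M}^{(\delta t)}_{p,q} = R_q(z+E+\cdots)$ forward on $\bm{u}(t_n)$ and swap to $\bm{u}(t_{n+1})$ in the leading terms at the end---an equivalent rearrangement of the same computation, with the same delicate points (finite Taylor remainder for the shift operator, remainder bookkeeping) handled at the same level of rigor.
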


\begin{proof}
See Appendix~\ref{app:MOL-trunc-proof}.
\qed
\end{proof}

Based on the largest term on right-hand side of \eqref{eq:Mpq_trunc} in Lemma~\ref{lem:MOL-trunc}, we use the following (standard) classification for the numerical error of a method-of-lines discretization.
\begin{definition}[Dissipation versus dispersion] \label{def:diss_vs_disp}
Suppose the dominant term in the truncation error \eqref{eq:Mpq_trunc} of a stable method-of-lines discretization contains the matrix ${\cal D}^{(\xi+1)}_s$ with $\xi \in \mathbb{N}$.
Then, if $\xi$ is odd, the discretization is called dissipative, and if $\xi$ is even, the discretization is called dispersive.
\end{definition}

% ---------------------------------------------------------------------------------------------- %
% ---------------------------------------------------------------------------------------------- %
%\clearpage
\section{On the inadequacy of direct coarse-grid operators}
\label{sec:MOL-redisc-analysis}

In this section we analyze MGRIT convergence for method-of-lines discretizations when using direct coarse-grid operators (i.e., discretizations without corrections to match the truncation error of the ideal coarse-grid operator). 
The framework used to analyze MGRIT convergence is described in Section~\ref{sec:MOL-redisc-conv-framework}, theoretical results are presented in Section~\ref{sec:MOL-redisc-theory}, and numerical results are given in Section~\ref{sec:MOL-redisc-numerical}.

When $\Phi = $ ERK$p$+U$p$ we do not consider rediscretized coarse-grid operators, since for \textit{realistic} fine-grid CFL numbers $c$ (i.e., $c$ not much smaller than $c_{\max}$) and MGRIT coarsening factors $m$, such a coarse-grid operator is unstable, with $mc \gg c_{\rm max}$.
That is, in practice, CFL-limited method-of-lines discretizations are typically used with CFL numbers close to their limits; for example, any choice from around $c = 0.3 c_{\rm max}$, up to $c = 0.95 c_{\rm max}$ would be typical. 
Additionally, to ensure the MGRIT coarse-grid problem is sufficiently inexpensive compared to the fine-grid problem, one would typically need to use a coarsening factor $m \gg 1$ (or in the multilevel setting, the cumulative coarsening factor on the coarsest levels would need to be $\gg 1$).
We do consider, however, combining an explicit fine-grid discretization with an implicit coarse-grid discretization or a semi-Lagrangian discretization since both of these coarse-grid operators are stable. 
%

% ---------------------------------------------------------------------------------------------- %
% ---------------------------------------------------------------------------------------------- %
\subsection{Convergence framework}
\label{sec:MOL-redisc-conv-framework}

Let $\lambda(\omega)$ and $\mu(\omega)$ denote the Fourier symbols/eigenvalues of $\Phi$ and $\Psi$, respectively, associated with a spatial Fourier mode with frequency $\omega$.
For a problem with $n_x$ spatial degrees-of-freedom, $\omega$ discretely samples the interval $[-\pi, \pi)$ with $n_x$ points equally spaced by a distance $h$; the smoothest modes on a given spatial mesh have frequency $\omega = {\cal O}(h)$, and we dub them as \textit{asymptotically smooth}.
Let $\theta$ be a continuous temporal Fourier frequency spanning an interval of length $2 \pi$, and let $\Theta^{\rm low} := \big[ -\frac{\pi }{m}, \frac{\pi}{m} \big)$ be the space of low frequencies. 
Then, \ptxt{let $\widehat{{\cal E}} (\omega, \theta)$ be the two-level MGRIT error propagator for a space-time Fourier mode with frequency $(\omega, \theta)$, and consider the following two-level MGRIT convergence estimate for this mode:}
\begin{align} \label{eq:rho-omega-theta}
\rho \big( \widehat{{\cal E}} (\omega, \theta) \big) 
= 
\big| \lambda(\omega) \big|^{m \nu} \frac{\big| \lambda^m(\omega) - \mu(\omega) \big|}{\big| 1 - e^{- \mathrm{i} m \theta} \mu(\omega) \big|}, \quad (\omega, \theta) \in [- \pi, \pi) \times \Theta^{\rm low}.
\end{align}
Recall from Section~\ref{sec:MGRIT} that $\nu \in \mathbb{N}_0$ is the number of fine-grid CF relaxation sweeps.
For a given mode $(\omega, \theta)$, \eqref{eq:rho-omega-theta} is the infinite grid ($n_t \to \infty$) asymptotic convergence factor of two-level MGRIT as predicted by local Fourier analysis (LFA) \cite{DeSterck_etal_2022_LFA}.
Note that it is not necessary to consider $\rho \big( \widehat{{\cal E}} (\omega, \theta) \big)$ for $\theta \notin \Theta^{\rm low}$, since such modes alias with low-frequency modes on the coarse grid.

In addition to \eqref{eq:rho-omega-theta}, we are interested in the (overall) two-level convergence factor, which is the worst-case convergence factor over all space-time Fourier modes,
\begin{align} \label{eq:rho-asym}
\rho( {\cal E} ) 
:= 
\max \limits_{(\omega, \theta) \in [- \pi, \pi) \times \Theta^{\rm low}}
\rho \big( \widehat{{\cal E}} (\omega, \theta) \big) 
=
\max \limits_{\omega \in [- \pi, \pi)}
\big| \lambda(\omega) \big|^{m \nu} \frac{\big| \lambda^m(\omega) - \mu(\omega) \big|}{1 - \big|  \mu(\omega) \big|}.
\end{align}
In \eqref{eq:rho-asym}, the maximum over $\theta$ has been computed analytically, and for a given pair of discretizations $\Phi$ and $\Psi$, we will  estimate the maximum over $\omega$ by discretely sampling $[-\pi, \pi)$ with $2^{11}$ points.\footnote{Note that $\omega = 0$ is omitted from these calculations since the associated eigenvalue is unity for the discretizations we consider. 
In addition, we often omit a small number (no more than 10) of discrete frequencies closest to $\omega = 0$, since the numerical proximity of the corresponding eigenvalues to the unit circle causes catastrophic cancellation and rounding issues in the evaluation of the estimate, particularly for higher-order discretizations.}
Note that \eqref{eq:rho-asym} also bounds the coarse-grid error reduction on the first MGRIT iteration \cite{Dobrev_etal_2017}. 
Finally, note that while \eqref{eq:rho-asym} is an infinite-grid convergence estimate, it provides accurate information about the effective MGRIT convergence factor on finite-length time intervals \cite{Dobrev_etal_2017,DeSterck_etal_2022_LFA}, which is why we consider it here.

% ---------------------------------------------------------------------------------------------- %
% ---------------------------------------------------------------------------------------------- %
\subsection{Theoretical results for asymptotically smooth characteristic components}
\label{sec:MOL-redisc-theory}

In Theorem~\ref{thm:MOL-rho-lwr-bnd} below, we bound the convergence factor \eqref{eq:rho-asym} from below by analyzing \eqref{eq:rho-omega-theta} for \textit{asymptotically smooth characteristic components}, which are space-time modes satisfying $\theta \approx - \frac{\omega \alpha \delta t}{h}$, with $\omega = {\cal O}(h)$ and $\theta = {\cal O}(\delta t)$.
The lower bound will imply that two-grid convergence is poor due, at least in part, to the poor convergence of asymptotically smooth characteristic components.
In \cite{DeSterck_etal_2022_LFA} we showed heuristically that MGRIT convergence is not robust (with respect to problem and algorithmic parameters) when using a direct coarse-grid operator for an advection-dominated problem because asymptotically smooth characteristic components receive a poor coarse-grid correction, especially relative to all other asymptotically smooth modes.

The novel contribution of Theorem~\ref{thm:MOL-rho-lwr-bnd} is quantifying this heuristic rigorously for a subclass of method-of-lines discretizations.
Specifically, Theorem~\ref{thm:MOL-rho-lwr-bnd} is analogous to \cite[Thms. 6.1 \& 6.2]{DeSterck_etal_2022_LFA}, which used the same arguments when semi-Lagrangian discretizations are used on both the fine and coarse grids.
\begin{theorem} \label{thm:MOL-rho-lwr-bnd}
Suppose $\Phi = {\cal M}_{p,p}^{(\delta t, {\rm fine})}$ and $\Psi = {\cal M}_{p,p}^{(m \delta t, {\rm coarse})}$, with Runge-Kutta error constants \eqref{eq:e_RK_def} denoted by $\eRKF$ and $\eRKC$, respectively.
Suppose $\Phi$ and $\Psi$ use the same finite-difference spatial discretization, but possibly different Runge-Kutta methods---hence the addition of ``fine'' and ``coarse'' superscripts.
Finally, suppose that $p$ is odd.\footnote{The distinction between even and odd $p$ in the analysis in this theorem is whether the dominant truncation error term in Fourier space is real or imaginary (see also Definition~\ref{def:diss_vs_disp}).
For simplicity we only consider odd $p$, but note that analogous results can be derived for even $p$, although they have some additional subtleties (see \cite[Rem. 6.3]{DeSterck_etal_2022_LFA}). Numerical results later in the paper will consider both even and odd $p$, with a focus on odd $p$.}
Let $c = \frac{\alpha \delta t}{h}$ be the fine-grid CFL number, and $m$ the coarsening factor.
Then, the two-level MGRIT convergence factor \eqref{eq:rho-asym} satisfies the following lower bound independent of the number of CF-relaxations $\nu$
\begin{align} \label{eq:rho-lwr-bnd-char}
\rho({\cal E})
&\geq
\rho \bigg(
\widehat{{\cal E}} \bigg(\omega, \theta = - \frac{\omega \alpha \delta t}{h} \bigg)
\bigg) 
\bigg|_{\omega = {\cal O}(h)}
\\
\label{eq:rho-lwr-bnd}
&=
\widecheck{\rho}_p ( c )
\left( 
1 + 
{\cal O}
\left(h, 
\delta t, 
\delta t 
\left(\frac{\delta t}{h} \right)^{p}, 
\delta t \left( \frac{\delta t}{h} \right)^{p+1} \right) \right),
\end{align}
where the function $\widecheck{\rho}_p ( c )$ is
\begin{align} \label{eq:rho-check-MOL}
\widecheck{\rho}_p ( c )
:=
c^p
\left|
\frac{\eRKF - m^p \, \eRKC}
{\eFD + (mc)^p \, \eRKC}
\right|.
\end{align}
In addition, asymptotically in the coarse-grid CFL number $m c$, $\widecheck{\rho}_p ( c )$ satisfies
\begin{align} \label{eq:rho-check-MOL-asym}
\widecheck{\rho}_p ( c )
=
\begin{cases}
\displaystyle{
\left| 1 - \frac{1}{m^p} \frac{\eRKF}{\eRKC} \right| 
+ 
{\cal O} \left( \frac{1}{(mc)^p} \right),
}
\quad
&m c \gg 1,
\\[3ex]
{\cal O}\big( (mc )^p \big),
\quad
&m c \ll 1.
\end{cases}
\end{align}
\end{theorem}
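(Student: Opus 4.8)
The plan is to pass to Fourier space, where the analysis reduces to scalar computations. Since $\Phi$ and $\Psi$ share the spatial stencil ${\cal L}_p$, both are circulant and are diagonalized by the same spatial modes; writing $L_p(\omega)$ for the eigenvalue of ${\cal L}_p$ on the mode of frequency $\omega$, the Fourier-space form of Lemma~\ref{lem:FD_trunc} gives $L_p(\omega) = \mathrm{i}\omega - \eFD(\mathrm{i}\omega)^{p+1} + {\cal O}(\omega^{p+2})$. Setting the single variable $w := -c\,L_p(\omega)$, the fine- and coarse-grid symbols become $\lambda(\omega) = R_p^{\rm fine}(w)$ and $\mu(\omega) = R_p^{\rm coarse}(mw)$ (the coarse CFL number $mc$ turns the argument into $mw$), so that $\lambda^m(\omega) = \big[R_p^{\rm fine}(w)\big]^m$. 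The key structural point is that \emph{all} spatial-discretization information is carried by the shared quantity $w$; for asymptotically smooth modes $\omega = {\cal O}(h)$ one has $w = {\cal O}(\delta t)$, which is the small parameter of the expansion.

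Everything then follows from the expansion $R_q(z) = e^z + \eRK z^{q+1} + {\cal O}(z^{q+2})$ of \eqref{eq:e_RK_def}, applied on each grid. For the \emph{numerator} of \eqref{eq:rho-omega-theta} I would expand $\lambda^m$ and $\mu$ about their common shift $e^{mw}$: each equals $e^{mw}$ plus an ${\cal O}(w^{p+1})$ term, and because $e^{mw}$ and every power of $w$ are identical on the two grids, the spatial contributions cancel and only the Runge-Kutta constants remain, giving $\lambda^m - \mu = m\big(\eRKF - m^p\,\eRKC\big)w^{p+1} + {\cal O}(w^{p+2})$. This is precisely the cancellation that removes $\eFD$ from the numerator of $\widecheck{\rho}_p$. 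For the \emph{denominator} I would use the characteristic frequency: with $\theta = -c\omega$ one has $e^{-\mathrm{i}m\theta} = e^{\mathrm{i}mc\omega}$, which exactly cancels the leading shift $e^{-\mathrm{i}mc\omega}$ in $\mu$, leaving $1 - e^{-\mathrm{i}m\theta}\mu = -\big(mc\,\eFD + (mc)^{p+1}\eRKC\big)(\mathrm{i}\omega)^{p+1} + (\text{higher order})$; here no cancellation occurs, so both the spatial and temporal error constants survive. Substituting $w^{p+1} = (-c)^{p+1}(\mathrm{i}\omega)^{p+1}\big(1 + {\cal O}(h^p)\big)$, and using that $p$ odd makes $(\mathrm{i}\omega)^{p+1}$ real with $(-c)^{p+1} = c^{p+1}$ (cf.\ Definition~\ref{def:diss_vs_disp}), the common factor $(\mathrm{i}\omega)^{p+1}$ cancels in the modulus ratio. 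Together with $|\lambda(\omega)|^{m\nu} = 1 + {\cal O}(\delta t) \to 1$, the leading ratio is exactly $\widecheck{\rho}_p(c)$ in \eqref{eq:rho-check-MOL}. The lower bound \eqref{eq:rho-lwr-bnd-char} is then immediate, since $\rho({\cal E})$ is the maximum of $\rho\big(\widehat{{\cal E}}\big)$ over all admissible modes and the characteristic mode $(\omega, -c\omega)$ with $\omega = {\cal O}(h)$ is admissible: its $\theta = {\cal O}(h)$ lies in $\Theta^{\rm low}$ once $h$ is small.

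The asymptotics \eqref{eq:rho-check-MOL-asym} follow by inspecting the denominator $\eFD + (mc)^p\,\eRKC$ of \eqref{eq:rho-check-MOL}: for $mc \gg 1$ the term $(mc)^p\eRKC$ dominates (assuming $\eRKC \neq 0$), and factoring it out collapses $\widecheck{\rho}_p(c)$ to $\big|1 - m^{-p}\eRKF/\eRKC\big|$ up to an ${\cal O}\big((mc)^{-p}\big)$ correction; for $mc \ll 1$ the constant $\eFD$ dominates, so $\widecheck{\rho}_p(c) = {\cal O}(c^p) = {\cal O}\big((mc)^p\big)$ since $m$ is fixed. I expect the genuine obstacle to be neither these cancellations nor the asymptotics, but the relative-error accounting behind the four terms in \eqref{eq:rho-lwr-bnd}. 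These arise as follows: the phase factors $e^{\pm\mathrm{i}c\omega}, e^{\pm\mathrm{i}mc\omega}$ contribute ${\cal O}(\delta t)$ (since $c\omega, mc\omega = {\cal O}(\delta t)$); the subleading spatial symbol $(\mathrm{i}\omega)^{p+2}$ relative to $(\mathrm{i}\omega)^{p+1}$ contributes ${\cal O}(h)$; and the higher-order Runge-Kutta and finite-difference remainders, measured against the spatial ($\sim\delta t\,h^p$) and temporal ($\sim\delta t^{p+1}$) parts of the denominator $mc\,\eFD + (mc)^{p+1}\eRKC$, are what generate the $\delta t(\delta t/h)^p$ and $\delta t(\delta t/h)^{p+1}$ contributions. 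Carrying out this accounting uniformly across the two regimes of \eqref{eq:rho-check-MOL-asym}, and verifying that $\eFD + (mc)^p\eRKC$ (and $\eRKC$ when $mc \gg 1$) stays bounded away from zero so the relative errors remain controlled, is the delicate part; the overall argument parallels \cite[Thms.~6.1~\&~6.2]{DeSterck_etal_2022_LFA}, whose semi-Lagrangian proof I would use as a template.
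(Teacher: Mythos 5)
Your proposal is correct and takes essentially the same route as the paper's proof: it establishes \eqref{eq:rho-lwr-bnd-char} by noting the characteristic modes are admissible, expands the fine- and coarse-grid symbols for asymptotically smooth modes, evaluates the numerator and denominator of \eqref{eq:rho-omega-theta} at $\theta = -c\omega$ (with the same cancellations producing $\eRKF - m^p\,\eRKC$ above and $\eFD + (mc)^p\,\eRKC$ below, the relaxation factor tending to one, and the same relative-error bookkeeping giving \eqref{eq:rho-lwr-bnd}), and derives \eqref{eq:rho-check-MOL-asym} by the same two-regime expansion of the denominator. The only cosmetic difference is that you obtain the symbol expansions directly from $\lambda(\omega) = R_p\big({-c\,L_p(\omega)}\big)$ and $\mu(\omega) = R_p\big({-mc\,L_p(\omega)}\big)$ using the finite-difference symbol and \eqref{eq:e_RK_def}, whereas the paper Fourier-transforms the truncation-error estimate of Lemma~\ref{lem:MOL-trunc}; these are the same expansions, since that lemma is itself assembled from exactly those two ingredients.
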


\begin{proof}
From \eqref{eq:rho-asym}, it is clear that the inequality \eqref{eq:rho-lwr-bnd-char} holds, since the asymptotically smooth characteristic components $\theta = - \frac{\omega \alpha \delta t}{h}$ with $\omega = {\cal O}(h)$ represent only a subset of all modes over which the maximum is taken.

The next part of this proof establishes \eqref{eq:rho-lwr-bnd} by developing eigenvalue estimates for the fine- and coarse-grid discretizations, and then evaluating $\rho\big( 
\wh{{\cal E}}(\omega, \theta)
\big )$ at the aforementioned asymptotically smooth characteristic components.
In some ways, these steps mimic those used in \cite[Thms. 6.1 \& 6.2]{DeSterck_etal_2022_LFA} for semi-Lagrangian discretizations.

To estimate the eigenvalues of $\Phi$ and $\Psi$, we transform their truncation error estimates from Lemma~\ref{lem:MOL-trunc} into Fourier space where they can be interpreted as eigenvalue estimates.
Further detail about how to do this, albeit in the context of semi-Lagrangian discretizations, can be found in the proof of \cite[Thm. B.3]{DeSterck_etal_2022_LFA}.
For the fine-grid discretization $\Phi = {\cal M}_{p,p}^{(\delta t, {\rm fine})}$, the truncation error statement \eqref{eq:Mpq_trunc} gives the following eigenvalue estimate for asymptotically smooth modes $\omega = {\cal O}(h)$ when $p$ is odd:
\begin{align} \label{eq:eig-MOL-fine-est}
\begin{split}
\lambda(\omega) =
\exp\left( - \frac{\im \omega \alpha \delta t}{h} \right)
\bigg[
1 
&+ 
(-1)^{\frac{p+1}{2}} c
\left(
\wh{e}_{\rm FD} + c^p 
\mkern 1mu % This inserts a very small space.
\eRKF
\right) \omega^{p+1}
\\
&\quad\quad+
{\cal O}
\big( 
\omega^{p+2}, 
\omega^{p+1} \delta t, 
\omega \delta t^{p+1}, 
\delta t^{p+2} \big)
\bigg].
\end{split}
\end{align}
Raising $\lambda(\omega)$ to the power $m$ and applying the binomial expansion gives 
\begin{align} \label{eq:eig-MOL-ideal-est}
\begin{split}
\lambda^m(\omega) =
\exp\left( - \frac{\im \omega \alpha m \delta t}{h} \right)
\bigg[
1 
&+ 
(-1)^{\frac{p+1}{2}} m c
\left(
\wh{e}_{\rm FD} 
+ 
c^p 
\mkern 1mu % This inserts a very small space.
\eRKF
\right) \omega^{p+1}
\\
&\quad\quad+
{\cal O}
\big( 
\omega^{p+2}, 
\omega^{p+1} \delta t, 
\omega \delta t^{p+1}, 
\delta t^{p+2} \big)
\bigg].
\end{split}
\end{align}
Now consider the eigenvalue estimate for the coarse-grid operator $\Psi = {\cal M}_{p,p}^{(m \delta t, {\rm coarse})}$. 
This estimate takes the same form as that of \eqref{eq:eig-MOL-fine-est} except the CFL number is $mc$ rather than $c$:
\begin{align} \label{eq:eig-MOL-redisc-est}
\begin{split}
\mu(\omega) =
\exp\left( - \frac{\im \omega \alpha m \delta t}{h} \right)
\bigg[
1 
&+ 
(-1)^{\frac{p+1}{2}} m c
\left(
\wh{e}_{\rm FD} 
+ 
(mc)^p 
\mkern 1mu % This inserts a very small space.
\eRKC
\right) \omega^{p+1}
\\
&\quad\quad+
{\cal O}
\big( 
\omega^{p+2}, 
\omega^{p+1} \delta t, 
\omega \delta t^{p+1}, 
\delta t^{p+2} \big)
\bigg].
\end{split}
\end{align}

Using \eqref{eq:eig-MOL-ideal-est} and \eqref{eq:eig-MOL-redisc-est} we can now estimate the convergence factor \eqref{eq:rho-omega-theta} for asymptotically smooth modes, $\omega = {\cal O}(h)$.
Considering the pre-relaxation factor $| \lambda(\omega) |^{m \nu}$ in \eqref{eq:rho-omega-theta}, we have from \eqref{eq:eig-MOL-ideal-est} that
\begin{align} \label{eq:lwr-bnd-proof-relax-est}
\big| \lambda(\omega) \big|^{m \nu}
=
\bigg|
\exp\bigg(- \frac{\im \omega \alpha m \delta t}{h} \bigg)
\bigg|^{\nu}
\big| 1 + {\cal O}(\omega^{p+1}) \big|^{\nu}
=
1 + {\cal O}(h^{p+1}).
\end{align}
(Note that the interpretation of this result is that relaxation has essentially no impact on the convergence of asymptotically smooth modes; see also \cite[Sec. 6.1]{DeSterck_etal_2022_LFA}.)
Next, from \eqref{eq:eig-MOL-ideal-est} and \eqref{eq:eig-MOL-redisc-est}, we have the following estimate for the numerator of the fraction in \eqref{eq:rho-omega-theta}:
\begin{align} 
&\big| \lambda^m(\omega) - \mu(\omega) \big|
=
\bigg|
\exp\left( - \frac{\im \omega \alpha m \delta t}{h} \right)
\bigg[
(-1)^{\frac{p+1}{2}} 
m c^{p+1}
\Big(
\eRKF
-
m^p \eRKC
\Big) 
\omega^{p+1}
\\
\notag
&\hspace{20ex}
+
{\cal O}
\big( 
\omega^{p+2}, 
\omega^{p+1} \delta t, 
\omega \delta t^{p+1}, 
\delta t^{p+2} \big)
\bigg]
\bigg|,
\\
\label{eq:lwr-bnd-proof-numer-est}
&\quad\quad\quad=
m c^{p+1}
\Big|
\eRKF
-
m^p \eRKC
\Big| 
\omega^{p+1}
\Big[1 + 
{\cal O}
\Big( 
\omega, 
\delta t, 
\delta t \left( \tfrac{\delta t}{\omega} \right)^p,
\delta t \left( \tfrac{\delta t}{\omega} \right)^{p+1} 
\Big)
\Big].
\end{align}
Finally, consider the denominator of the fraction in \eqref{eq:rho-omega-theta} for the characteristic modes $\theta = - \frac{\omega \alpha \delta t}{h}$, $\omega = {\cal O}(h)$:
\begin{align} 
&\Big|
1 - e^{- \im m \theta}\mu(\omega)
\Big|_{\theta = - \frac{\omega \alpha \delta t}{h}}
=
\Big|
- (-1)^{\frac{p+1}{2}} m c
\left(
\wh{e}_{\rm FD} + (mc)^p \eRKC
\right) 
\omega^{p+1}
\\ \notag
&\hspace{20ex}
+
{\cal O}
\big( 
\omega^{p+2}, 
\omega^{p+1} \delta t, 
\omega \delta t^{p+1}, 
\delta t^{p+2} \big)
\Big|,
\\
\label{eq:lwr-bnd-proof-denom-est}
&=
m c
\Big|
\wh{e}_{\rm FD} + (mc)^p \eRKC
\Big|
\omega^{p+1}
\Big[1 + 
{\cal O}
\Big( 
\omega, 
\delta t, 
\delta t \left( \tfrac{\delta t}{\omega} \right)^p,
\delta t \left( \tfrac{\delta t}{\omega} \right)^{p+1} 
\Big)
\Big].
\end{align}

Plugging \eqref{eq:lwr-bnd-proof-relax-est}, \eqref{eq:lwr-bnd-proof-numer-est} and \eqref{eq:lwr-bnd-proof-denom-est} into \eqref{eq:rho-omega-theta} gives the estimate
\begin{align}
\notag
&\rho \bigg(
\widehat{{\cal E}} \bigg(\omega, \theta = - \frac{\omega \alpha \delta t}{h} \bigg)
\bigg) 
\bigg|_{\omega = {\cal O}(h)}
=
\frac{\Big[1 + 
{\cal O}
\Big( 
\omega, 
\delta t, 
\delta t \left( \tfrac{\delta t}{\omega} \right)^p,
\delta t \left( \tfrac{\delta t}{\omega} \right)^{p+1} 
\Big)
\Big]}{\Big[1 + 
{\cal O}
\Big( 
\omega, 
\delta t, 
\delta t \left( \tfrac{\delta t}{\omega} \right)^p,
\delta t \left( \tfrac{\delta t}{\omega} \right)^{p+1} 
\Big)
\Big]} \times 
\\
&c^p
\left|
\frac{
\eRKF
-
m^p \eRKC
}
{
\eFD + (mc)^p \eRKC 
}
\right|
=
\widecheck{\rho}_p(c)
\Big[1 + 
{\cal O}
\Big( 
\omega, 
\delta t, 
\delta t \left( \tfrac{\delta t}{\omega} \right)^p,
\delta t \left( \tfrac{\delta t}{\omega} \right)^{p+1} 
\Big)
\Big],
\end{align}
where the final equality follows from the geometric expansion $[1 + 
{\cal O}(\epsilon)]^{-1} = [1 + 
{\cal O}(\epsilon)]$.
The claimed result \eqref{eq:rho-lwr-bnd} follows by substituting $\omega = {\cal O}(h)$ into the final equation above.

Now we show that the asymptotic relations in \eqref{eq:rho-check-MOL-asym} hold.
Rearranging \eqref{eq:rho-check-MOL},
\begin{align} \label{eq:rho-check-rewrite}
\widecheck{\rho}_p ( c )
:=
c^p
\left|
\frac{\eta - m^p}
{\xi + (mc)^p}
\right|,
\quad
\eta := \frac{\eRKF}{\eRKC}, 
\quad
\xi := \frac{\eFD}{\eRKC}.
\end{align}
Now consider the denominator of the above fraction in the two asymptotic regimes:
\begin{align} \label{eq:rho-check-denominator}
\frac{1}
{\xi + (mc)^p}
=
\begin{cases}
\frac{1}{\xi} \Big[ 1 + {\cal O}\big( (mc)^p \big) \Big], \quad &mc \ll 1,
\\
(mc)^{-p} \Big[ 1 + {\cal O} \big( (mc)^{-p} \big) \Big], \quad &mc \gg 1.
\end{cases}
\end{align}
First consider the $mc \ll 1$ case; plugging \eqref{eq:rho-check-denominator} into \eqref{eq:rho-check-rewrite} gives
\begin{align}
c^p
\left|
\frac{\eta - m^p}
{\xi + (mc)^p}
\right|
=
\frac{c^p}{|\xi|}
\left|
\big(\eta - m^p \big)
\Big[ 1 + {\cal O}\big( (mc)^p \big) \Big]
\right|
=
{\cal O} \big(c^p, (mc)^p \big)
=
{\cal O} \big( (mc)^p \big).
\end{align}
%
%Note that $m c > c$ since $m > 1$.
%
Next, consider the $m c \gg 1$ case; plugging \eqref{eq:rho-check-denominator} into \eqref{eq:rho-check-rewrite} gives
\begin{align}
c^p
\left|
\frac{\eta - m^p}
{\xi + (mc)^p}
\right|
=
\frac{c^p}{c^p}
\left|
\frac{\big(\eta - m^p \big)}{m^p}
\Big[ 1 + {\cal O} \big( (mc)^{-p} \big) \Big]
\right|
=
\left|
1 - 
\frac{\eta}{m^p}
\right|
\Big[ 1 + {\cal O} \big( (mc)^{-p} \big) \Big].
\end{align}
Expanding the last equation gives the result in \eqref{eq:rho-check-MOL-asym}.
\qed
\end{proof}

Recall from our earlier discussions that practically relevant coarse-grid CFL numbers $mc$ satisfy $mc \gg 1$. 
From Theorem~\ref{thm:MOL-rho-lwr-bnd}, and specifically \eqref{eq:rho-lwr-bnd} and \eqref{eq:rho-check-MOL-asym}, we see that MGRIT convergence of asymptotically smooth characteristic components is fast for the non-practically relevant case of $m c \ll 1$, while it is worse for $mc \gg 1$.
Specifically, for $m c \gg 1$, the MGRIT convergence factor of asymptotically smooth characteristic components is approximately $\Big| 1 - m^{-p} \frac{\eRKF}{\eRKC} \Big|$, which quickly approaches unity as $m$ increases.
Note that if the same Runge-Kutta method is used on both grids, then this approximate convergence factor simplifies to $1 - m^{-p}$.

Theorem~\ref{thm:MOL-rho-lwr-bnd} does not cover the situation in which a direct coarse-grid semi-Lagrangian method is paired with an explicit or implicit method-of-lines discretization on the fine grid.
However, similar arguments can be used to show that asymptotically smooth characteristic components receive a poor correction.
Also recall the example discussed in Section~\ref{sec:introduction} (see Footnote~\ref{ftn:ERK1+U1-SL-equiv}), in which ERK1+U1 on the fine grid is paired with a first-order semi-Lagrangian discretization on the coarse-grid.
For this specific case the fine-grid method is equivalent to a semi-Lagrangian method, so \cite[Thm. 6.4]{DeSterck_etal_2022_LFA} can be used to show that the two-level convergence factor \eqref{eq:rho-asym} exceeds unity on the interval $c/c_{\rm max} \in \big(\tfrac{1}{m}, 1-\tfrac{1}{m} \big)$, which quickly limits to the entire range of possible CFL numbers as $m$ is increased.

% ---------------------------------------------------------------------------------------------- %
% ---------------------------------------------------------------------------------------------- %
\subsection{Numerical results}
\label{sec:MOL-redisc-numerical}

Now we numerically verify Theorem~\ref{thm:MOL-rho-lwr-bnd} and expand upon it by examining the convergence factor \eqref{eq:rho-omega-theta} for all modes ${\omega \in [-\pi, \pi)}$, not just those that are asymptotically smooth (as in Theorem~\ref{thm:MOL-rho-lwr-bnd}). 
%

% These figures were generated with the script: /Users/oliverkrzysik/Library/CloudStorage/OneDrive-UniversityofWaterloo/projects/pit_advection/code/MOL_SL_trunc_correction/constant/CFL_conv_tests/paper-results/rho_vs_CFL_plots.m
% The experimental data that is overlaid on them is from: '/Users/oliverkrzysik/Library/CloudStorage/OneDrive-UniversityofWaterloo/projects/pit_advection/code/braid/paper-results-MOL-const/convergence-studies/implicit/rho_num_data/rediscretize/two_level/'
%
\begin{figure}[t!]
\centerline{
\includegraphics[scale=0.32]{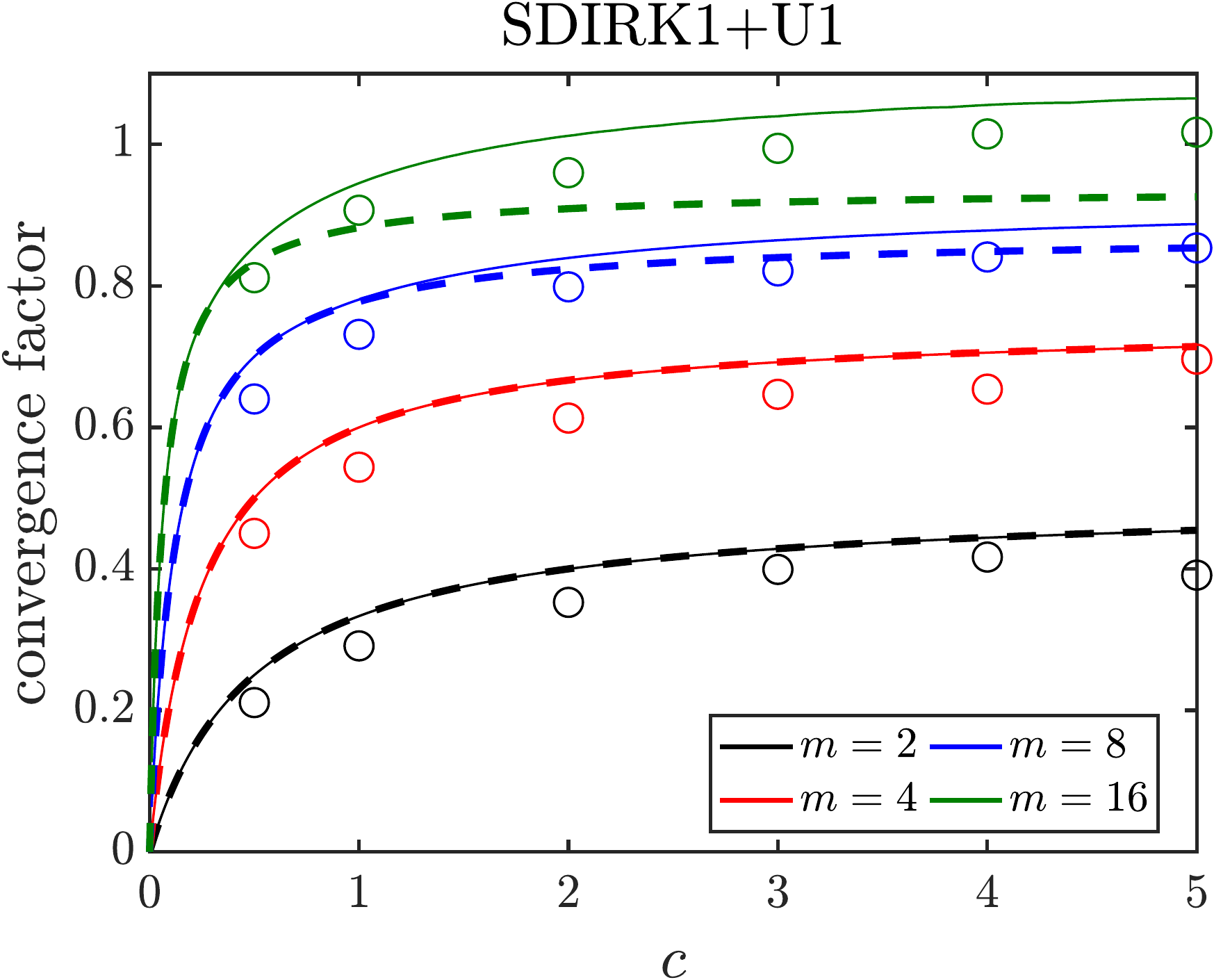}
\quad
\includegraphics[scale=0.32]{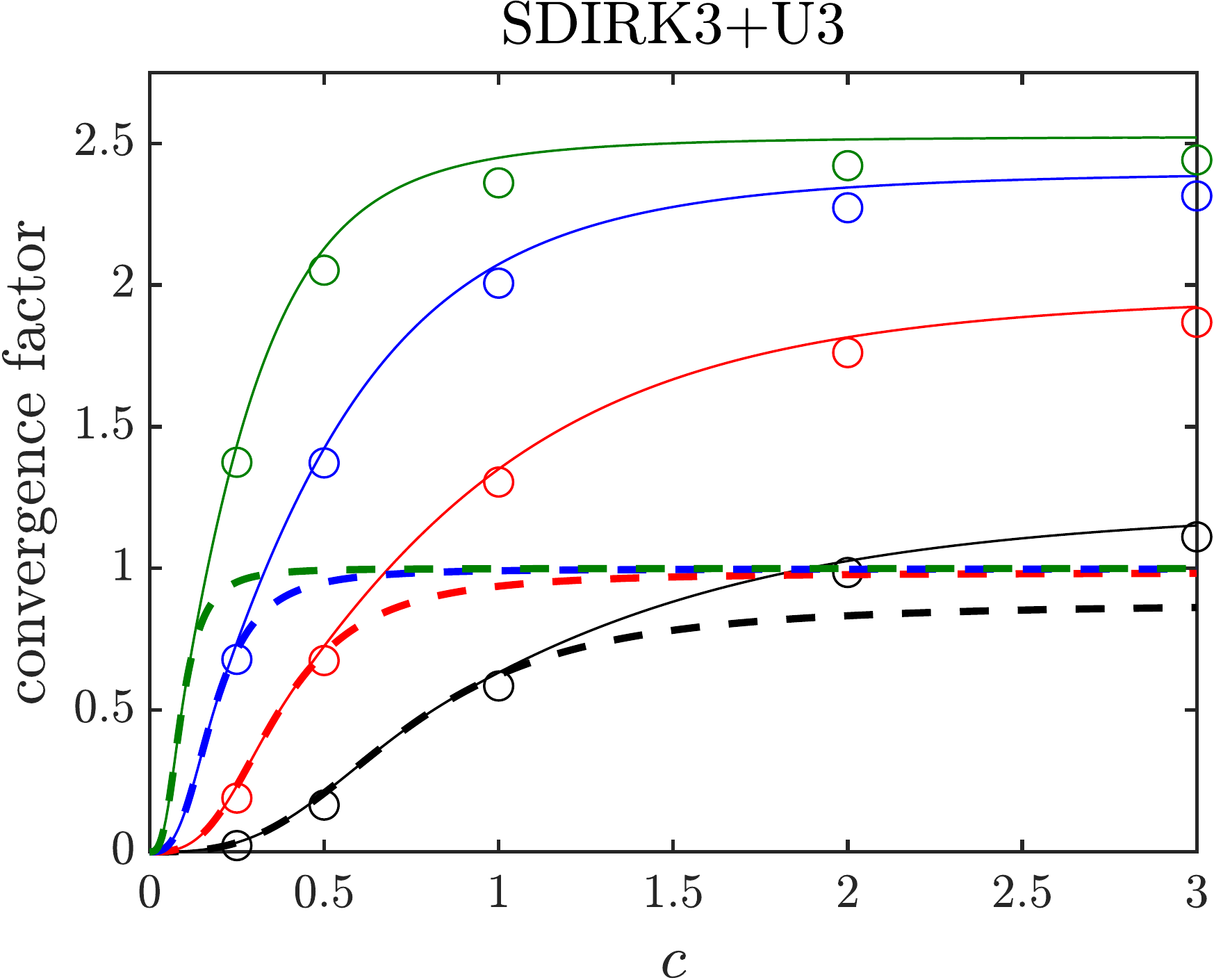}
}
\caption{
MGRIT two-grid convergence factors for implicit method-of-lines discretizations of order $p = 1$ (left), and $p = 3$ (right), as a function of the fine-grid CFL number $c$ when rediscretizing on the coarse grid. 
MGRIT uses FCF-relaxation ($\nu = 1$), and a coarsening factor of $m$.
Thin solid lines are the two-level LFA convergence factor \eqref{eq:rho-asym}.
Thick dashed lines are the function $\widecheck{\rho}_p ( c ) $ from \eqref{eq:rho-check-MOL} that acts as a lower bound on this convergence factor.
Open circle markers are experimentally determined effective convergence factors on finite-length time domains.
Note the different axis limits for each plot.
\label{fig:rho_vs_CFL-SDIRK-redisc}
}
\end{figure}

Fig.~\ref{fig:rho_vs_CFL-SDIRK-redisc} shows, as a function of fine-grid CFL number, the two-level MGRIT convergence factor \eqref{eq:rho-asym} (the thin solid lines) for two implicit discretizations. 
Clearly, MGRIT convergence is not fast for all combinations of $m$ and $c$, with the two-level convergence factor often greater than one-half for $p = 1$ and often greater than one for $p = 3$; recall that a convergence factor bigger than one indicates that MGRIT diverges. 
For fixed $m$, this convergence factor seems to converge to a constant for $c \gg 1$, with this constant being larger than unity for sufficiently large $m$ (recall from the beginning of Section~\ref{sec:MOL-redisc-analysis} that practically relevant coarsening factors satisfy $m \gg 1$).
We note that the convergence estimates for SDIRK1+U1 in Fig.~\ref{fig:rho_vs_CFL-SDIRK-redisc} are consistent with the maxima of the convergence factors reported in \cite[Fig. 4.1(c)]{Dobrev_etal_2017}, which considered the same discretization for $m = 2$ and $c \in \{1, 4\}$.

To verify that \eqref{eq:rho-asym} is an accurate predictor of the effective convergence factor on finite-length time domains, in Fig.~\ref{fig:rho_vs_CFL-SDIRK-redisc} we overlay effective convergence factors (the open circles) as measured from numerical tests.
These numerical tests use \ptxt{$n_x \times n_t = 2^{10} \times 2^{12}$} degrees-of-freedom and the convergence factor from the final iteration, with MGRIT being run either until the $\ell^2$-norm of the residual was decreased by at least 10 orders of magnitude, or until the number of iterations would exceed 20. 
\ptxt{
In these tests, the initial MGRIT iterate was taken to be uniformly random, except at $t = 0$ where it matches the initial condition in \eqref{eq:ad}, which we take as ${u_0(x) = \sin^4 (\pi x)}$. 
The MGRIT implementation is provided by XBraid \cite{xbraid}, which we have modified so that it applies the linear method described in Section \ref{sec:MGRIT} rather than an FAS version of it (see further details in Sec. 2 of the Supplementary Materials from \cite{DeSterck_etal_2022}).
}

Finally, also shown in Fig.~\ref{fig:rho_vs_CFL-SDIRK-redisc} is the lower bound \eqref{eq:rho-lwr-bnd} from Theorem~\ref{thm:MOL-rho-lwr-bnd} (the thick dashed lines).
In all cases, the lower bound appears tight for small coarse-grid CFL numbers, $m c \ll 1$; for larger $mc$, the bound is not tight in most cases.
In regions where the lower bound is tight, it means that the convergence of asymptotically smooth characteristic components is the slowest of all space-time modes.
To better understand what is occurring where the lower bound is not tight, we have considered contour plots of the MGRIT spectral radius \eqref{eq:rho-omega-theta} in Fourier space. 
We omit these plots here for brevity; see \cite[Fig. 1]{DeSterck_etal_2022_LFA} for related plots.
These plots indicate that the slowest-converging modes are no longer asymptotically smooth, but are nonetheless smooth relative to the most oscillatory modes $|\omega| \approx \pi$.

To summarize the findings in this section, the slow convergence of asymptotically smooth characteristic components precludes fast and robust MGRIT convergence when directly discretizing on the coarse grid. 
However, there may also exist other relatively smooth modes with ${\cal O}(h) < |\omega| \ll \pi$ that are not damped effectively by MGRIT. 
Since both of these sets of modes are smooth, they cannot be efficiently damped through additional relaxation, and instead must be targeted by an improved coarse-grid correction (see also \cite[Sec. 6.1]{DeSterck_etal_2022_LFA}).
%

% ---------------------------------------------------------------------------------------------- %
% ---------------------------------------------------------------------------------------------- %
%\clearpage
\section{Coarse-grid operators with corrected truncation error}
\label{sec:MOL-trunc-correction}

In this section, we develop improved coarse-grid operators for method-of-lines discretizations.
The two-level coarse-grid operators are described in Section~\ref{sec:MOL-trunc-correction-operator}, with generalizations to the multilevel setting given in Section~\ref{sec:MOL-trunc-correction-operator-multilevel}, and numerical results in Sections~\ref{sec:MOL-trunc-correction-num-results-diss}~and~\ref{sec:MOL-trunc-correction-num-results-disp}.

% ---------------------------------------------------------------------------------------------- %
% ---------------------------------------------------------------------------------------------- %
%\clearpage
\subsection{Coarse-grid operators \ptxt{for fine-grid method-of-lines discretizations}}
\label{sec:MOL-trunc-correction-operator}

The coarse-grid operators developed 
\ptxt{in this section are motivated by those we developed previously in \cite[Sec. 3]{DeSterck_etal_2022} (which we briefly recalled in Section \ref{sec:SL})} 
for fine-grid semi-Lagrangian discretizations, and by the characteristic component analysis from Section~\ref{sec:MOL-redisc-analysis}.
Recall from Section~\ref{sec:MOL-redisc-analysis} that we showed direct method-of-lines-based coarse-grid operators fail, at least in part, because of a mismatch between their truncation error and that of the ideal coarse-grid operator's for asymptotically smooth characteristic components.
For analogous reasons, as discussed at the end of Section~\ref{sec:MOL-redisc-theory}, a direct coarse-grid semi-Lagrangian discretization also fails to yield satisfactory MGRIT convergence when paired with a fine-grid method-of-lines discretization.
Our idea here then is to create a coarse-grid modified semi-Lagrangian operator with a truncation error that matches to lowest order the ideal coarse-grid operator for a fine-grid method-of-lines operator.
The following lemma lays the foundations for how such a modified semi-Lagrangian family of coarse-grid operators may be constructed.
\begin{lemma}[Ideal coarse-grid operator perturbation] \label{lem:Psi_ideal-MOL-pert}
Suppose $\Phi = {\cal M}_{p,p}^{(\delta t)}$, with spatial and temporal error constants $\eFD$ and $\eRK$, respectively.
Suppose the solution of PDE \eqref{eq:ad} is sufficiently smooth, and let $\bm{u}(t_{n}) \in \mathbb{R}^{n_x}$ denote this solution at time $t_n$ sampled at the spatial mesh points.
Then, the action of the ideal coarse-grid operator $\Psi_{\rm ideal} = \Phi^m$ can be expressed as a perturbation to a direct, coarse-grid semi-Lagrangian discretization ${\cal S}_p^{(m \delta t)}$ in the following two ways
\begin{align}
\label{eq:Psi_ideal-SL_redisc-pert0}
\Psi_{\rm ideal} \bm{u}(t_{n}) 
&=
%{\cal S}_p^{(m \delta t)} \bm{u}(t_n) 
%+ 
%\varphi^{(m \delta t)}_{p+1} {\cal D}^{(p+1)}_s {\cal S}_p^{(m \delta t)} \bm{u}(t_{n}) 
\Big(
I
+ 
\varphi^{(m \delta t)}_{p+1} {\cal D}^{(p+1)}_s 
\Big) 
{\cal S}_p^{(m \delta t)} \bm{u}(t_{n}) 
+ 
{\cal O}\big( 
h^{p+2}, 
h^{p+1} \delta t,
h \delta t^{p+1}, 
\delta t^{p+2} \big),
\\
\label{eq:Psi_ideal-SL_redisc-pert}
&
= 
\Big( I - \varphi^{(m \delta t)}_{p+1} {\cal D}^{(p+1)}_s \Big)^{-1} {\cal S}_p^{(m \delta t)} \bm{u}(t_n) 
+
{\cal O}\big( 
h^{p+2}, 
h^{p+1} \delta t,
h \delta t^{p+1}, 
\delta t^{p+2} \big),
\end{align}
where the constant $\varphi^{(m \delta t)}_{p+1}$ is given by
\begin{align} \label{eq:varphi-constant-MOL-SL}
\varphi^{(m \delta t)}_{p+1} 
= 
m
\left[
c  
\mkern 1mu % This inserts a very small space.
\eFD 
+
(-c)^{p+1} \eRK 
\right]
+
(-1)^{p+1} f_{p+1} \big( \varepsilon^{(m \delta t)} \big).
\end{align}
In \eqref{eq:varphi-constant-MOL-SL}, the term $(-1)^{p+1} f_{p+1} \big( \varepsilon^{(m \delta t)} \big)$ is associated with the truncation error of the coarse-grid semi-Lagrangian discretization, see Lemma~\ref{lem:SL_trunc}.
\end{lemma}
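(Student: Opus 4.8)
The plan is to obtain the truncation error of the ideal operator $\Psi_{\rm ideal} = \Phi^m$ by accumulating the single-step method-of-lines error from Lemma~\ref{lem:MOL-trunc} over $m$ steps, and then to re-express the result as a perturbation of the coarse semi-Lagrangian operator ${\cal S}_p^{(m \delta t)}$ via the semi-Lagrangian error from Lemma~\ref{lem:SL_trunc}. Throughout I would exploit that every operator involved (${\cal M}_{p,p}^{(\delta t)}$, ${\cal S}_p^{(m \delta t)}$, and ${\cal D}^{(p+1)}_s$) is circulant, hence they all commute and share the Fourier eigenbasis; this is what legitimizes the leading-order substitutions below.

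\emph{Accumulation step.} Writing $\bm{u}_k := \bm{u}(t_{n+k})$ and abbreviating $\kappa := c\, \eFD + (-c)^{p+1} \eRK$, Lemma~\ref{lem:MOL-trunc} with $q = p$ gives the single-step relation $\bm{u}_{k+1} - \Phi \bm{u}_k = -\kappa\, h^{p+1} \frac{{\cal D}^{(p+1)}_s}{h^{p+1}} \bm{u}_{k+1} + {\cal O}(\ldots)$. Introducing the global error $E_m := \bm{u}_m - \Phi^m \bm{u}_0$ and the telescoping identity $E_m = \sum_{k=1}^{m} \Phi^{m-k} \tau_k$ with $\tau_k := \bm{u}_k - \Phi \bm{u}_{k-1}$, I would argue that each summand reduces to the same leading term: since $\Phi$ commutes with ${\cal D}^{(p+1)}_s$, since $\Phi^{m-k} \bm{u}_k = \bm{u}_m$ up to an ${\cal O}(h^{p+1})$ global error, and since $h^{-(p+1)}{\cal D}^{(p+1)}_s$ maps an ${\cal O}(h^{p+1})$ grid function to an ${\cal O}(h^{p+1})$ grid function, one finds $\Phi^{m-k}\tau_k = -\kappa\, h^{p+1} \frac{{\cal D}^{(p+1)}_s}{h^{p+1}} \bm{u}_m + {\cal O}(h^{2p+2}, \ldots)$. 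Summing the $m$ identical leading contributions yields $\Phi^m \bm{u}_n = \bm{u}_{n+m} + m \kappa\, h^{p+1} \frac{{\cal D}^{(p+1)}_s}{h^{p+1}} \bm{u}_{n+m} + {\cal O}(\ldots)$, i.e., the ideal operator carries $m$ times the single-step leading error. Equivalently, this step can be done in Fourier space by raising the eigenvalue estimate to the power $m$ with the binomial expansion, exactly as in the proof of Theorem~\ref{thm:MOL-rho-lwr-bnd}.

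\emph{Rewriting step.} Lemma~\ref{lem:SL_trunc} at time step $m \delta t$ gives ${\cal S}_p^{(m \delta t)} \bm{u}_n = \bm{u}_{n+m} - (-1)^{p+1} f_{p+1}\big(\varepsilon^{(m \delta t)}\big)\, h^{p+1} \frac{{\cal D}^{(p+1)}_s}{h^{p+1}} \bm{u}_{n+m} + {\cal O}(h^{p+2})$. Substituting this to eliminate $\bm{u}_{n+m}$ from the accumulation identity, and using once more that ${\cal D}^{(p+1)}_s$ acting on the ${\cal O}(h^{p+1})$ discrepancy between $\bm{u}_{n+m}$ and ${\cal S}_p^{(m \delta t)} \bm{u}_n$ produces only ${\cal O}(h^{2p+2})$ terms, the two ${\cal D}^{(p+1)}_s$ contributions combine with coefficient $m\kappa + (-1)^{p+1} f_{p+1}\big(\varepsilon^{(m \delta t)}\big)$, which is exactly $\varphi^{(m \delta t)}_{p+1}$ of \eqref{eq:varphi-constant-MOL-SL}. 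This gives the explicit form \eqref{eq:Psi_ideal-SL_redisc-pert0}. The implicit form \eqref{eq:Psi_ideal-SL_redisc-pert} then follows from the Neumann expansion $\big(I - \varphi^{(m \delta t)}_{p+1} {\cal D}^{(p+1)}_s\big)^{-1} = I + \varphi^{(m \delta t)}_{p+1} {\cal D}^{(p+1)}_s + {\cal O}(h^{2p+2})$, since the quadratic and higher terms, applied to ${\cal S}_p^{(m \delta t)} \bm{u}_n$, lie beyond the retained order.

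The main obstacle is the accumulation step: rigorously establishing that the $m$ per-step truncation errors add to exactly $m\kappa$ at leading order while all propagation-induced corrections stay strictly higher order. The clean device is the commuting/circulant structure together with the mapping property that $h^{-(p+1)}{\cal D}^{(p+1)}_s$ sends ${\cal O}(h^{p+1})$ grid functions to ${\cal O}(h^{p+1})$ grid functions, so any product of a prefactor $h^{p+1}$ with such a term drops to ${\cal O}(h^{2p+2})$ and is absorbed. Carefully tracking this order bookkeeping—and confirming the residual matches the stated ${\cal O}\big(h^{p+2}, h^{p+1}\delta t, h \delta t^{p+1}, \delta t^{p+2}\big)$ inherited from Lemmas~\ref{lem:SL_trunc} and~\ref{lem:MOL-trunc}—is the only genuinely delicate part; the final combination of coefficients is then pure algebra.
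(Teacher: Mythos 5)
Your proposal is correct and follows essentially the same route as the paper's proof: the paper accumulates the one-step error of Lemma~\ref{lem:MOL-trunc} over $m$ steps by repeatedly multiplying the rearranged identity \eqref{eq:Mpp_trunc-rearranged} by ${\cal M}^{(\delta t)}_{p,p}$ and inducting (your telescoping sum $E_m = \sum_k \Phi^{m-k}\tau_k$ with circulant commutativity is the same computation organized differently), then substitutes the geometric-expansion form of Lemma~\ref{lem:SL_trunc} at step size $m\delta t$ to trade $\bm{u}(t_{n+m})$ for ${\cal S}_p^{(m\delta t)}\bm{u}(t_n)$, and obtains \eqref{eq:Psi_ideal-SL_redisc-pert} from \eqref{eq:Psi_ideal-SL_redisc-pert0} by exactly the Neumann expansion you describe. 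Your order bookkeeping, including discarding ${\cal D}^{(p+1)}_s{\cal D}^{(p+1)}_s$ cross terms as ${\cal O}(h^{2(p+1)})$ on smooth data, matches the paper's.
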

\begin{proof}
See Appendix~\ref{app:Psi_ideal_MOL-pert-proof}.
\qed
\end{proof}

The second term on the right-hand side of \eqref{eq:Psi_ideal-SL_redisc-pert0} is of size ${\cal O}(h^{p+1})$, since ${\cal D}^{(p+1)}_s$ and ${\cal S}_p^{(m \delta t)}$ commute, \ptxt{and ${\cal D}^{(p+1)}_s\bm{u}(t_{n}) = {\cal O}(h^{p+1})$.}
Thus, applied to $\bm{u}(t_n)$, the ideal coarse-grid operator and the semi-Lagrangian discretization are consistent up to terms of size ${\cal O}(h^{p+1})$, which makes sense given they are both $p$th-order accurate discretizations of the same PDE.
As stated above, however, this consistency alone does not produce satisfactory MGRIT convergence when using $\Psi = {\cal S}_p^{(m \delta t)}$.
Instead, improved MGRIT convergence necessitates that we correct the semi-Lagrangian discretization to increase its accuracy with respect to $\Psi_{\rm ideal}$.
Thus, based on the estimate \eqref{eq:Psi_ideal-SL_redisc-pert}, we propose the following coarse-grid operator for this purpose:
\begin{align} \label{eq:MOL-SL-corrected}
\Psi = \Big( I - \varphi^{(m \delta t)}_{p+1} {\cal D}^{(p+1)}_s \Big)^{-1} {\cal S}_p^{(m \delta t)}.
\end{align}

As mentioned in Footnote~\ref{ftn:ERK1+U1-SL-equiv}, for the constant-wave-speed advection problem, ERK1+U1 is equivalent to ${\cal S}_1^{(\delta t)}$ if $c < 1$. 
Thus, when $\Phi = $ ERK1+U1 and $c < 1$, the coarse-grid operator \eqref{eq:MOL-SL-corrected} is equivalent to that of \eqref{eq:SL-redisc-corrected} when $\Phi = {\cal S}_1^{(\delta t)}$ and $c < 1$ (i.e., although the formulae for the correction coefficient in \eqref{eq:MOL-SL-corrected} and \eqref{eq:SL-redisc-corrected} are different in general, for this special case they produce the same values). 

Applying the coarse-grid operator \eqref{eq:MOL-SL-corrected} requires solving a linear system, \ptxt{potentially} making it substantially more expensive than a standalone semi-Lagrangian discretization \ptxt{(depending on how accurately the linear system is solved)}. 
\ptxt{In the forthcoming numerical tests, depending on the test, this system is solved directly with UMFPACK \cite{Davis2004} from SuiteSparse, or it is solved efficiently and approximately with a small number of GMRES iterations (see also \cite{DeSterck_etal_2022}, \cite[Chap. 4]{KrzysikThesis2021} for more details on this in the context of fine-grid semi-Lagrangian discretizations).
}

\begin{remark}[Explicit versus implicit-explicit coarse-grid operator]
From \eqref{eq:Psi_ideal-SL_redisc-pert0}, notice that a more straightforward, and less expensive, coarse-grid operator than \eqref{eq:MOL-SL-corrected} which still applies the truncation error correction is 
$\Psi = \big( I + \varphi^{(m \delta t)}_{p+1} {\cal D}^{(p+1)}_s \big) {\cal S}_p^{(m \delta t)}$. 
However, this coarse-grid operator is unstable (i.e., $\Vert \Psi \Vert_2 > 1$) for sufficiently large $m$, and thus cannot serve as a suitable coarse-grid operator.
(The specific value of $m$ for which instability arises depends on problems parameters such as the CFL number, discretization order, etc. For example, \ptxt{in the case where a fine-grid semi-Lagrangian discretization is used,} it can be as small as $m = 4$ in some cases, see \cite[Sec. 4.2.4]{KrzysikThesis2021}.)
\end{remark}

\begin{remark}[A perturbed, implicit method-of-lines coarse-grid operator]
Considering the coarse-grid operator \eqref{eq:MOL-SL-corrected}, a natural question is why we elect to base it on a coarse-grid semi-Lagrangian discretization rather than an implicit method-of-lines discretization.
That is, rather than \eqref{eq:MOL-SL-corrected}, why not use something like \\ 
${
\big( I - \varphi^{(m \delta t)}_{p+1} {\cal D}^{(p+1)}_s \big)^{-1} {\cal M}_{p,p}^{(m \delta t)}
},
$
with $\varphi^{(m \delta t)}_{p+1}$ not given by \eqref{eq:varphi-constant-MOL-SL}, but instead chosen so as to take into account the truncation error of ${\cal M}_{p,p}^{(m \delta t)}$ rather than ${\cal S}_{p}^{(m \delta t)}$?
Our numerical tests indicate that such a coarse-grid operator yields MGRIT convergence that is much less satisfactory than that of \eqref{eq:MOL-SL-corrected}, often leading to divergence.
\end{remark}

% ---------------------------------------------------------------------------------------------- %
% ---------------------------------------------------------------------------------------------- %
\subsection{Coarse-grid operators: The multilevel setting}
\label{sec:MOL-trunc-correction-operator-multilevel}

Following \ptxt{the steps used in \cite[Sec. 3.4]{DeSterck_etal_2022} to develop coarse operators in the multilevel setting}, we generalize the two-level coarse-grid operator \eqref{eq:MOL-SL-corrected} to any coarse level $\ell > 0$ in a multilevel hierarchy.
Specifically, suppose we have a multilevel hierarchy, with level $\ell \geq 0$ employing a time-step size of $m^{\ell} \delta t$.
Let $\Phi^{(m^{\ell} \delta t)} \in \mathbb{R}^{n_x \times n_x}$ be a time-stepping operator that evolves solutions forwards in time by an amount $m^{\ell} \delta t$.\footnote{Observe the change in notation here; previously we have denoted level $\ell = 0$ and $\ell = 1$ time-stepping operators by $\Phi$ and $\Psi$, respectively. That is, in terms of our earlier notation $\Phi \equiv \Phi^{(\delta t)}$ and $\Psi \equiv \Phi^{(m \delta t)}$.}
Then, we propose the coarse-grid operator
\begin{align} \label{eq:MOL-SL-corrected-multilevel}
\Phi^{(m^{\ell} \delta t)} 
= 
\Big( 
I - \varphi^{(m^{\ell} \delta t)}_{p+1} {\cal D}^{(p+1)}_s 
\Big)^{-1} {\cal S}_p^{(m^{\ell} \delta t)}, 
\quad
\ell = 1, 2, \ldots,
\end{align}
with level-dependent correction coefficient $\varphi^{(m^{\ell} \delta t)}_{p+1}$ defined recursively by
\begin{align}
\varphi^{(m^{\ell} \delta t)}_{p+1}
=
\begin{cases}
m
\left[
c  
\mkern 1mu % This inserts a very small space.
\eFD 
+
(-c)^{p+1} \eRK 
\right]
+
(-1)^{p+1} f_{p+1} \big( \varepsilon^{(m \delta t)} \big), 
\quad &\ell = 1,
\\[2ex]
(-1)^{p+1} 
\Big[ 
- 
m f_{p+1} \big( \varepsilon^{(m^{\ell - 1} \delta t)} \big) 
+
f_{p+1} \big( \varepsilon^{(m^{\ell} \delta t)} \big) 
\Big]
+
m \varphi^{(m^{\ell - 1} \delta t)}_{p+1}, 
\quad 
&\ell > 1.
\end{cases}
\end{align}

% ---------------------------------------------------------------------------------------------- %
% ---------------------------------------------------------------------------------------------- %
%\clearpage
\subsection{Numerical results: Dissipative discretizations (odd $p$)}
\label{sec:MOL-trunc-correction-num-results-diss}

We now examine numerically the effectiveness of the coarse-grid operators developed in Sections~\ref{sec:MOL-trunc-correction-operator}~and~\ref{sec:MOL-trunc-correction-operator-multilevel}.
Results for dissipative discretizations ($p$ odd), in the sense of Definition~\ref{def:diss_vs_disp}, are given in this section, while results for dispersive discretizations ($p$ even) are given next in Section~\ref{sec:MOL-trunc-correction-num-results-disp}.

The numerical tests in this section take the differentiation matrix ${\cal D}^{(p+1)}_s$ (see \eqref{eq:Dp+1_def}) in the coarse-grid operators \eqref{eq:MOL-SL-corrected} and \eqref{eq:MOL-SL-corrected-multilevel} as a symmetric, second-order accurate discretization \ptxt{(i.e., $s = 2$)}.
\ptxt{
For SDIRK tests, all coarse-grid linear systems arising from the application of \eqref{eq:MOL-SL-corrected} and \eqref{eq:MOL-SL-corrected-multilevel} are solved directly because the cost of doing so is not large relative to the direct linear solves carried out during time-stepping on the finest grid. 
For two-level ERK tests, the linear systems are solved directly so as to clearly understand best-case MGRIT convergence, while for multilevel tests they are solved approximately on all levels with GMRES.
GMRES is iterated until the $\ell^2$-norm of the relative residual has decreased by at least two orders of magnitude, or until a maximum number of iterations is reached (10 for ERK1+U1, and 20 for ERK3+U3 and ERK5+U5). These GMRES halting criteria are chosen so that MGRIT convergence does not deteriorate relative to when a direct solver is used; however, they have not been fully optimized to minimize the number of GMRES iterations.}

\ptxt{Numerical results are split across several sections. First we focus on MGRIT convergence (Section \ref{sec:dissipative-convergence}), followed by parallel scaling studies (Section \ref{sec:parallel}), and then more detailed investigations of some interesting convergence results for specific cases (Sections \ref{sec:CFL-limit-deterioration} and \ref{sec:F-vs-FCF}).}
%

% ---------------------------------------------------------------------------------------------- %
% ---------------------------------------------------------------------------------------------- %
\subsubsection{\ptxt{MGRIT convergence study}}
\label{sec:dissipative-convergence}

% These figures were created with the script: /Users/oliverkrzysik/Library/CloudStorage/OneDrive-UniversityofWaterloo/projects/pit_advection/code/MOL_SL_trunc_correction/constant/CFL_conv_tests/paper-results/rho_vs_CFL_plots.m
\begin{figure}[b!]
\centerline{
\includegraphics[scale=0.32]{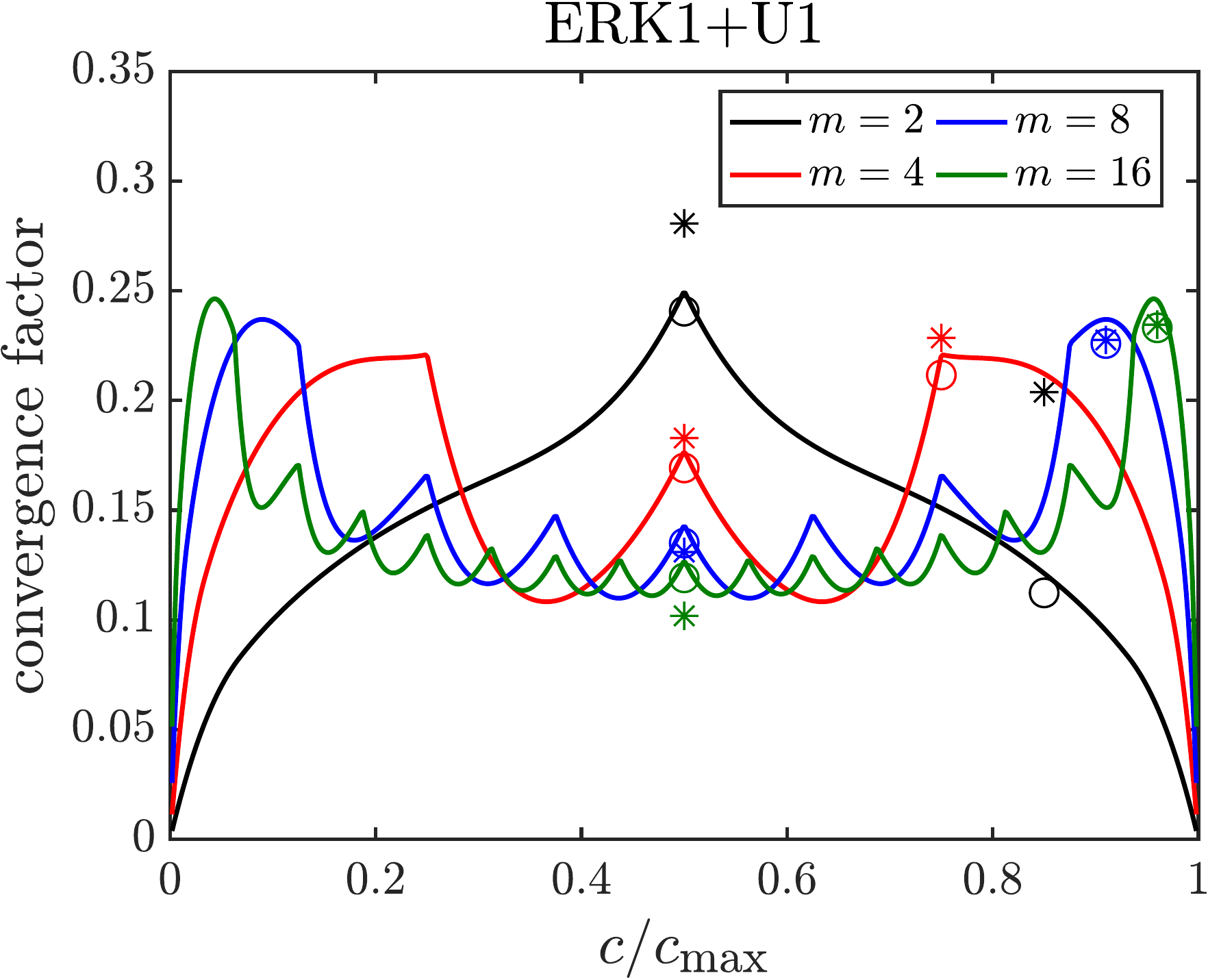}
\quad
\includegraphics[scale=0.32]{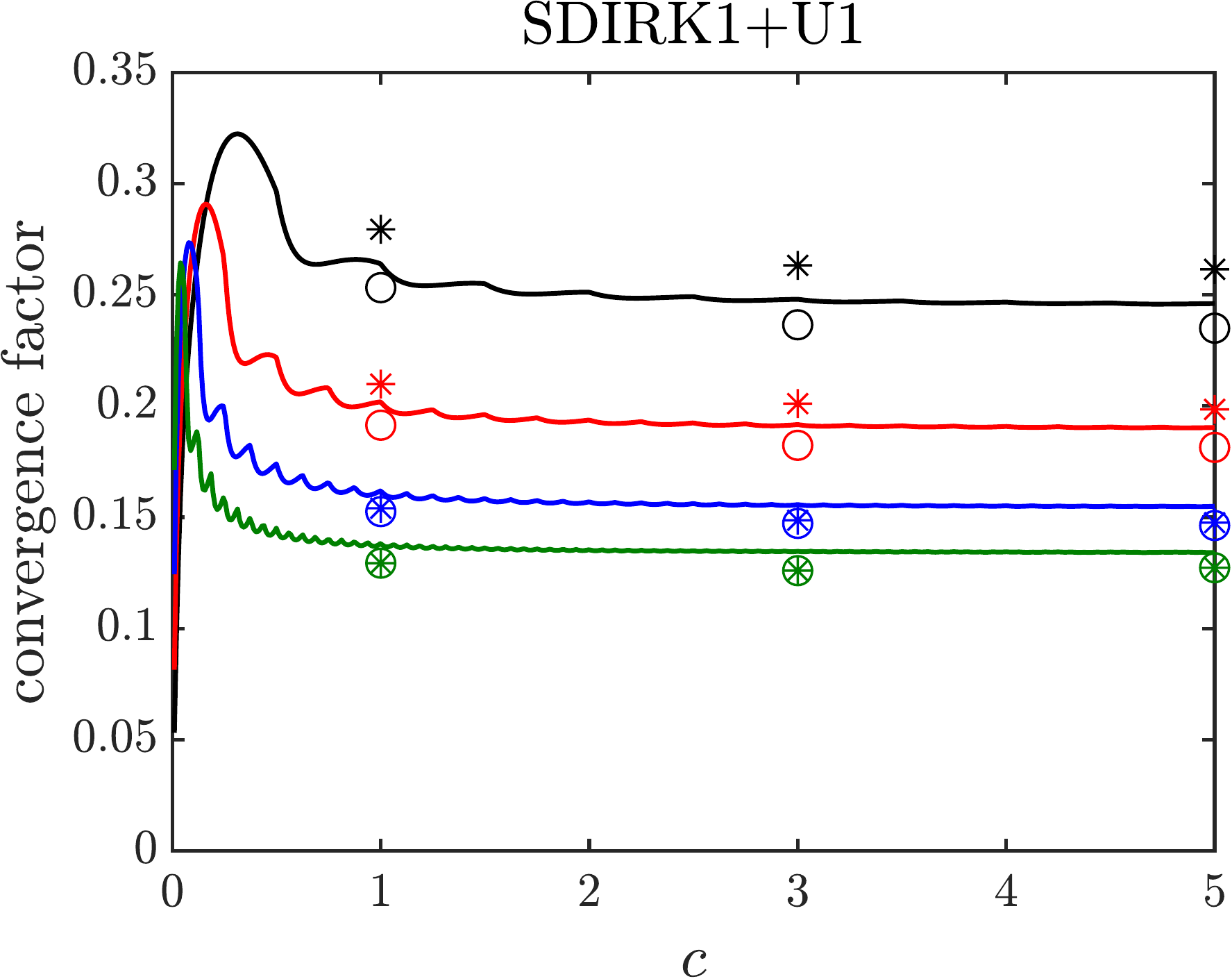}
}
\vspace{2ex}
\centerline{
\includegraphics[scale=0.32]{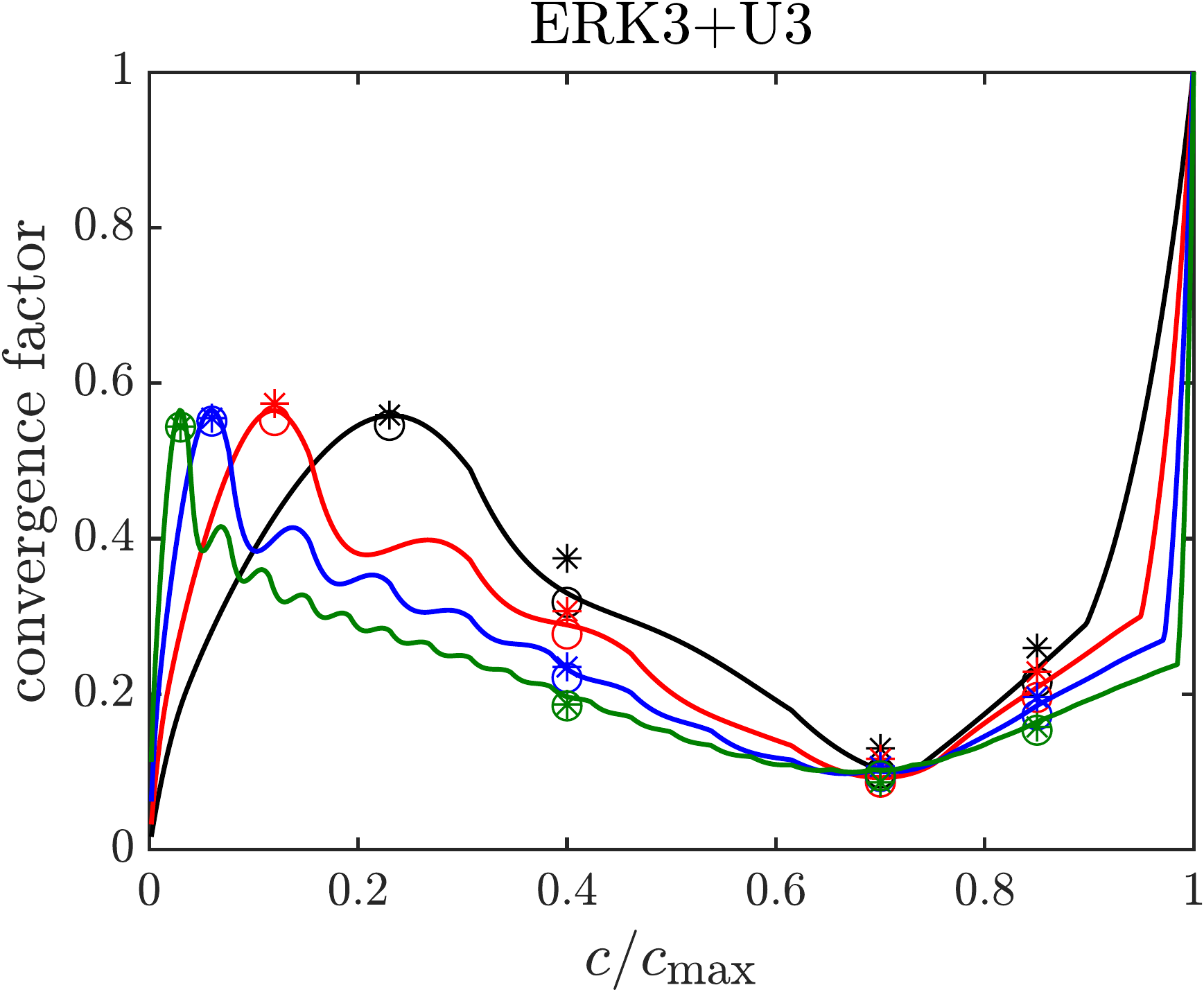}
\quad
\includegraphics[scale=0.32]{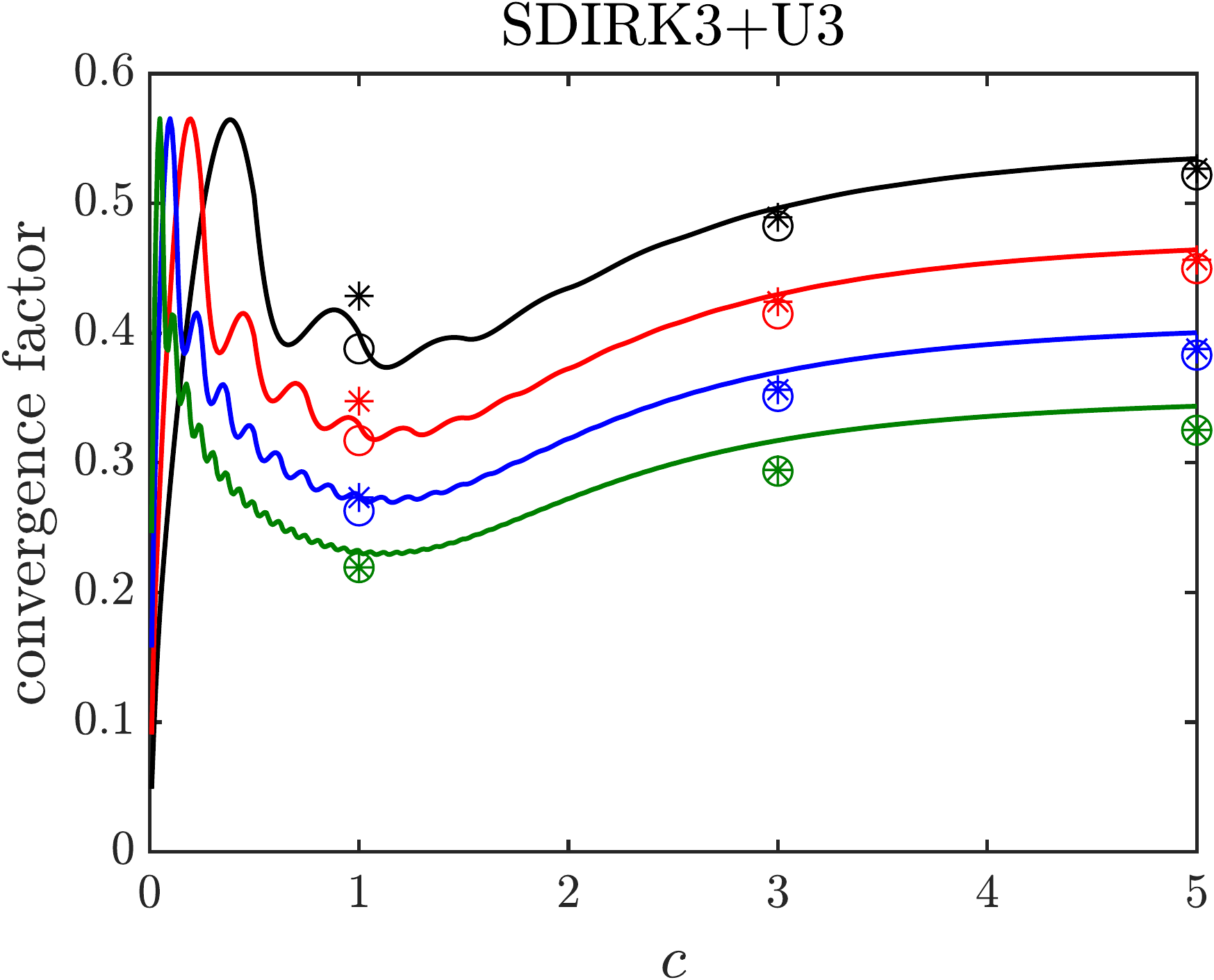}
}
\vspace{2ex}
\centerline{
\includegraphics[scale=0.32]{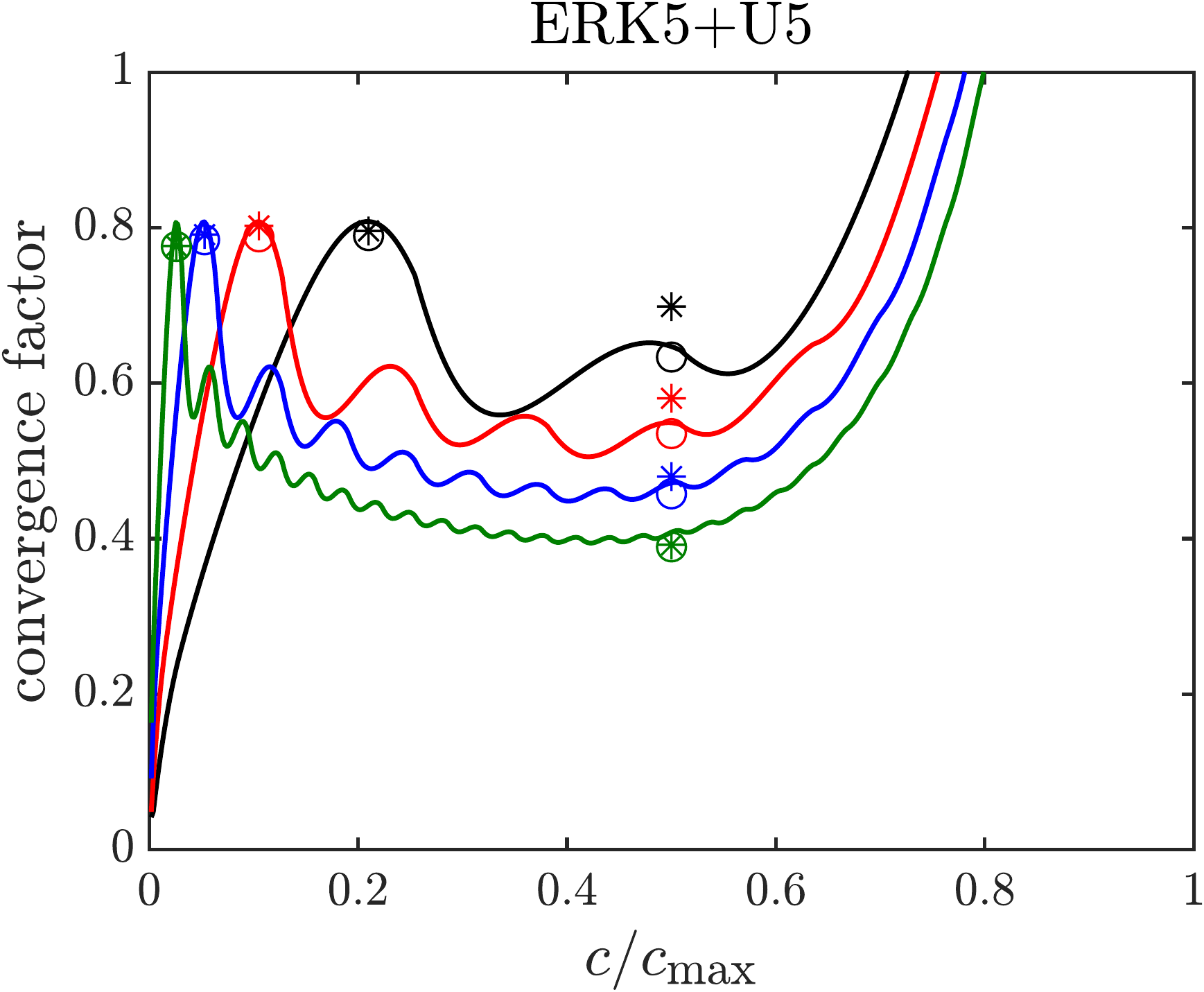}
\quad
\includegraphics[scale=0.32]{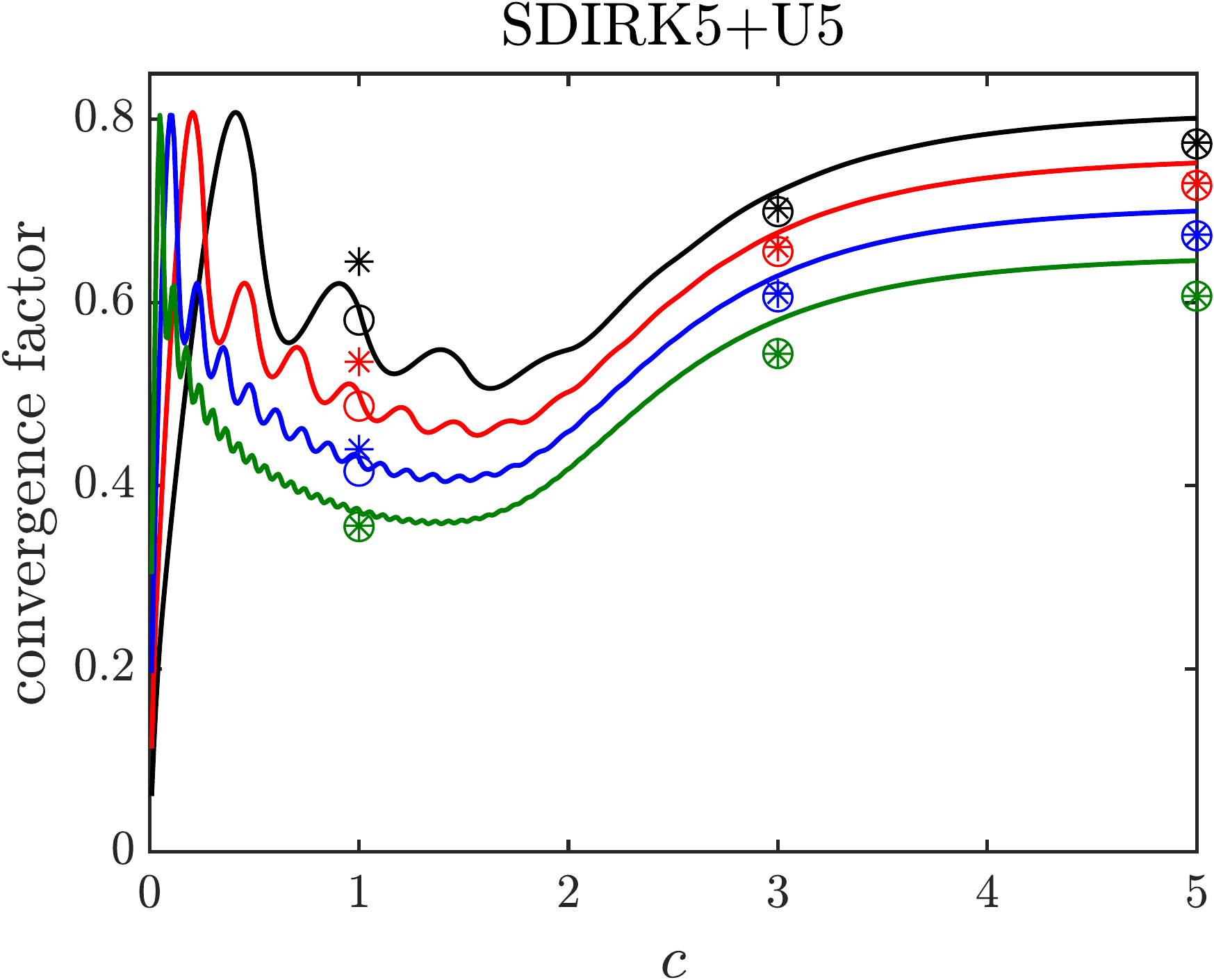}
}
\caption{
Dissipative discretizations:
Convergence factors for method-of-lines discretizations as a function of the fine-grid CFL number $c$ when using the corrected semi-Lagrangian operator \eqref{eq:MOL-SL-corrected} on coarse levels. 
Left: Explicit discretizations. 
Right: Implicit discretizations.
MGRIT uses FCF-relaxation ($\nu = 1$), and a coarsening factor of $m$.
Solid lines are the two-level LFA convergence factor \eqref{eq:rho-asym}.
Markers are effective MGRIT convergence factors on the finite interval $t \in (0, T]$, with circles representing two-level measurements, and asterisks representing multilevel measurements.
Note the different axis scales across the plots.
\label{fig:rho_vs_CFL-MOL-SL-correct-diss}
}
\end{figure}

\ptxt{We begin by assessing the effectiveness of the coarse-grid operators by examining the two-level MGRIT convergence factor \eqref{eq:rho-asym} as a function of the fine-grid CFL number (as we did in Section~\ref{sec:MOL-redisc-numerical} for direct coarse operators).}
Plots for explicit and implicit discretizations of orders one, three, and five are given in Fig.~\ref{fig:rho_vs_CFL-MOL-SL-correct-diss}.
Notice that convergence plots for explicit discretizations use the CFL fraction rather than CFL number (Table~\ref{tab:CFL_limits}).

Markers overlaid on the plots in Fig.~\ref{fig:rho_vs_CFL-MOL-SL-correct-diss} are effective MGRIT convergence factors, as measured by numerical experiments. \ptxt{These experiments use the same MGRIT setup as described in Section \ref{sec:MOL-redisc-numerical}, except that the maximum number of iterations is increased from 20 to 30.}
Circle markers are from two-level tests, \ptxt{and asterisks from V-cycle tests,  which coarsen until fewer than two points in time would result.}

In stark contrast to the results from Section~\ref{sec:MOL-redisc-numerical} for direct coarse-grid operators (see Fig. \ref{fig:rho_vs_CFL-SDIRK-redisc}), the modified coarse-grid operators result in convergent MGRIT solvers for almost all combinations of coarsening factor and CFL number pictured.
Moreover, the convergence rates pictured are fast in many cases relative to convergence rates that are typically achievable for the two-grid solution of hyperbolic problems.
Analogously to our previous work in \cite{DeSterck_etal_2022}, when a semi-Lagrangian discretization is used on the fine grid, there is also some overall degradation in the convergence factor here as the order of the discretization is increased.
%

% The figures from which the below data is read from are at (explicit tests): /Users/oliverkrzysik/Library/CloudStorage/OneDrive-UniversityofWaterloo/projects/pit_advection/code/braid/paper-results-MOL-const/convergence-studies/explicit/figures
% and (implicit tests):
% /Users/oliverkrzysik/Library/CloudStorage/OneDrive-UniversityofWaterloo/projects/pit_advection/code/braid/paper-results-MOL-const/convergence-studies/implicit/figures
%
% Note also the iteration numbers reported below are one more than is shown in the legends of the figures I read them from (due to the fact that XBraid doesn't report the actual residual for the final iteration).
%
\renewcommand*{\arraystretch}{1.3} 
\begin{table}[b!]
\caption{
	Number of two-level MGRIT iterations to reduce the $\ell^2$-norm of the residual by 10 orders of magnitude, with corresponding V-cycle iteration counts shown in parentheses. 
	MGRIT uses FCF-relaxation, and the modified coarse-grid operator \eqref{eq:MOL-SL-corrected-multilevel}.
	The ERK3+U3 tests use $c = 0.85 c_{\rm max}$, and the SDIRK3+U3 tests use $c = 5$.
  	\label{tab:MGRIT_iters}
	}
  \centering
  \begin{tabular}{| c |  c | c | c | c | c |}  % chktex 44
\cline{2-6}
\multicolumn{1}{ c|}{}  & $n_x \times n_t$ & $m = 2$ & $m = 4$ & $m = 8$ & $m = 16$ \\\hline
\Xhline{2\arrayrulewidth}
{\multirow{3}{*}{ERK3+U3}}
 & $2^{6} \times 2^{8}$    & 13 (14)  & 12 (13) & 11 (11) & 9 (9) \\\cline{2-6}
 & $2^{8} \times 2^{10}$  & 14 (15)  & 13 (14) & 12 (13) & 11 (12) \\\cline{2-6}
 & $2^{10} \times 2^{12}$  & 14 (15)  & 14 (14) & 12 (13) & 12 (12) \\\hline
\Xhline{2\arrayrulewidth}
{\multirow{3}{*}{SDIRK3+U3}}
 & $2^{6} \times 2^{8}$    & 28 (28) & 21 (21) & 15 (15) & 9 (9) \\\cline{2-6}
 & $2^{8} \times 2^{10}$  & 29 (29) & 25 (25) & 20 (20) & 16 (16) \\\cline{2-6}
 & $2^{10} \times 2^{12}$  & 30 (30) & 25 (25) & 21 (21) & 18 (18) \\\hline
 \end{tabular}
\end{table}

To help contextualize the convergence factors in Fig.~\ref{fig:rho_vs_CFL-MOL-SL-correct-diss}, in Table~\ref{tab:MGRIT_iters} we provide the number of MGRIT iterations to reach convergence for several example problems, and also investigate scalability as a function of problem size.
Iteration counts are given as a function of total space-time degrees-of-freedom, which illustrate that the solver is scalable in that the number of iterations is (approximately) constant as the mesh is refined.

Quite remarkably, the numerical results in Fig.~\ref{fig:rho_vs_CFL-MOL-SL-correct-diss} and Table~\ref{tab:MGRIT_iters} show little, if any, deterioration in the convergence rate between two-level and multilevel solves.
Scalable V-cycle convergence for MGRIT applied to hyperbolic problems has not been achieved except in our recent works \cite{DeSterck_etal_2021,DeSterck_etal_2022}, \ptxt{with \cite{DeSterck_etal_2021} using an approach that is not practical for real problems, and \cite{DeSterck_etal_2022} only applying to (fine-grid) semi-Lagrangian schemes.}
For example, both hyperbolic method-of-lines MGRIT studies \cite{Howse_etal_2019,Hessenthaler_etal_2020} did not obtain scalable convergence even when using F-cycles, which are more expensive than V-cycles. 
%
%See \cite{Brandt_Yavneh_1993} for detailed discussion on the degradation in convergence that often occurs in the multilevel setting.
%

% ---------------------------------------------------------------------------------------------- %
% ---------------------------------------------------------------------------------------------- %
\subsubsection{\ptxt{Parallel results}}
\label{sec:parallel}

\ptxt{
We now consider the parallel performance of the proposed coarse-grid operators, with strong-scaling results given in Fig. \ref{fig:strong-scaling} for a number of problems. 
Note that we parallelize only in time as we are primarily interested in temporal scalability. 
Problem sizes and the coarsening strategy are chosen to be the same as those used for the parallel tests in \cite{DeSterck_etal_2022} (see the legend of Fig. \ref{fig:strong-scaling} for specifics).
The results were generated on Ruby, a Linux cluster at Lawrence Livermore National Laboratory consisting of 1,480 compute nodes, with 56 Intel Xeon CLX-8276L cores per node.

Fig. \ref{fig:strong-scaling} shows that speed-ups relative to sequential time-stepping are obtained for all four problems considered. Speed-ups for the implicit methods are noticeably larger than for the explicit methods; this difference occurs because the cost of the coarse-grid operator relative to the cost of the finest-grid operator is less for the implicit methods than the explicit methods, and, thus, they have greater potential for speed-ups. 
Specifically for the explicit results, the solution of the coarse-grid linear systems is a significant cost, particularly on deeper levels in the multigrid hierarchy, where the required number of GMRES iterations increases. 
We anticipate that if the GMRES halting criteria were optimized, or if a more specialized solver was used, then the speed-ups would increase further. This is something we hope to address in future work. 
Note also that the halting criterion of reducing the residual by 10 orders of magnitude oversolves these problems with respect to discretization accuracy, meaning that larger speed-ups would result from tuning the halting tolerance.
%
%With regards to the timing measurements in Fig. \ref{fig:strong-scaling}, note that the implementations of the method-of-lines and semi-Lagrangian schemes do not include any optimizations that exploit the fact that the advection problems have constant wave-speed.
%\footnote{\ptxt{The semi-Lagrangian implementation is the same that we used for variable-wave-speed problems in \cite{DeSterck_etal_2022}, and, here, specifically while time-stepping with the method-of-lines scheme on the finest level, a $p$th-order ERK method is used to locate departure points which are then stored and used to recursively compute departure points on coarse levels using the backtracking and interpolation strategy described in \cite{DeSterck_etal_2022}.} \todo{Should I just remove this footnote; is it too much detail? I guess I just want to make it clear that we really don't cut corners with the timings in terms of the wave-speed being constant.}} 
}

% These figures came from: /Users/oliverkrzysik/Library/CloudStorage/OneDrive-UniversityofWaterloo/projects/pit_advection/code/braid/paper-results-MOL-const/strong-scaling/scaling_plots.m
% scipt output for ERK runs:
% p=1: speed-up at 1024 procs over time-stepping is: 2.61. Braid iterations=12
% p=3: speed-up at 1024 procs over time-stepping is: 2.68. Braid iterations=14
% % scipt output for SDIRK runs:
% p=1: speed-up at 1024 procs over time-stepping is: 7.46. Braid iterations=12
% p=3: speed-up at 1024 procs over time-stepping is: 12.18. Braid iterations=19
\begin{figure}[b!]
\centerline{
\includegraphics[scale=0.31]{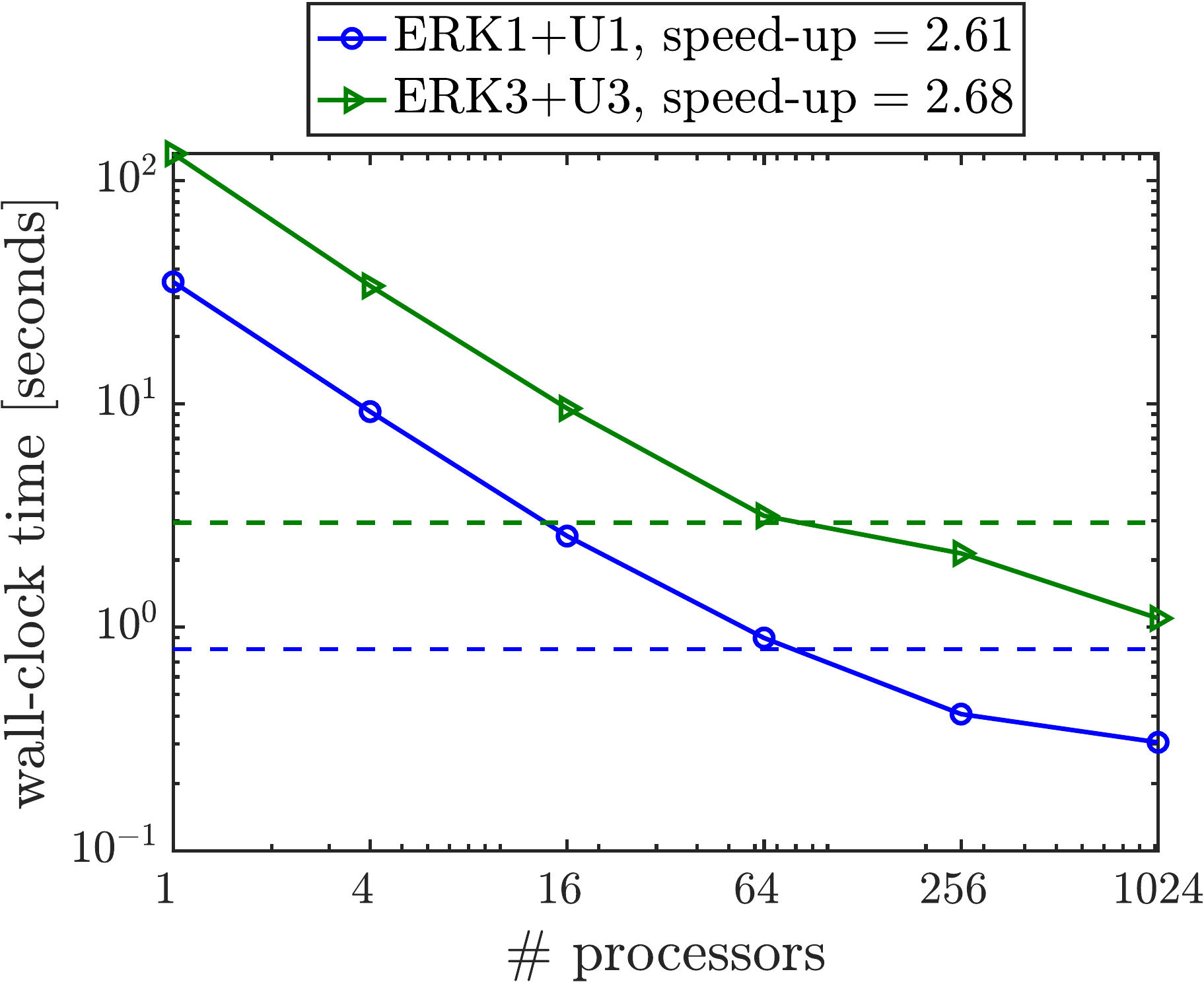}
\quad
\includegraphics[scale=0.31]{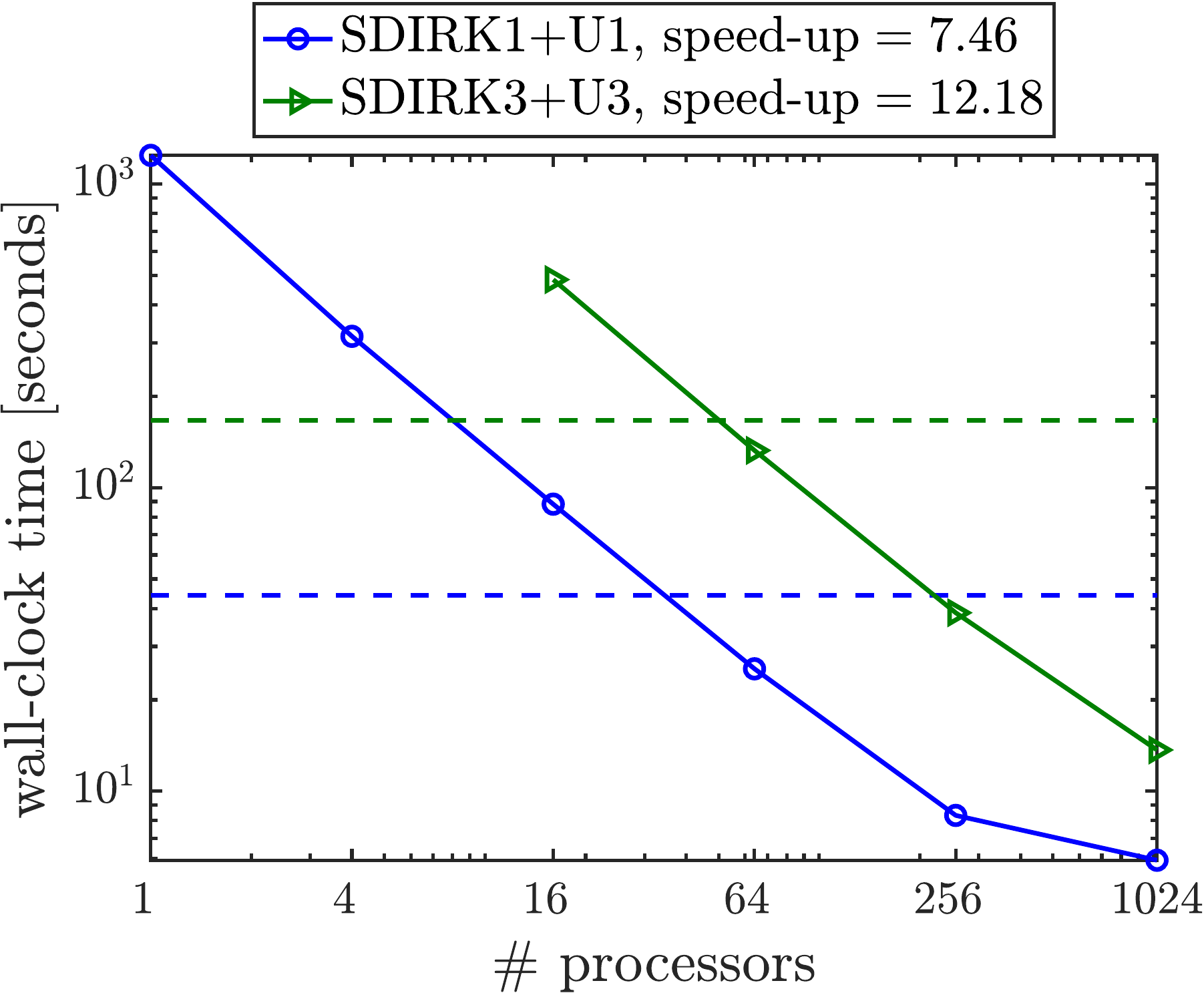}
}
\caption{
\ptxt{
Strong-scaling studies for MGRIT V-cycles applied to two explicit discretizations (left panel), and two implicit discretization (right panel).
Wall-clock times for sequential time-stepping on a single processor are given by the dashed lines; speed-ups over sequential time-stepping when using 1024 processes are shown in the legends.
MGRIT is iterated until the $\ell^2$-norm of the residual has been reduced by 10 orders of magnitude; the coarsening factor is $m = 16$ on the first level, and $m = 4$ on all coarser levels.
All problems use a space-time mesh with $n_x \times n_t = 2^{12} \times 2^{14}$ degrees-of-freedom.
The ERK1+U1 and ERK3+U3 tests use CFL numbers that are 85\% of their respective maxima, while the SDIRK1+U1 and SDIRK3+U3 tests use CFL numbers of one and five, respectively.
}
\label{fig:strong-scaling}
}
\end{figure}
%

% ---------------------------------------------------------------------------------------------- %
% ---------------------------------------------------------------------------------------------- %
\subsubsection{Slow convergence near the CFL limit for higher-order discretizations}
\label{sec:CFL-limit-deterioration}

For ERK3+U3 and ERK5+U5 (middle and bottom panels in the left column of Fig.~\ref{fig:rho_vs_CFL-MOL-SL-correct-diss}), MGRIT convergence degrades substantially as the fine-grid CFL number approaches its limit.
On one hand, this deterioration is not really a limitation because it can be avoided through using the discretization at smaller CFL numbers; on the other hand, for the fifth-order scheme this is somewhat restrictive since the deterioration sets in at around 60\% of the CFL limit. 

The reason for this degradation is that, as the CFL limit is approached for these discretizations, a set of oscillatory spatial modes emerge, with eigenvalues that approach one in magnitude.
That is, the CFL limit is determined by oscillatory modes that are otherwise relatively dissipative at smaller CFL numbers (i.e., their eigenvalues are not close to one in magnitude), as pictured in the left panel of Fig.~\ref{fig:CFL-limit-issue} for ERK3+U3.
It is the case that eigenvalues of the ideal coarse-grid operator with magnitude close to one need to be very accurately approximated by the coarse-grid operator, since relaxation cannot be used to damp the associated errors \cite{DeSterck_etal_2021,DeSterck_etal_2022_LFA}.
The issue now, however, is that our coarse-grid operator is unable to accurately capture these oscillatory modes that become important for MGRIT convergence as the CFL limit is approached, recalling that it is designed to mimic the ideal coarse-grid operator only for asymptotically smooth modes, $\omega \approx 0$.
The impact that the CFL limit has on MGRIT convergence in this scenario can be seen clearly in the right panel of Fig.~\ref{fig:CFL-limit-issue} for ERK3+U3 when $m = 4$.
%

% These figures came from: /Users/oliverkrzysik/Documents/projects/pit_advection/code/MOL_SL_trunc_correction/constant/CFL_conv_tests/rho_fixed_CFL_plots.m
% 
\begin{figure}[b!]
\centerline{
\includegraphics[scale=0.33]{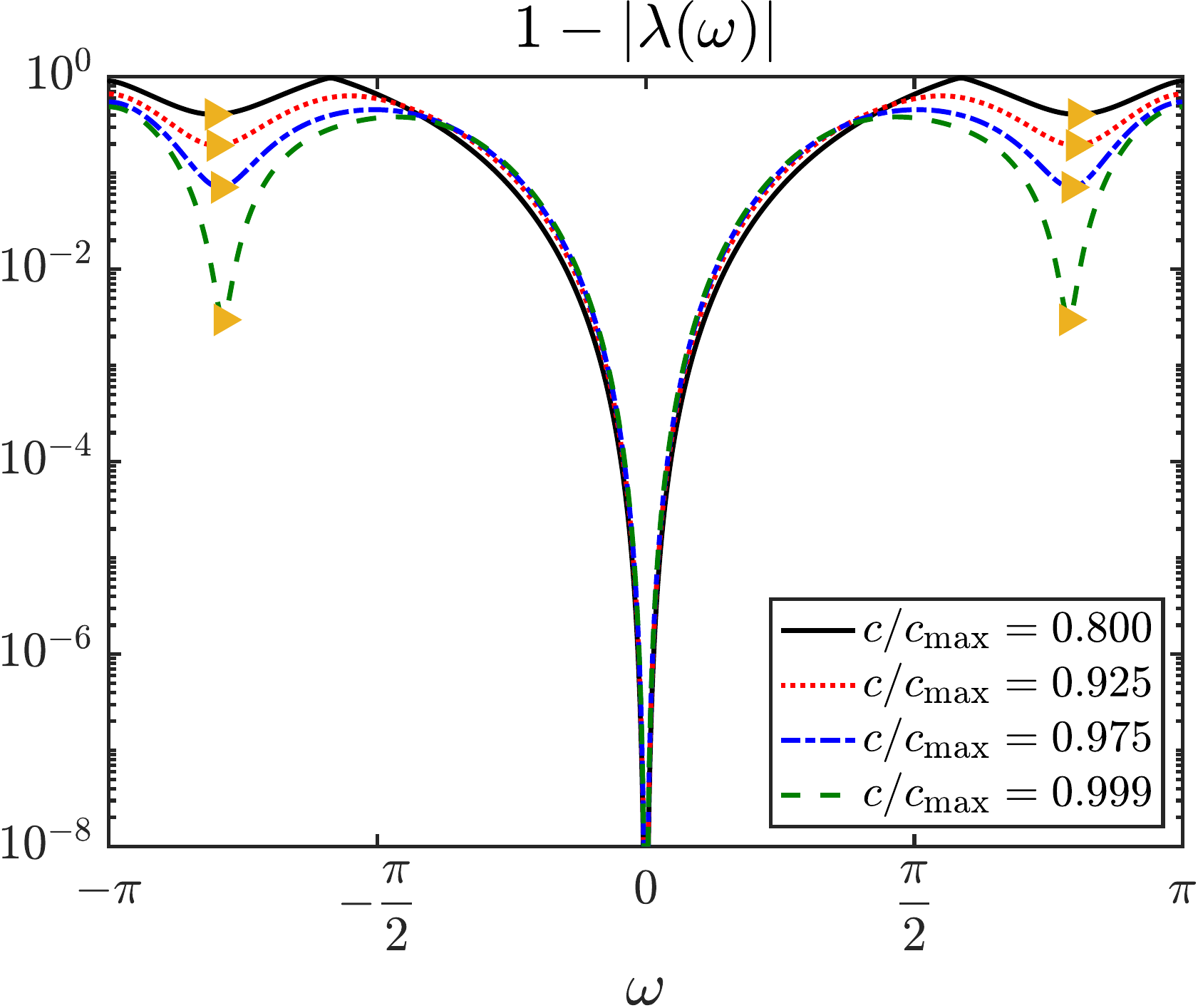}
\quad
\includegraphics[scale=0.33]{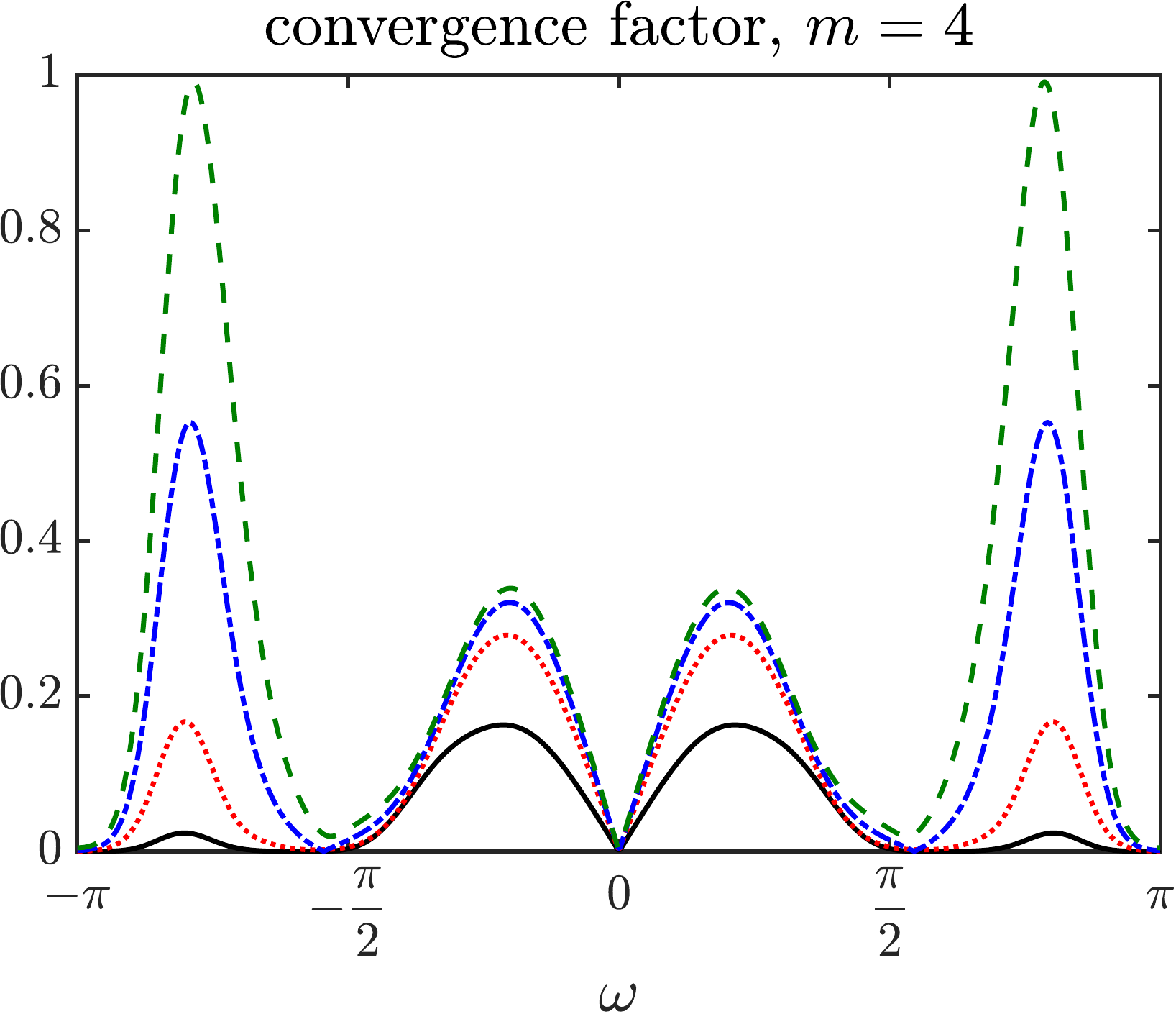}
}
\caption{
MGRIT convergence degradation for ERK3+U3 near the CFL limit.
Left: The difference between unity and the eigenvalues of the discretization as a function of the spatial frequency $\omega$ for four different CFL numbers.
The value $1 - |\lambda(\omega)|$ marked with a gold triangle approaches zero as the CFL limit is approached; this eigenvalue is responsible for setting the CFL limit of the discretization.
Right: For $m = 4$, the two-level, MGRIT convergence factor $\max_{\theta} \rho\big( \widehat{{\cal E}}(\omega, \theta) \big)$ from \eqref{eq:rho-omega-theta} as a function of $\omega$ when using the modified coarse-grid operator \eqref{eq:MOL-SL-corrected} for the four CFL numbers used in the left panel.
\label{fig:CFL-limit-issue}
}
\end{figure}

While we do not show plots for the ERK5+U5 discretization analogous to those in Fig.~\ref{fig:CFL-limit-issue}, note that the above discussion also applies to it. 
Notice, however, from Fig.~\ref{fig:rho_vs_CFL-MOL-SL-correct-diss} that the convergence degradation for ERK5+U5 begins at much smaller CFL fractions than for ERK3+U3. 
In essence, this earlier onset occurs because the coarse-grid operator provides a worse approximation to the ideal coarse-grid operator than in the ERK3+U3 case.
It is interesting to note also that the optimized \ptxt{(but not practical)} coarse-grid operator we developed in \cite{DeSterck_etal_2021} resulted in extremely fast convergence for this ERK5+U5 discretization, requiring only three or four iterations to converge for the tests considered there using $c = 0.85 c_{\rm max}$ (see right-hand side of \cite[Tab. 3]{DeSterck_etal_2021}).
\ptxt{
Unlike \eqref{eq:MOL-SL-corrected}, which only mimics the ideal coarse-grid operator for non-dissipative modes that are asymptotically smooth, the eigenvalue-matching optimization strategy in \cite{DeSterck_etal_2021} aims to ensure that the coarse-grid operator mimics the ideal coarse-grid operator over \textit{all} non-dissipative modes, and not only those which are smooth, which is likely why it works at higher CFL numbers than \eqref{eq:MOL-SL-corrected}.
}

Since a set of isolated modes are responsible for causing this significant deterioration in convergence, it is conceivable that wrapping MGRIT with an outer GMRES iteration may be able to restore fast MGRIT convergence when close to the CFL limit.
This approach would be analogous to the efforts in \cite{Oosterlee_Washio_2000} to treat smooth characteristic components in the steady state, spatial multigrid setting.
%

% ---------------------------------------------------------------------------------------------- %
% ---------------------------------------------------------------------------------------------- %
\subsubsection{F- versus FCF-relaxation}
\label{sec:F-vs-FCF}

Up until this point we have used FCF-relaxation in all of our numerical tests.
In our previous works \cite{DeSterck_etal_2021,DeSterck_etal_2022} we found that FCF-relaxation sometimes resulted in substantial improvements in convergence compared to F-relaxation.
This trend also carries over to the setting explored in this paper, as we now detail.

Fig.~\ref{fig:rho_vs_CFL-MOL-SL-correct-diss-F-relax} shows convergence factor plots just as in Fig.~\ref{fig:rho_vs_CFL-MOL-SL-correct-diss} with the key distinction that these correspond to F-relaxation and not FCF-relaxation. (For simplicity, we omit numerically measured convergence factors from Fig.~\ref{fig:rho_vs_CFL-MOL-SL-correct-diss-F-relax}, and plots for the fifth-order discretizations).
Note that the vertical axes of the plots in Fig.~\ref{fig:rho_vs_CFL-MOL-SL-correct-diss-F-relax} are not all the same as those in Fig.~\ref{fig:rho_vs_CFL-MOL-SL-correct-diss}.
It is immediately clear that FCF-relaxation provides substantial improvements in convergence compared to F-relaxation. 
For example, with F-relaxation, there are intervals of CFL numbers for the third-order discretizations for which the convergence factor is larger than unity, while with FCF-relaxation those same convergence factors are bounded uniformly by $0.6$.
%

% These figures were created with the script: /Users/oliverkrzysik/Library/CloudStorage/OneDrive-UniversityofWaterloo/projects/pit_advection/code/MOL_SL_trunc_correction/constant/CFL_conv_tests/paper-results/rho_vs_CFL_plots.m
\begin{figure}[t!]
\centerline{
\includegraphics[scale=0.32]{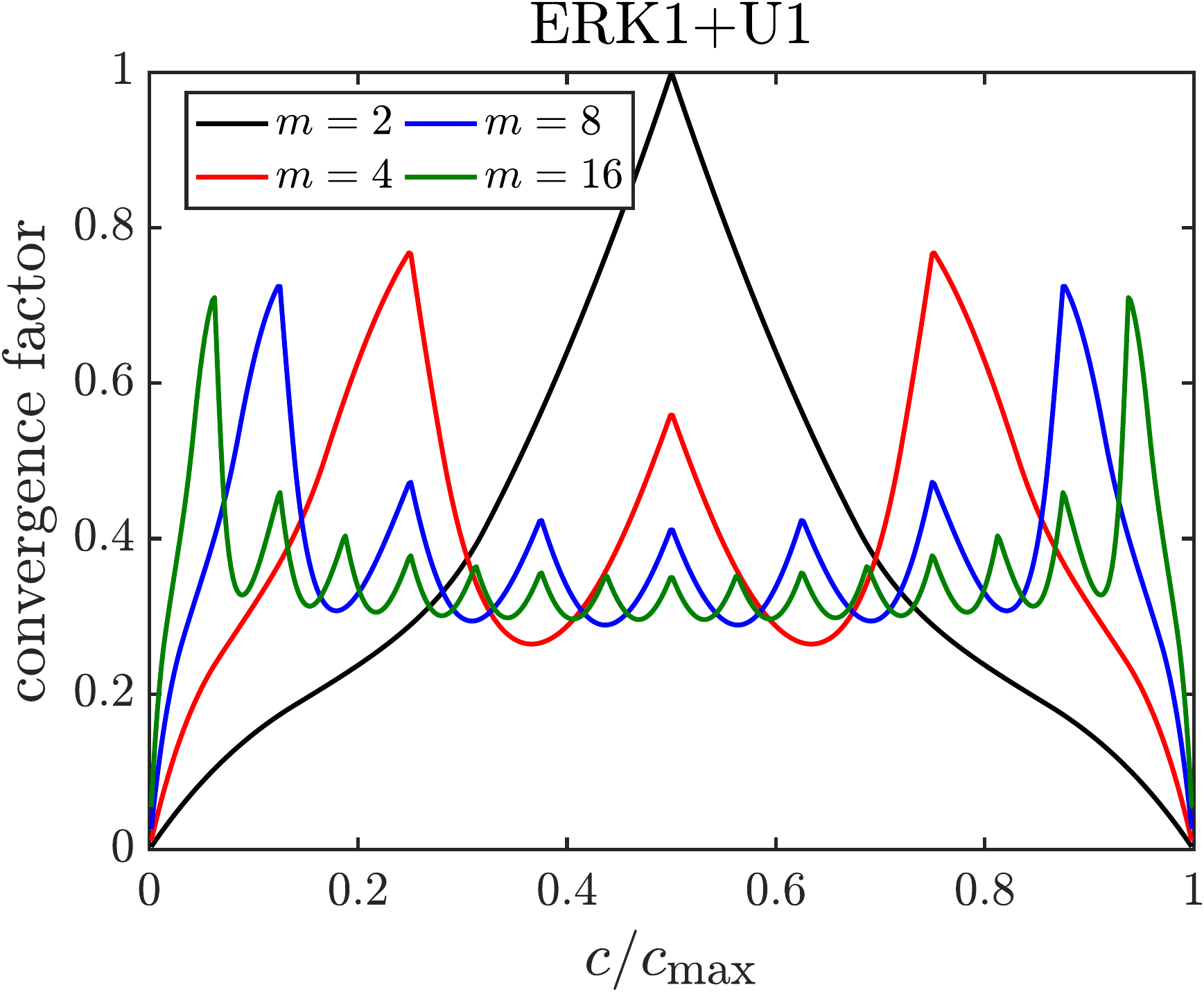}
\quad
\includegraphics[scale=0.32]{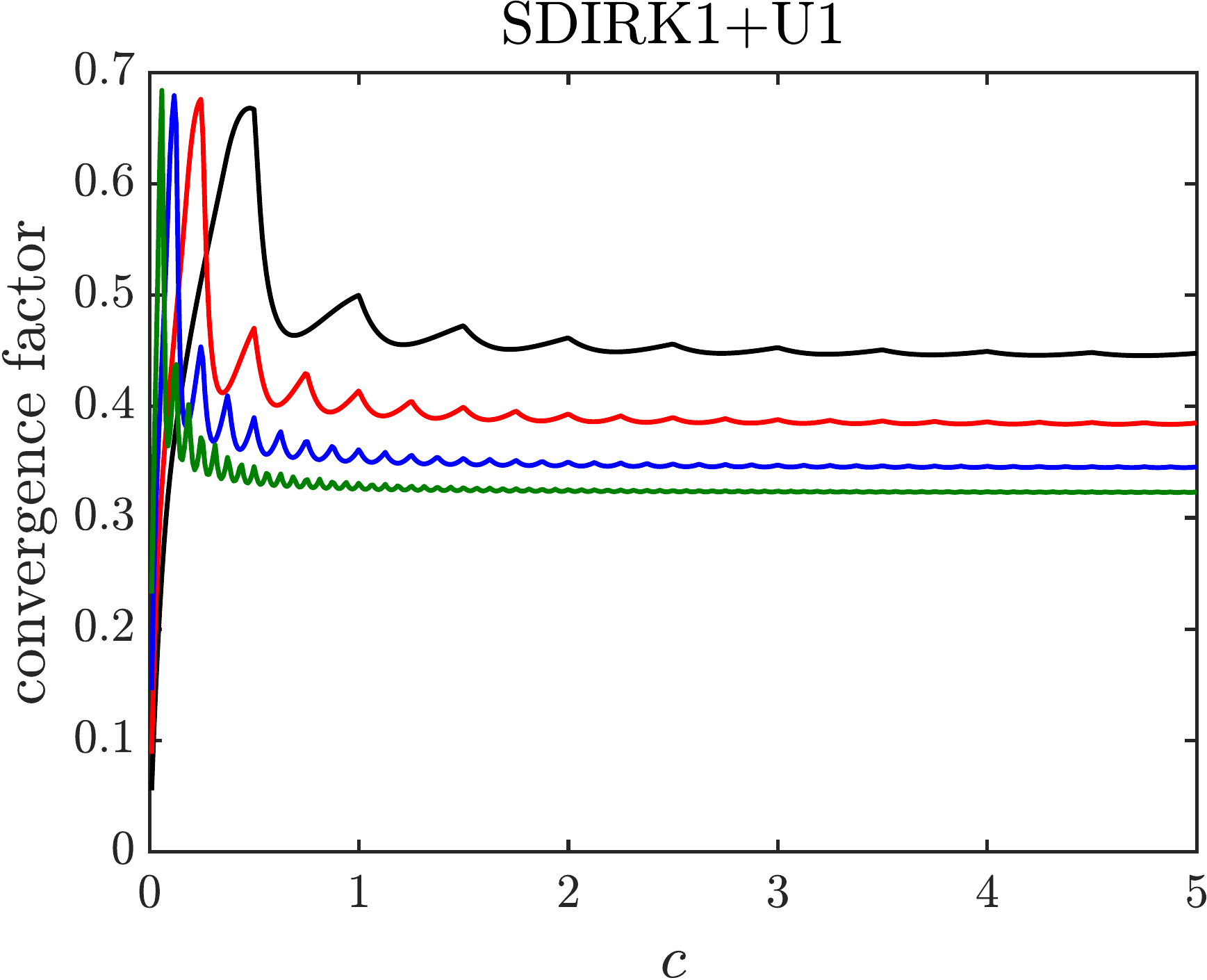}
}
\vspace{2ex}
\centerline{
\includegraphics[scale=0.32]{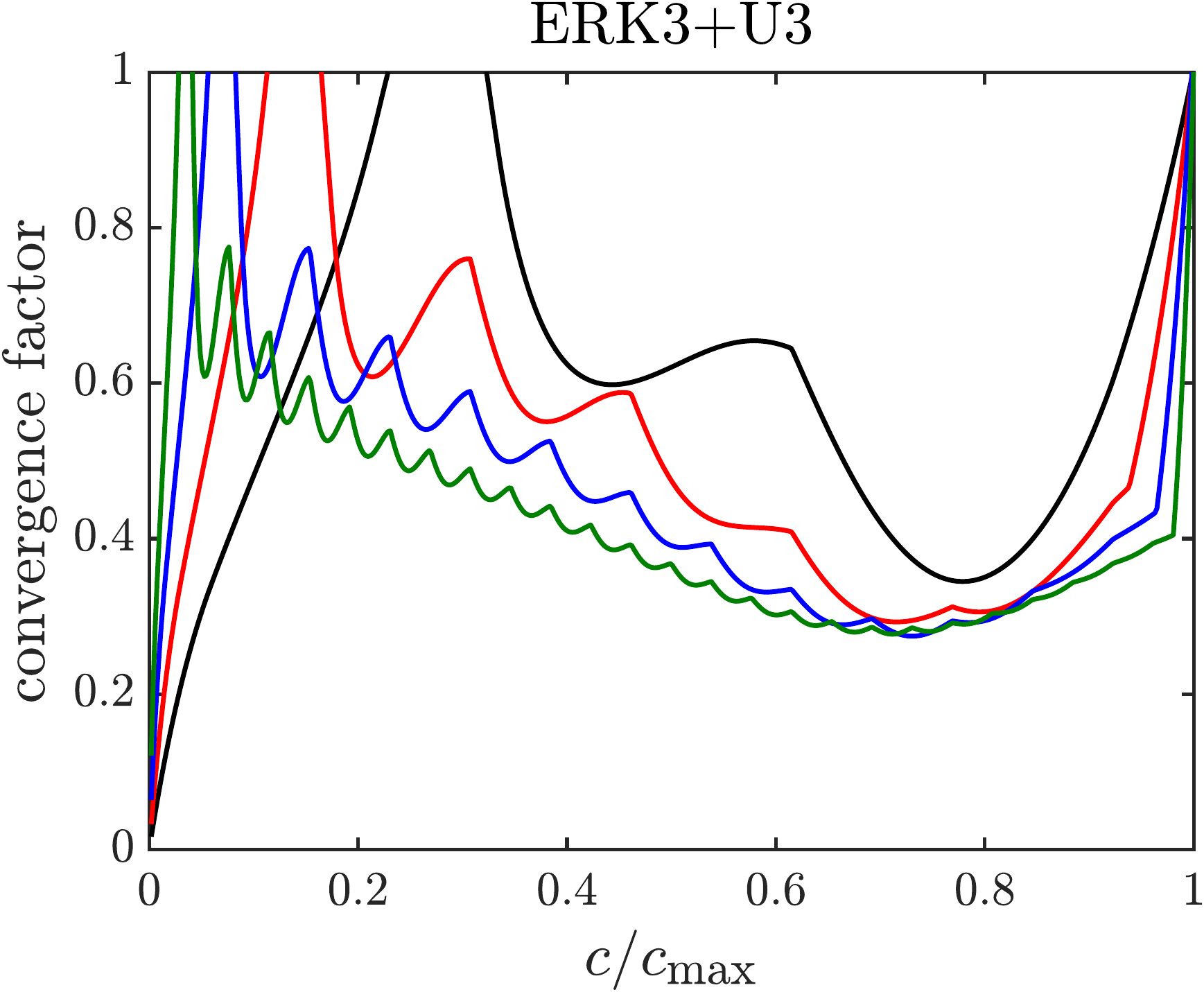}
\quad
\includegraphics[scale=0.32]{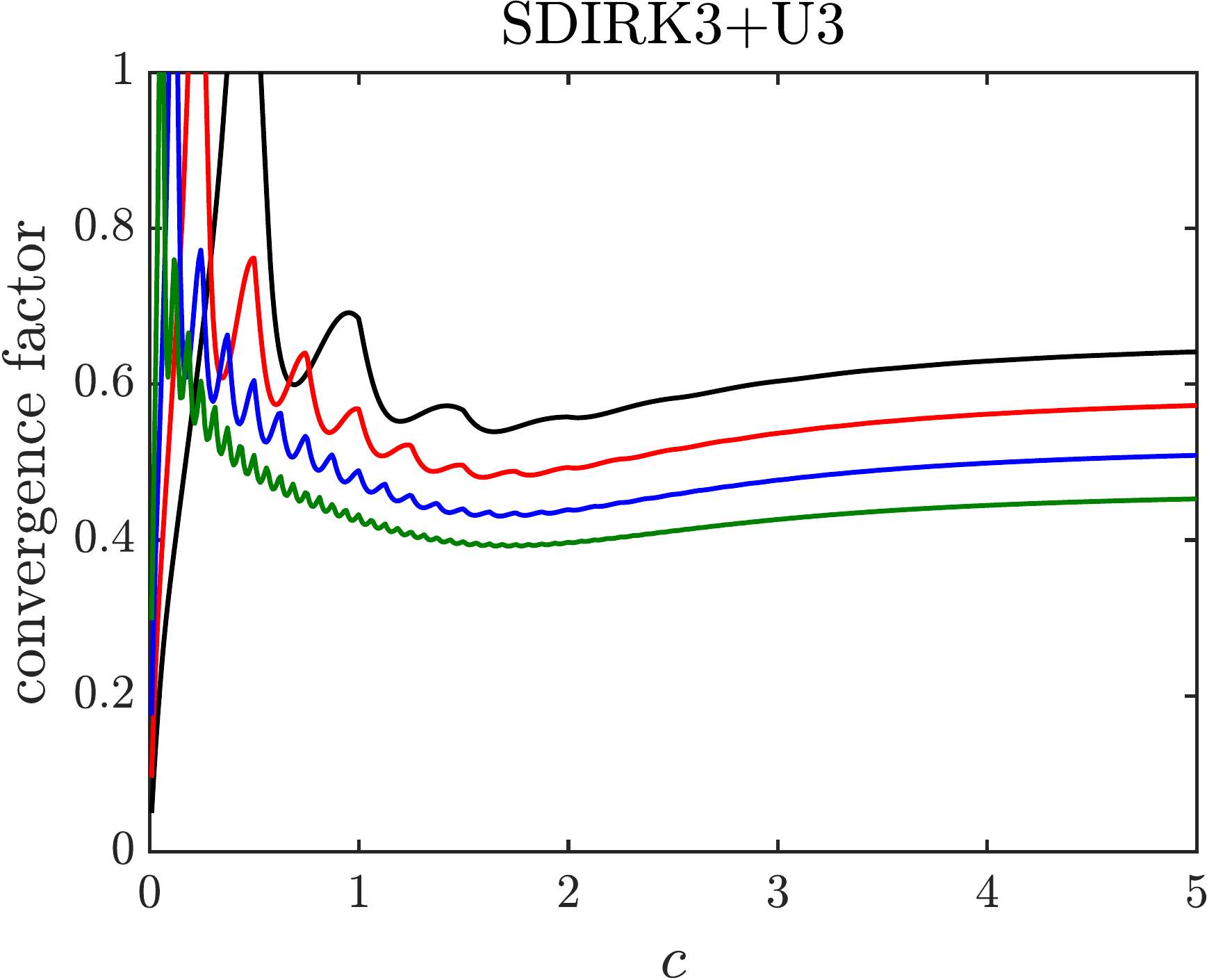}
}
\caption{
Dissipative discretizations with F-relaxation ($\nu = 0$):
Convergence factors predicted by the two-level LFA convergence factor \eqref{eq:rho-asym} for method-of-lines discretizations as a function of the fine-grid CFL number $c$ when using the corrected semi-Lagrangian discretization \eqref{eq:MOL-SL-corrected} on the coarse grid. 
Left: Explicit discretizations. Right: implicit discretizations.
MGRIT uses a coarsening factor of $m$.
Note the different axis scales across the plots.
\label{fig:rho_vs_CFL-MOL-SL-correct-diss-F-relax}
}
\end{figure}

Despite the benefits we show above for FCF-relaxation, other literature has reported little or no benefit for advection-dominated problems when using direct coarse-grid operators \cite{DeSterck_etal_2019,Hessenthaler_etal_2020}. 
Given our analysis for direct coarse-grid operators in Section~\ref{sec:MOL-redisc-analysis}, and specifically Theorem~\ref{thm:MOL-rho-lwr-bnd}, this is not surprising.
That is, in Theorem~\ref{thm:MOL-rho-lwr-bnd} we showed that a direct coarse-grid operator provides an inadequate coarse-grid correction of asymptotically smooth characteristic components, which gives rise to an unsatisfactorily large lower bound on the convergence factor, \textit{independent of the number $\nu$ of CF-relaxations} (recall that asymptotically smooth characteristic components are unaffected by relaxation).
Therefore, if the overall two-grid convergence rate is determined by the convergence of smooth characteristic components, then no improvement in convergence can be obtained by adding relaxation.
In contrast, the reason that additional relaxation benefits the coarse-grid operators presented in this paper is because the two-grid convergence rate of MGRIT is no longer controlled by the coarse-grid correction of the smoothest components, due to the fact that our coarse-grid operator is carefully designed to properly treat these modes.
Finally, we remark that stronger relaxation than FCF (i.e., $\nu \geq 2$) does not appear to lead to any significant improvement over FCF-relaxation in our tests.

% ---------------------------------------------------------------------------------------------- %
% ---------------------------------------------------------------------------------------------- %
%\clearpage
\subsection{Numerical results: Dispersive discretizations (even $p$)}
\label{sec:MOL-trunc-correction-num-results-disp}

In this section, we investigate the effectiveness of the coarse-grid operators for dispersive discretizations, in the sense of Definition~\ref{def:diss_vs_disp}.
\ptxt{As in \cite{DeSterck_etal_2022}, we find that our coarse-grid approach does not result in effective MGRIT methods for schemes with even $p$. While we do not yet fully understand why this is the case, it is interesting to look at some results in detail.}

Fig.~\ref{fig:rho_vs_CFL-MOL-SL-correct-disp} presents convergence factor plots as a function of CFL number for second- and fourth-order discretizations. 
\ptxt{MGRIT tests here use the same setup as described in Section \ref{sec:dissipative-convergence}.} 
Unfortunately, convergence is substantially worse than in the case of dissipative discretizations, see Fig.~\ref{fig:rho_vs_CFL-MOL-SL-correct-diss}.
Overall, the convergence factors are much larger than in the dissipative case; in particular, convergence factors surpass one at small CFL numbers for the implicit discretizations, despite fast convergence of implicit discretizations in the dissipative case.
Deterioration in convergence for the explicit schemes as the CFL limit is approached occurs for the same reasons as described in Section~\ref{sec:CFL-limit-deterioration} for the dissipative discretizations.
Compared to the dissipative schemes in Fig.~\ref{fig:rho_vs_CFL-MOL-SL-correct-diss}, the two-level LFA convergence factors in Fig.~\ref{fig:rho_vs_CFL-MOL-SL-correct-disp} do not seem to provide quite as sharp estimates of the effective convergence factors; however, at certain CFL numbers, our numerical tests show some small iteration growth as the mesh is refined, so the estimate may become sharp in the limit of a very fine mesh.
%

% These figures came from: 
% /Users/oliverkrzysik/Library/CloudStorage/OneDrive-UniversityofWaterloo/projects/pit_advection/code/MOL_SL_trunc_correction/constant/CFL_conv_tests/paper-results/figures
\begin{figure}[t!]
\centerline{
\includegraphics[scale=0.32]{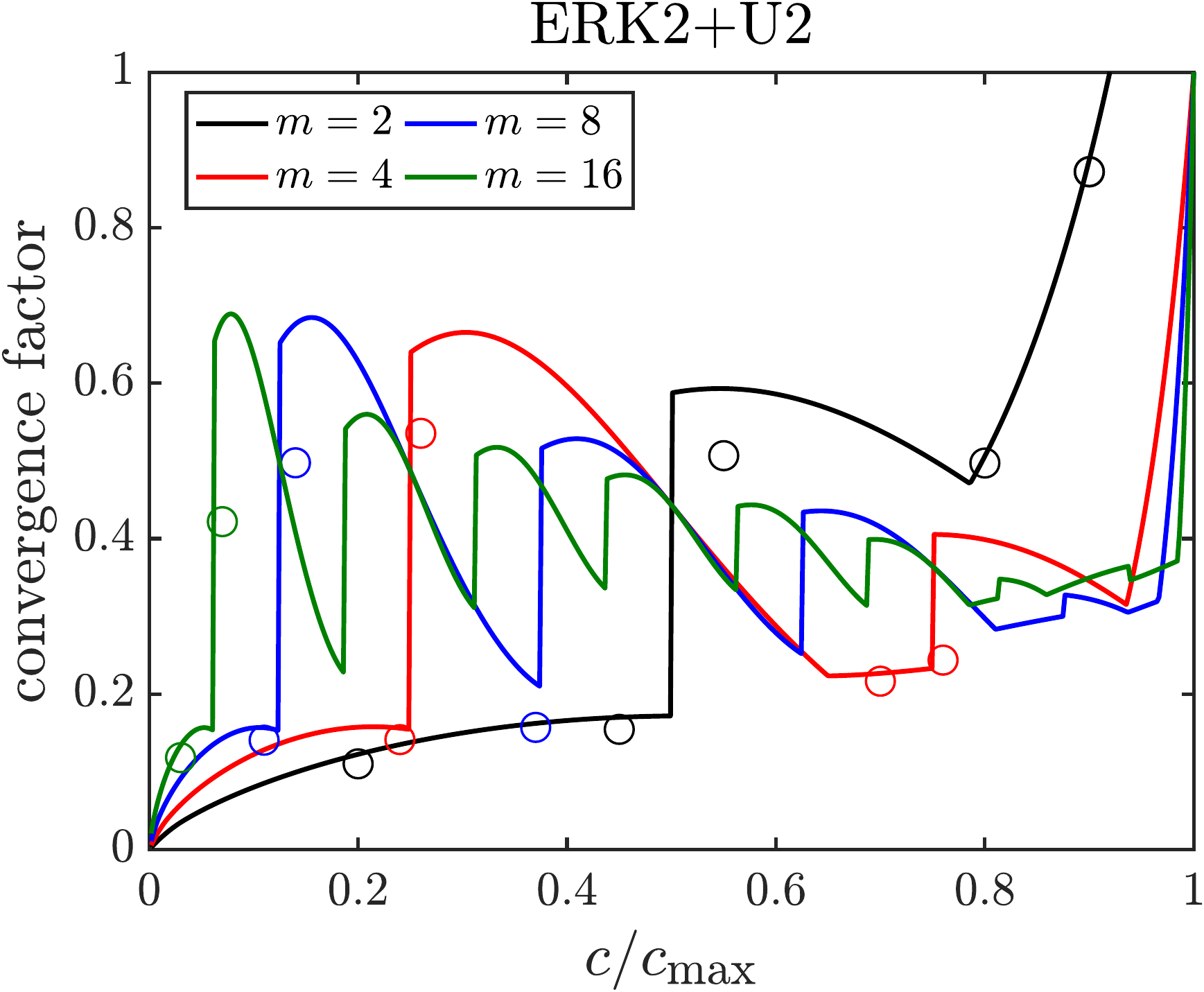}
\quad
\includegraphics[scale=0.32]{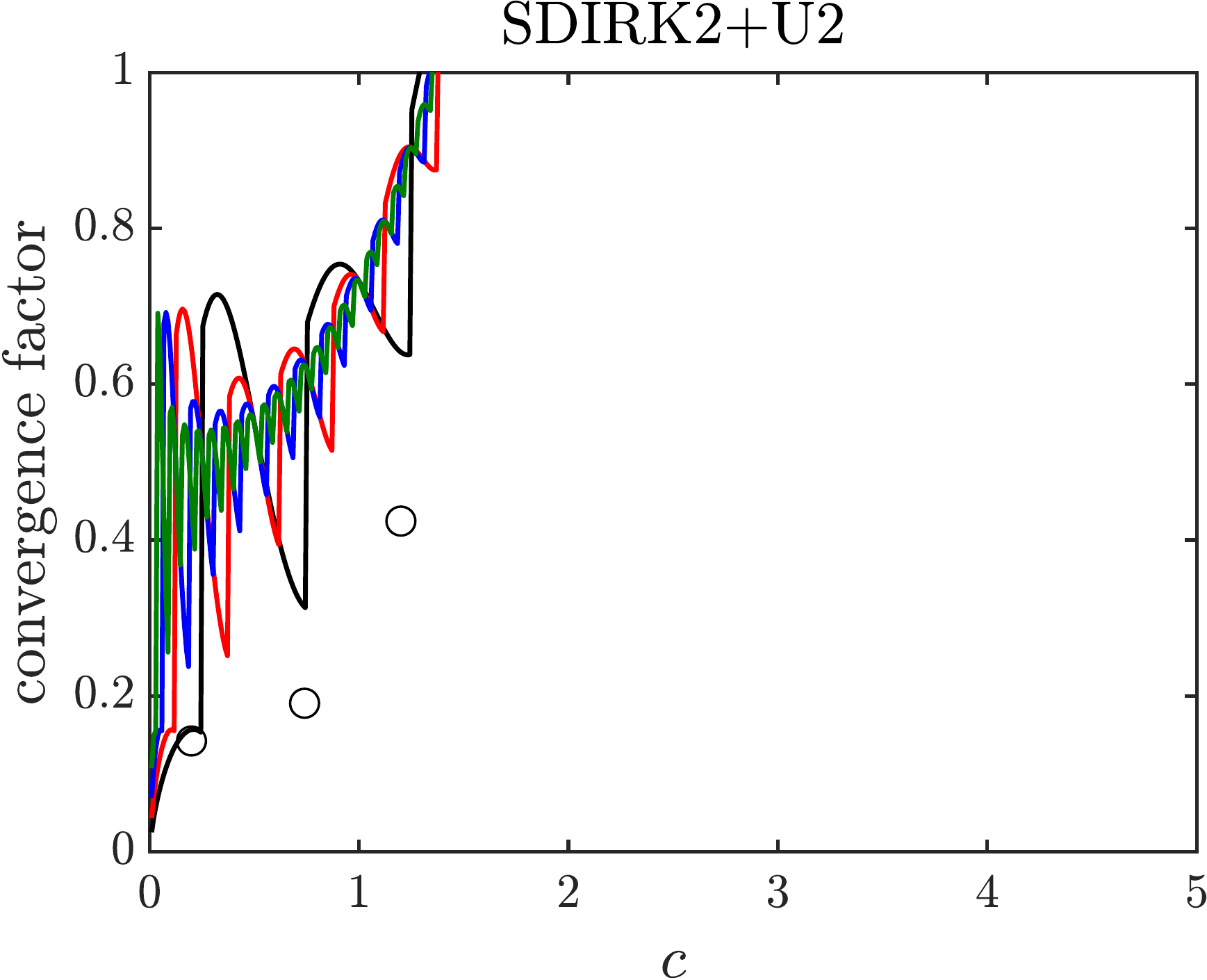}
}
\vspace{2ex}
\centerline{
\includegraphics[scale=0.32]{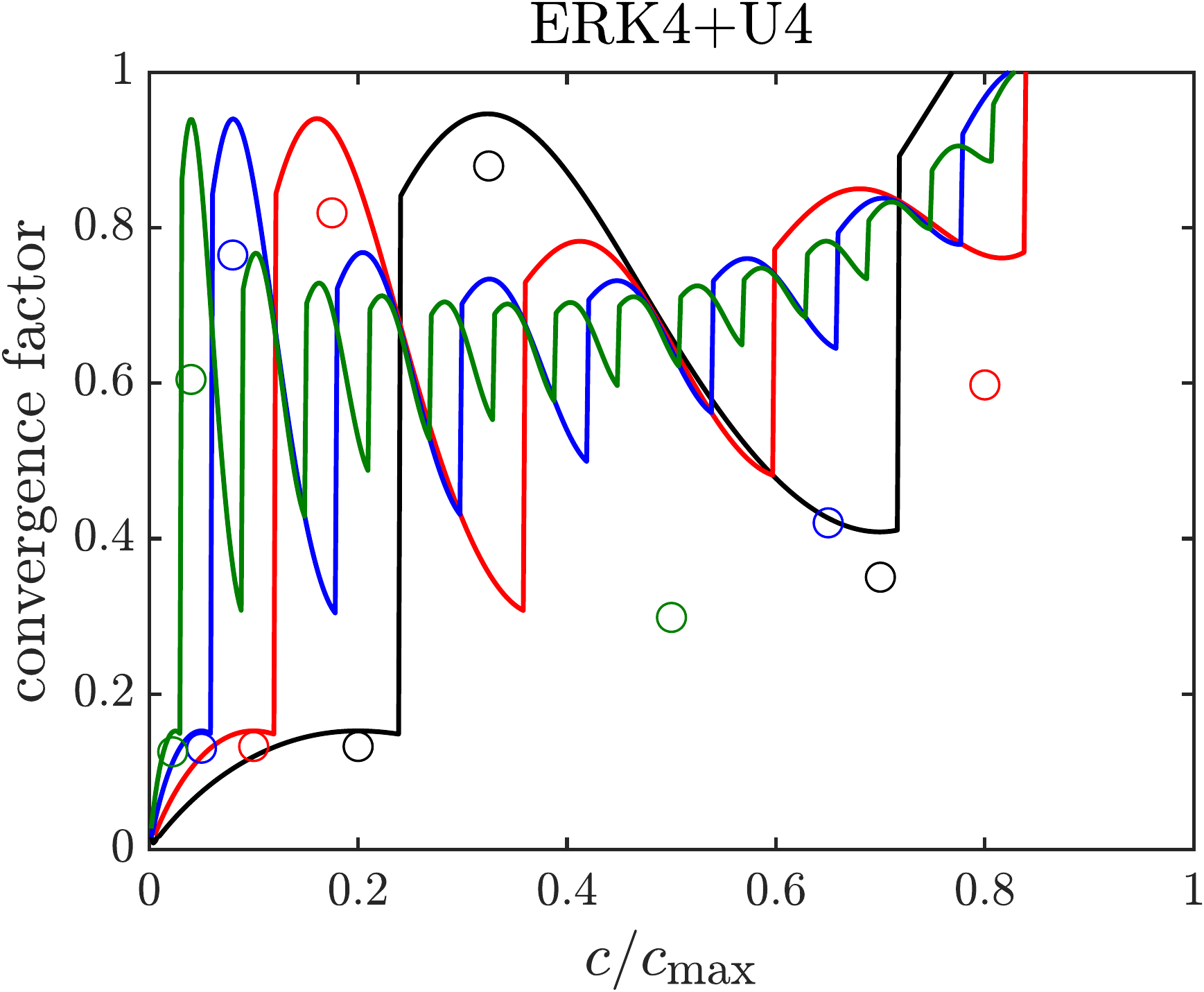}
\quad
\includegraphics[scale=0.32]{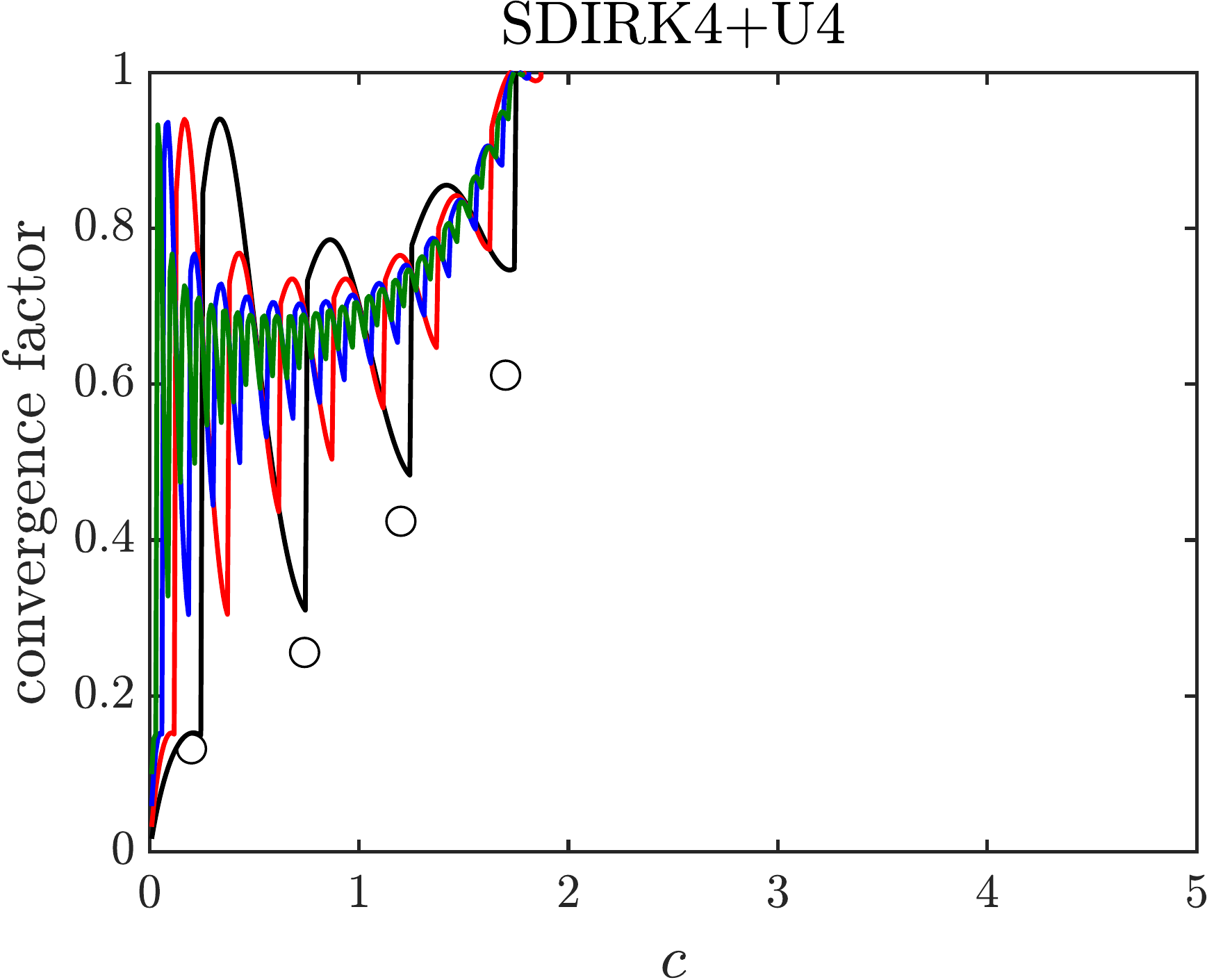}
}
\caption{
Dispersive discretizations:
Convergence factors for method-of-lines discretizations as a function of the fine-grid CFL number $c$ when using the modified semi-Lagrangian discretization \eqref{eq:MOL-SL-corrected} on the coarse grid. 
Left: Explicit discretizations. Right: Implicit discretizations.
MGRIT uses FCF-relaxation ($\nu = 1$), and a coarsening factor of $m$.
Solid lines are the two-level LFA convergence factor \eqref{eq:rho-asym}.
Circle markers are effective two-grid MGRIT convergence factors on the finite interval $t \in (0, T]$.
Note the different axis scales across the plots.
\label{fig:rho_vs_CFL-MOL-SL-correct-disp}
}
\end{figure}

In these tests, the finite-difference matrix ${\cal D}^{(p+1)}_s$ (see \eqref{eq:Dp+1_def}) in the coarse-grid operator \eqref{eq:MOL-SL-corrected} is taken as a first-order accurate discretization \ptxt{(i.e., $s = 1$)}, with a one-point bias to the left.
We use this choice because we find that it gives the best results overall; for example, using skew-symmetric, second-order-accurate discretizations instead lead to catastrophically worse two-level convergence factors that exceed unity at all CFL numbers considered in Fig.~\ref{fig:rho_vs_CFL-MOL-SL-correct-disp}.
%

% Note: To get the numbers I report below, I zoomed in on the MATLAB figure and used the Data Cursor, and not on the final version on the image.
%
Notice in Fig.~\ref{fig:rho_vs_CFL-MOL-SL-correct-disp} that there are discontinuities in the convergence factor as the CFL number varies, which often results in dramatic changes; for example, when $m = 2$, the two-level convergence factor of ERK4+U4 (bottom left panel of Fig.~\ref{fig:rho_vs_CFL-MOL-SL-correct-disp}) at $c/c_{\rm max} = 0.24^{-}$ is $\approx 0.15$, and at $c/c_{\rm max} = 0.24^{+}$ it is $\approx 0.84$.
These discontinuities arise because of discontinuous shifts in the stencil of the coarse-grid semi-Lagrangian discretization ${\cal S}_p^{(m \delta t)}$ as the coarse-grid CFL number $m c$ is varied.

\ptxt{In \cite{DeSterck_etal_2022}, for the case of dispersive fine-grid semi-Lagrangian discretizations (even $p$),} we similarly found that modified coarse-grid operators akin to \eqref{eq:MOL-SL-corrected} did not yield robust two-grid convergence, with a convergence factor larger than unity for most CFL numbers.
So, from this point of view, it is not surprising that the results in Fig.~\ref{fig:rho_vs_CFL-MOL-SL-correct-disp} are worse than in the dissipative cases shown in Fig.~\ref{fig:rho_vs_CFL-MOL-SL-correct-diss}.
\ptxt{For this reason, we do not consider multilevel tests, and we use a direct solver for the linear systems arising during the application of the coarse-grid operator \eqref{eq:MOL-SL-corrected} so as to show best-case MGRIT convergence.
Moreover, we do not consider parallel scaling studies as we did for dissipative discretizations.
}

% ---------------------------------------------------------------------------------------------- %
% ---------------------------------------------------------------------------------------------- %
%\clearpage
\section{Conclusions and future outlook}
\label{sec:conclusion}

\ptxt{
We have considered the MGRIT solution of constant-coefficient linear advection problems discretized with method-of-lines schemes that use upwind finite differences in space and Runge-Kutta methods in time.}
Within the parallel-in-time community, the efficient solution of advection-dominated problems is known to be notoriously difficult, even for such constant-coefficient advection problems.
For example, besides our previous work \cite{DeSterck_etal_2021}, which relied on non-practical computations, we know of no other multigrid-in-time methods that can achieve fast convergence for this class of problems.

In this paper, we have leveraged and built on our earlier works of \cite{DeSterck_etal_2022,DeSterck_etal_2022_LFA}.
Specifically, using the analysis framework we developed in \cite{DeSterck_etal_2022_LFA}, we have proved that directly discretized method-of-lines coarse-grid operators---standard MGRIT coarse-grid operators---fail to provide robust MGRIT convergence with respect to discretization and solver parameters due, at least in part, to their providing an inadequate coarse-grid correction of specific smooth Fourier modes known as characteristic components.
Based on this, we developed modified semi-Lagrangian coarse-grid operators similar to those we developed in \cite{DeSterck_etal_2022} for fine-grid semi-Lagrangian discretizations.
Specifically, the coarse-grid operators we develop here consist of first  applying a semi-Lagrangian discretization followed by a correction term that is carefully designed to alter the truncation error of the resultant operator so that it better approximates that of the ideal coarse-grid operator.
We have shown that these coarse-grid operators are effective for many method-of-lines discretizations of the linear advection problem.

\ptxt{
This work has considered finite-difference spatial discretizations; however, the approach should, in principle, be extendible to other spatial discretizations such as finite volume (FV) and discontinuous Galerkin (DG) methods.
This would be most feasible using (stable) semi-Lagrangian coarse-grid discretizations that use the same type of spatial discretization as the fine grid, and would require error estimates linking the fine- and coarse-grid discretizations.
See \cite{Huang_etal_2012} and \cite{Restelli_etal_2006} (and references therein) for FV- and DG-based semi-Lagrangian schemes, respectively.
}

Despite the progress here, further substantial developments are still required before these techniques can be used to speedup simulations of advection problems in real-world applications.
A first step is the extension of the coarse-grid operators developed here to variable-coefficient linear advection problems.
\ptxt{To this end, we are currently considering heuristically combining the principles developed in this paper and further approximation techniques including optimization. 
We are also exploring extensions to nonlinear hyperbolic PDEs through the use of linearization strategies.}

\appendix

% ---------------------------------------------------------------------------------------------- %
% ---------------------------------------------------------------------------------------------- %
\section{Proof of Lemma~\ref{lem:MOL-trunc}: Method-of-lines truncation error}
\label{app:MOL-trunc-proof}

\begin{proof}
The proof works by substituting the exact PDE solution into a single step of the method-of-lines scheme \eqref{eq:MOL_M_def}; in other words, we evaluate $\bm{u}(t_{n+1})
-
{\cal M}_{p, q}^{(\delta t)}
\bm{u}(t_n)$.
To do this, we require the exact PDE solution $\bm{u}(t)$, and also an expression relating $\bm{u}(t_n)$ to $\bm{u}(t_{n+1})$.

At any $(x,t)$, the exact solution of PDE \eqref{eq:ad} satisfies
\begin{align} \label{eq:u_ad_exp}
u(x, t) = \exp \left( - \alpha t \frac{\partial}{\partial x} \right) u(x, 0),
\end{align}
which can be seen by differentiating both sides of this expression with respect to $t$.
Using \eqref{eq:u_ad_exp} and considering $u(x, t_n) / u(x, t_{n+1})$, we find that
\begin{align} \label{eq:ad_sol:u_new_vs_u_old}
u(x, t_n) = \exp \left( \alpha \delta t \frac{\partial}{\partial x} \right) u(x, t_{n+1}).
\end{align}
Now we replace the spatial partial derivative on the right-hand side of this expression using the finite-difference approximation ${\cal L}_p$. Using the truncation error estimate of ${\cal L}_p$ from Lemma~\ref{lem:FD_trunc}, we may write
\begin{align} \label{eq:MOL_trunc_proof_exp_approx}
\begin{split}
&\left[
\exp \left( \alpha \delta t \frac{\partial}{\partial x} \right)
u(x, t_{n+1})
\right]
\bigg|_{x_i}
\\
&\quad= 
\left( \exp 
\left( 
\alpha \delta t 
\left[
\frac{{\cal L}_p}{h} +
\wh{e}_{\rm FD}  
\frac{{\cal D}^{(p+1)}_s}{h} + {\cal O}(h^{p+1})
\right]
\right) 
\bm{u}(t_{n+1}) \right)_i.
\end{split}
\end{align}
Using \eqref{eq:ad_sol:u_new_vs_u_old} and  \eqref{eq:MOL_trunc_proof_exp_approx}, we see that when the exact PDE solution is substituted into the method-of-lines update \eqref{eq:MOL_M_def} it produces the residual
\begin{align} \label{eq:MOL_trunc_proof_residual}
\begin{split}
&\bm{u}(t_{n+1})
-
{\cal M}_{p, q}^{(\delta t)}
\bm{u}(t_n)
\\
&\quad=
\left\{
I
-
{\cal M}_{p, q}^{(\delta t)} 
\exp 
\left( 
\alpha \delta t 
\left[
\frac{{\cal L}_p}{h} +
\wh{e}_{\rm FD}  
\frac{{\cal D}^{(p+1)}_s}{h} + {\cal O}(h^{p+1})
\right]
\right) 
\right\}
\bm{u}(t_{n+1}).
\end{split}
\end{align}
We now simplify the right-hand side of this equation.

First we simplify the method-of-lines time-stepping operator defined in \eqref{eq:MOL_M_def}, ${\cal M}_{p, q}^{(\delta t)} = R_q\left( 
- \frac{\alpha \delta t  {\cal L}_p}{h}
\right)$.
Using the expansion of $R_q$ given in \eqref{eq:e_RK_def}, for any sufficiently smooth vector $\bm{v} \in \mathbb{R}^{n_x}$, we have
\begin{align}
{\cal M}_{p, q}^{(\delta t)}  \bm{v}
&=
\exp 
\left( 
- \alpha \delta t  \frac{{\cal L}_p}{h}
\right)
\bm{v}
+
\eRK
\left(-\alpha \delta t \frac{{\cal L}_p}{h} \right)^{q+1}
\bm{v}
+
{\cal O} \Big( \delta t^{q+2} \Big),
\\
\label{eq:MOL_trunc_proof_Mpq}
&=
\exp 
\left( 
- \alpha \delta t  \frac{ {\cal L}_p}{h}
\right)
\bm{v}
+
\eRK
(- \alpha \delta t)^{q+1} \frac{ {\cal D}^{(q+1)}_s}{ h^{q+1} } 
\bm{v}
+
{\cal O} \Big( h^p \delta t^{q+1}, \delta t^{q+2} \Big).
\end{align}
Note that this second expression follows from the fact that $\big( \frac{{\cal L}_p}{h} \big)^{q+1} \bm{v}$ is a $p$th-order approximation to the $q+1$st derivative of $\bm{v}$, and, thus, we may write $\big( \frac{{\cal L}_p}{h} \big)^{q+1} \bm{v} = \frac{{\cal D}^{(q+1)}_s}{h^{q+1}} \bm{v} + {\cal O}(h)$ by recalling that the approximation $\frac{{\cal D}^{(q+1)}_s}{h^{q+1}}$ must be at least first-order accurate (i.e., $s \geq 1$ in \eqref{eq:Dp+1_def}).
Plugging \eqref{eq:MOL_trunc_proof_Mpq} into the right-hand side of \eqref{eq:MOL_trunc_proof_residual} and simplifying gives (omitting the $\bm{u}(t_{n+1})$)
\begingroup % This allows the equations to be split over multiple pages. There may be awkward spacing otherwise since TeX tries to put all the equations on the same page.
\allowdisplaybreaks
\begin{align}
&I
-
{\cal M}_{p, q}^{(\delta t)} 
\exp 
\left( 
\alpha \delta t 
\left[
\frac{{\cal L}_p}{h} +
\wh{e}_{\rm FD}  
\frac{{\cal D}^{(p+1)}_s}{h} + {\cal O}(h^{p+1})
\right]
\right)
\\ %%%%%%%%% Start the second equation here
\begin{split}
&=
I
-
\bigg[
\exp 
\left( 
- \alpha \delta t \frac{ {\cal L}_p}{h}
\right)
+
\eRK
(- \alpha \delta t)^{q+1} \frac{ {\cal D}^{(q+1)}_s}{ h^{q+1} } 
+
{\cal O} \Big( h \delta t^{q+1}, \delta t^{q+2} \Big)
\bigg]
\\ %%%%%%%%% Split very long equation here.
&\quad\quad
\times
\exp 
\left( 
\alpha \delta t 
\left[
\frac{{\cal L}_p}{h} +
\wh{e}_{\rm FD}  
\frac{{\cal D}^{(p+1)}_s}{h} + {\cal O}(h^{p+1})
\right]
\right), 
\end{split}
\\ %%%%%%%%%% Start third equation here
\begin{split}
&=
I
-
\exp 
\left( 
\alpha \delta t 
\left[
\wh{e}_{\rm FD}  
\frac{{\cal D}^{(p+1)}_s}{h} + {\cal O}(h^{p+1})
\right]
\right) 
\\ %%%%%%%%% Split very long equation here.
&\quad\quad
-
\left\{
\eRK
(- \alpha \delta t)^{q+1} \frac{ {\cal D}^{(q+1)}_s}{ h^{q+1} } 
+
{\cal O} \Big( h \delta t^{q+1}, \delta t^{q+2} \Big)
\right\}
\Big\{
I + {\cal O}(\delta t)
\Big\},
\end{split}
\\ %%%%%%%%%% Start 4th equation here
\label{eq:MOL_trunc_proof_residual_operator}
\begin{split}
&=
-
\frac{\alpha \delta t}{h} 
\wh{e}_{\rm FD} 
h^{p+1} 
\frac{{\cal D}^{(p+1)}_s}{h^{p+1}} 
\\ %%%%%%%%% Split very long equation here.
&\quad\quad
-
\eRK
\left(- \frac{\alpha \delta t}{h} \right)^{q+1} 
h^{q+1} 
\frac{ {\cal D}^{(q+1)}_s}{ h^{q+1} } 
+
{\cal O} \Big(h^{p+1} \delta t,  h \delta t^{q+1}, \delta t^{q+2} \Big).
\end{split}
\end{align}
\endgroup
Plugging \eqref{eq:MOL_trunc_proof_residual_operator} into the right-hand side of \eqref{eq:MOL_trunc_proof_residual} gives the claim \eqref{eq:Mpq_trunc}.
\qed
\end{proof}
%

% ---------------------------------------------------------------------------------------------- %
% ---------------------------------------------------------------------------------------------- %
\section{Proof of Lemma~\ref{lem:Psi_ideal-MOL-pert}: Ideal coarse-grid operator perturbation}
\label{app:Psi_ideal_MOL-pert-proof}
\begin{proof}
We begin by developing an estimate for the action of the ideal coarse-grid operator.
Rearranging the method-of-lines truncation error result \eqref{eq:Mpq_trunc} from Lemma~\ref{lem:MOL-trunc} gives
\begin{align} \label{eq:Mpp_trunc-rearranged}
\begin{aligned}
{\cal M}^{(\delta t)}_{p, p} \bm{u}(t_n) 
&=
{\cal O} \Big(
h^{p+1} \delta t,
h \delta t^{p+1}, 
\delta t^{p+2} 
\Big)
+
\left(
I 
+ 
\left[
c  
\mkern 1mu % This inserts a very small space.
\eFD
+
(-c)^{p+1} \eRK
\right] 
{\cal D}^{(p+1)}_s
\right)
\bm{u}(t_{n+1}).
\end{aligned}
\end{align}
Now multiply both sides of this expression by ${\cal M}^{(\delta t)}_{p, p}$.
To simplify the result, first use the fact that ${\cal M}^{(\delta t)}_{p, p}$ commutes with ${\cal D}^{(p+1)}_s$ (they are both circulant matrices), and then use \eqref{eq:Mpp_trunc-rearranged} to estimate ${\cal M}^{(\delta t)}_{p, p} \bm{u}(t_{n+1})$. 
This gives
\begingroup % This allows the equations to be split over multiple pages. There may be awkward spacing otherwise since TeX tries to put all the equations on the same page.
\allowdisplaybreaks
\begin{align} 
&{\cal M}^{(\delta t)}_{p, p} {\cal M}^{(\delta t)}_{p, p} \bm{u}(t_n) 
=
\left(
I 
+ 
\left[
c  
\mkern 1mu % This inserts a very small space.
\eFD
+
(-c)^{p+1} \eRK
\right] 
{\cal D}^{(p+1)}_s
\right)
{\cal M}^{(\delta t)}_{p, p}
\bm{u}(t_{n+1})
\\ \notag % Split single equation here.
&\hspace{12em} % a \quad is 1em
+
{\cal O} \Big(
h^{p+1} \delta t,
h \delta t^{p+1}, 
\delta t^{p+2} 
\Big),
\\ % Start new equation here.
&\quad=
\left(
I 
+ 
\left[
c  
\mkern 1mu % This inserts a very small space.
\eFD
+
(-c)^{p+1} \eRK
\right] 
{\cal D}^{(p+1)}_s
\right)^2
\bm{u}(t_{n+2})
+
{\cal O} \Big(
h^{p+1} \delta t,
h \delta t^{p+1}, 
\delta t^{p+2} 
\Big),
\\ % Start new equation here.
\label{eq:Psi_ideal-MOL-pert-aux}
&\quad=
\left(
I 
+
2 
\left[
c  
\mkern 1mu % This inserts a very small space.
\eFD
+
(-c)^{p+1} \eRK
\right] 
{\cal D}^{(p+1)}_s
\right)
\bm{u}(t_{n+2})
+
{\cal O} \Big(
h^{2(p+1)},
h^{p+1} \delta t,
h \delta t^{p+1}, 
\delta t^{p+2} 
\Big).
\end{align}
\endgroup
Note that \eqref{eq:Psi_ideal-MOL-pert-aux} follows by discarding the highest-order term when expanding the square, since \\${\cal D}^{(p+1)}_s {\cal D}^{(p+1)}_s \bm{u}(t_{n+2}) = {\cal O}(h^{2(p+1)})$.
Applying ${\cal M}^{(\delta t)}_{p, p}$ to both sides of \eqref{eq:Psi_ideal-MOL-pert-aux} a further $m - 2$ times, each time applying the above arguments, by induction we arrive at the following estimate for the action of the ideal coarse-grid operator
\begin{align} \label{eq:Psi_ideal-MOL-pert-Psi_ideal_est}
\left[ \prod \limits_{k = 0}^{m-1} {\cal M}^{(\delta t)}_{p, p} \right] 
\bm{u}(t_n) 
&=
\Psi_{\rm ideal} 
\bm{u}(t_n) 
=
{\cal O} \Big(
h^{2(p+1)},
h^{p+1} \delta t,
h \delta t^{p+1}, 
\delta t^{p+2} 
\Big)
\\ \notag % Split single equation here.
&\hspace{-2em} % a \quad is 1em
+
\left(
I 
+
m
\left[
c  
\mkern 1mu % This inserts a very small space.
\eFD
+
(-c)^{p+1} \eRK
\right] 
{\cal D}^{(p+1)}_s
\right)
\bm{u}(t_{n+m}).
\end{align}

Noticing the $\bm{u}(t_{n+m})$ on the right-hand side of \eqref{eq:Psi_ideal-MOL-pert-Psi_ideal_est}, we seek to rewrite this in terms of the action of the  semi-Lagrangian operator, ${\cal S}_{p}^{(m \delta t)}$.
To this end, rewriting the semi-Lagrangian truncation error estimate \eqref{eq:SL_trunc} from Lemma~\ref{lem:SL_trunc} we have
\begin{align} \label{eq:SL_trunc-rewritten}
\left(
I - (-h)^{p+1} 
f_{p+1} \big(\varepsilon^{(\delta t)} \big) 
{\cal D}^{(p+1)}_s
\right)
\bm{u}(t_{n+m})
=
{\cal S}_{p}^{(\delta t)} \bm{u}(t_{n})
+ {\cal O}(h^{p+2}).
\end{align}
Rearranging for $\bm{u}(t_{n+m})$, employing a geometric expansion valid for small $h$, and then using the fact that $\bm{u}(t_{n})$ is smooth, we have
\begin{align} 
\bm{u}(t_{n+m})
&=
\left(
I - (-h)^{p+1} 
f_{p+1} \big(\varepsilon^{(\delta t)} \big) 
{\cal D}^{(p+1)}_s
\right)^{-1}
{\cal S}_{p}^{(\delta t)} \bm{u}(t_{n})
+ {\cal O}(h^{p+2}),
\\
&=
\Big(
I 
+ 
(-h)^{p+1} 
f_{p+1} \big(\varepsilon^{(\delta t)} \big) 
{\cal D}^{(p+1)}_s
+
\big[(-h)^{p+1} 
f_{p+1} \big(\varepsilon^{(\delta t)} \big) 
{\cal D}^{(p+1)}_s
\big]^2
\\
\notag
&\quad\quad\quad
+
{\cal O} \Big( \big[(-h)^{p+1} 
f_{p+1} \big(\varepsilon^{(\delta t)} \big) 
{\cal D}^{(p+1)}_s
\big]^3 \Big)
\Big)
{\cal S}_{p}^{(\delta t)} \bm{u}(t_{n})
+ {\cal O}(h^{p+2}),
\\
\label{eq:SL_trunc-expansion}
&=
\left(
I + (-h)^{p+1} 
f_{p+1} \big(\varepsilon^{(\delta t)} \big) 
{\cal D}^{(p+1)}_s
\right)
{\cal S}_{p}^{(\delta t)} \bm{u}(t_{n})
+ {\cal O}(h^{p+2}).
\end{align}

Next, plug the expression for $\bm{u}(t_{n+m})$ given by \eqref{eq:SL_trunc-expansion} into the right-hand side of \eqref{eq:Psi_ideal-MOL-pert-Psi_ideal_est} to give
\begin{align}
&\Psi_{\rm ideal} 
\bm{u}(t_n) 
=
\left(
I 
+
m
\left[
c  
\mkern 1mu % This inserts a very small space.
\eFD
+
(-c)^{p+1} \eRK
\right] 
{\cal D}^{(p+1)}_s
\right)
\\ \notag % Split single equation here.
&\hspace{1em} % a \quad is 1em
\times \left(
I + (-h)^{p+1} 
f_{p+1} \big(\varepsilon^{(\delta t)} \big) 
{\cal D}^{(p+1)}_s
\right)
{\cal S}_{p}^{(\delta t)} \bm{u}(t_{n})
+
{\cal O} \Big(
h^{p+2},
h^{p+1} \delta t,
h \delta t^{p+1}, 
\delta t^{p+2} 
\Big),
\\ % Start new equation here.
\label{eq:Psi_ideal-MOL-pert-aux2}
&=
\left(
I 
+
\left[
m c  
\mkern 1mu % This inserts a very small space.
\eFD
+
m (-c)^{p+1} \eRK
+
(-h)^{p+1} 
f_{p+1} \big(\varepsilon^{(\delta t)} \big) 
\right] 
{\cal D}^{(p+1)}_s
\right)
{\cal S}_{p}^{(\delta t)} \bm{u}(t_{n})
\\ \notag % Split single equation here.
&\hspace{0em} % a \quad is 1em
+
{\cal O} \Big(
h^{p+2},
h^{p+1} \delta t,
h \delta t^{p+1}, 
\delta t^{p+2} 
\Big).
\end{align}
This proves the first claim \eqref{eq:Psi_ideal-SL_redisc-pert0} of the lemma, noting that the constant multiplying ${\cal D}^{(p+1)}_s$ in \eqref{eq:Psi_ideal-MOL-pert-aux2} is indeed $\varphi_{p+1}^{(m \delta t)}$ given in \eqref{eq:varphi-constant-MOL-SL}.

The second claim \eqref{eq:Psi_ideal-SL_redisc-pert} of the lemma follows from \eqref{eq:Psi_ideal-SL_redisc-pert0} by a geometric expansion akin to that used in \eqref{eq:SL_trunc-expansion}, 
$ \big( I - \varphi^{(m \delta t)}_{p+1} {\cal D}^{(p+1)}_s \big)^{-1} {\cal S}_p^{(m \delta t)} \bm{u}(t_n) 
= 
\big( I + \varphi^{(m \delta t)}_{p+1} {\cal D}^{(p+1)}_s \big) {\cal S}_p^{(m \delta t)} \bm{u}(t_n)
+
{\cal O}(h^{2(p+1)})
$.
\qed
\end{proof}

\begin{acknowledgements}
We are grateful for feedback from anonymous referees which helped improve the quality of this manuscript.
\end{acknowledgements}

% BibTeX users please use one of
%\bibliographystyle{spbasic}      % basic style, author-year citations
\bibliographystyle{spmpsci}      % mathematics and physical sciences
%\bibliographystyle{spphys}       % APS-like style for physics
%\bibliography{}   % name your BibTeX data base
\bibliography{pit-advection-MOL-generalization-R1}

% From the submission guidelines: 
% The following statements must be included in your submitted manuscript under the heading 'Statements and Declarations'. This should be placed after the References section. Please note that submissions that do not include required statements will be returned as incomplete.
\section*{Statements \& Declarations}

\subsection*{Funding}
This work was performed under the auspices of the U.S. Department of Energy by Lawrence Livermore National Laboratory under Contract DE-AC52-07NA27344 (LLNL-JRNL-839789).
This work was supported in part by the U.S. Department of Energy, Office of Science, Office of Advanced Scientific Computing Research, Applied Mathematics program, and by NSERC of Canada.

% Authors must disclose all relationships or interests that 
% could have direct or potential influence or impart bias on 
% the work: 
%
\subsection*{Competing Interests}
The authors declare that they have no conflict of interest.

\subsection*{Author Contributions}

\subsection*{Data Availability}
Data sharing not applicable to this article as no datasets were generated on analyzed during the current study.

\end{document}